\newtheorem{thm}{Theorem}[section]
\newtheorem{lem}{Lemma}[section]
\theoremstyle{definition}
\theoremstyle{remark}
\numberwithin{equation}{section}
\numberwithin{equation}{section}
\newcounter{saveeqn}
\newcommand{\eqnref}[1]{(\ref {#1})}
\newcommand{\Bf}{\mathbf{f}}
\newcommand{\Bh}{\mathbf{h}}
\newcommand{\Bz}{\mathbf{z}}
\newcommand{\bH}{\mathbf{H}}
\newcommand{\Bx}{\mathbf{x}}
\newcommand{\By}{\mathbf{y}}
\newcommand{\bZ}{\mathbf{z}}
\newcommand{\BB}{\mathbf{B}}
\newcommand{\BN}{\mathbf{N}}
\newcommand{\Gs}{\sigma}
\newcommand{\tdx}{\tilde{\Bx}}
\newcommand{\tdy}{\tilde{\By}}
\newcommand{\Acal}{\mathcal{A}}
\newcommand{\Kcal}{\mathcal{K}}
\newcommand{\Lcal}{\mathcal{L}}
\newcommand{\Scal}{\mathcal{S}}
\newcommand{\Mcal}{\mathcal{M}}
\newcommand{\Ncal}{\mathcal{N}}
\newcommand{\Ocal}{\mathcal{O}}
\newcommand{\Pcal}{\mathcal{P}}
\newcommand{\Qcal}{\mathcal{Q}}
\newcommand{\ds}{\displaystyle}
\newcommand{\la}{\langle}
\newcommand{\ra}{\rangle}
\newcommand{\RR}{\mathbb{R}}
\newcommand{\p}{\partial}
\newcommand{\beq}{\begin{equation}}
\newcommand{\eeq}{\end{equation}}
\DeclareMathAlphabet{\itbf}{OML}{cmm}{b}{it}
\def\bu{{{\mathbf{u}}}}
\title[Identifying magnetized anomalies in MHD model]{On identifying magnetized anomalies using geomagnetic monitoring II. A Magnetohydrodynamic Model}
\author{Youjun Deng}
\address{School of Mathematics and Statistics, Central South University, Changsha, Hunan, China.}
\email{youjundeng@csu.edu.cn, dengyijun\_001@163.com}
\author{Jinhong Li}
\address{School of Science, Qilu University of Technology (Shandong Academy of Sciences), Jinan, Shandong, China}
\email{lijinhong@qlu.edu.cn}
\author{Hongyu Liu}
\address{Department of Mathematics, Hong Kong Baptist University, Kowloon, Hong Kong SAR, China}
\email{hongyu.liuip@gmail.com, hongyuliu@hkbu.edu.hk}
\date{} 
\begin{document}
\maketitle

\begin{abstract}

This paper is a continuation and an extension of our recent work \cite{DLL:18} on the identification of magnetized anomalies using geomagnetic monitoring, which aims to establish a rigorous mathematical theory for the geomagnetic detection technology. Suppose a collection of magnetized anomalies is presented in the shell of the Earth. By monitoring the variation of the magnetic field of the Earth due to the presence of the anomalies, we establish sufficient conditions for the unique recovery of those unknown anomalies. In \cite{DLL:18}, the geomagnetic model was described by a linear Maxwell system. In this paper, we consider a much more sophisticated and complicated magnetohydrodynamic model, which stems from the widely accepted dynamo theory of geomagnetics.


\medskip

\noindent{\bf Keywords:}~~ Magnetohydrodynamics, geomagnetic monitoring, magnetic anomaly detection, unique recovery

\noindent{\bf 2010 Mathematics Subject Classification:}~~35Q60, 35J05, 31B10, 35R30, 78A40

\end{abstract}

\section{Introduction}

\subsection{Background on the dynamo theory of geomagnetics}

The dynamo theory proposes a mechanism such that a rotating, convecting and electrically conducting fluid can maintain the magnetic field of a celestial body. Earth's magnetic field has been interpreted by the dynamo theory through a magnetohydrodynamic (MHD) model \cite{ZZZ11,BPCo96,CZLSZ03,Hol,Lab,Rob,Weiss,Sak,ZSc00,Wik2}. Following the discussion in \cite{CZZ06,CZLSZ03,HeZo18,ZZZ11,ZSc00}, we briefly introduce the MHD system for our study. It is widely accepted that the Earth is of a core-shell structure. The dynamo mechanism states that the convecting currents of fluid metal in the Earth's outer core, driven by heat flow from the outer core, organized into rolls by the Coriolis force, create circulating electric currents, which generate the magnetic field. To describe this physical process, there are three basic partial differential equations including the Navier-Stokes equations, the heat equation and the Maxwell equations, and they are coupled together to form the MHD system.

Let $\Sigma_i, \Sigma_o$ and $\Sigma_s$ be bounded simply-connected $C^2$ domains in $\mathbb{R}^3$, respectively, signifying the inner core, outer core and shell of the Earth. $\Sigma_c:=\Sigma_o\cup\overline{\Sigma_i}$ and $\Sigma:=\Sigma_s\cup\overline{\Sigma_c}$, are also simply-connected $C^2$ domains, which denote, respectively, the Earth's core and the Earth. $\mathbb{R}^3\backslash\overline{\Sigma}$ is the outer space of the Earth. Let $\mathbf{u}(\Bx, t)$ and $p(\Bx, t)$, $(\Bx, t)\in\Sigma_o\times\mathbb{R}_+$, respectively denote the velocity field and pressure of the fluid metal in the outer core. Throughout, $\Bx\in\mathbb{R}^3$ represents the position vector and $t$ represents the time variable. Let $\Theta(\Bx, t)$, $(\Bx, t)\in\Sigma_o\times\mathbb{R}_+$, signify the deviation of the temperature from its static distribution of the Earth's outer core. Finally, we let $\BB(\Bx, t)$ and $\mathcal{H}(\Bx, t)$, $(\Bx, t)\in\mathbb{R}^3\backslash\overline{\Sigma_i}\times\mathbb{R}_+$ respectively, be the induced magnetic field and the intensity of the magnetic field. There holds $\BB(\Bx, t)=\mu(\Bx)\mathcal{H}(\Bx, t)$, $\Bx\in\mathbb{R}^3\backslash\overline{\Sigma_i}$, where $\mu(\Bx)$ is the magnetic permeability of the medium.

The motion of the fluid metal in the outer core is described by the following Navier-Stokes equations
\begin{equation}\label{eq:geom1}
\dot{\bu}+\bu\cdot\nabla \bu-\eta\Delta \bu+2\widetilde{\omega}\times \bu+\frac{1}{\varrho}\nabla p=\gamma \alpha \Theta \Bx+\frac{1}{\varrho\mu}(\nabla\times \BB)\times \BB,\ \ (\Bx, t)\in\Sigma_o\times\mathbb{R}_+,
\end{equation}
where and also in what follows, the overdot signifies the partial derivative with respect to the time variable $t$. Furthermore, the incompressibility of the fluid yields that
\begin{equation}\label{eq:geom4}
\nabla\cdot \bu(\Bx, t)=0,\quad (\Bx, t)\in\Sigma_o\times\mathbb{R}_+.
\end{equation}
The temperature distribution $\Theta(\Bx, t)$ satisfies the following heat equation
\begin{equation}\label{eq:geom3}
\dot{\Theta}+\bu\cdot\nabla \Theta=\kappa\Delta\Theta+\beta \bu\cdot \Bx,\quad (\Bx, t)\in\Sigma_o\times\mathbb{R}_+.
\end{equation}
The physical meaning of the coefficient parameters in \eqref{eq:geom1} and \eqref{eq:geom3} are given as follows. $\varrho$ is the density, $\kappa$ is the thermal diffusivity, $\alpha$ is the thermal expansion coefficient, $\eta$ is the kinematic viscosity and $\lambda$ is the magnetic diffusivity. There holds,
\begin{equation}\label{eq:ddd1}
\lambda=\frac{1}{\mu \sigma},
\end{equation}
where $\mu$ and $\Gs$ are respectively the magnetic permeability and electric conductivity. $\widetilde{\omega}$ signifies the uniform angular velocity of the rotation of the Earth's core. $\gamma$ is a positive constant which fulfils $\mathbf{g}=-\gamma \Bx$, where $\mathbf{g}$ is the gravitational field of the Earth. The parameter $\beta$ is a positive constant representing the uniformly distributed heat source. To complete the description, one needs to impose suitable boundary and initial conditions for the two partial differential equations \eqref{eq:geom1} and \eqref{eq:geom3}. There are two types of widely used boundary conditions. In this paper, we assume that the inner- and outer-bounding surfaces are nonslip and impenetrable, namely,
\beq\label{eq:geobndcon01}
\bu(\Bx,t)=0, \quad (\Bx, t)\in \, \p \Sigma_o\times \RR_+.\medskip
\eeq
One may also assume that the velocity on the boundary, $\p \Sigma_o$, is stress-free and impenetrable. The temperature boundary condition usually does not play a major role, and one may assume that the heat does not convect in the region outside the outer core, i.e.,
\beq\label{eq:geobndcon02}
\frac{\p \Theta(\Bx, t)}{\p \nu}=0. \quad (\Bx, t)\in \, \p \Sigma_o\times \RR_+,
\eeq
where $\nu$ denotes the exterior unit normal vector to $\partial\Sigma_o$. The initial conditions are prescribed as follows,
 \beq\label{eq:pssapp02}
\bu(\Bx, 0)= \Bf(\Bx), \quad \Theta(\Bx, 0)= T(\Bx), \quad \Bx\in\Sigma_o.
\eeq
Here, we would like to emphasize that the boundary and initial conditions shall not play an essential role in our study of the inverse problem on the geomagnetic detection as long as the corresponding forward problem is well-posed. We shall further remark this point in our subsequent discussion.

The induced magnetic field is governed by the following exterior problem associated with a Maxwell system,
\begin{equation}\label{eq:pssapp}
\begin{cases}
\dot{ \mathcal{H}}(\Bx,t)=\nabla\times(\bu(\Bx,t)\times \mathcal{H}(\Bx,t))-\lambda_o(\Bx)\nabla\times\nabla\times \mathcal{H}(\Bx,t),&\hspace*{-.5cm}(\Bx,t)\in\, \Sigma_o\times \RR_+, \medskip \\
\dot{\mathcal{H}}(\Bx,t)=-\lambda_s(\Bx)\nabla\times\nabla\times \mathcal{H}(\Bx,t), &\hspace*{-.5cm}(\Bx,t)\in\, \Sigma_s\times \RR_+, \medskip \\
\nabla\times\nabla\times \mathcal{H}(\Bx,t)=0,   &\hspace*{-.5cm}(\Bx,t)\in\, (\RR^3\setminus{\overline{\Sigma}})\times \RR_+,\medskip \\
\nabla\cdot (\mu(\Bx)\mathcal{H}(\Bx,t))=0, \ (\Bx,t)\in\, (\RR^3\backslash\overline{\Sigma_i})\times \RR_+; \ \mathcal{H}(\Bx, 0)= \Bh(\Bx)\chi(\Sigma_o), & \hspace*{-.5cm}\Bx\in \RR^3\setminus{\overline{\Sigma_i}},\medskip\\
\nu\times\mathcal{H}(\Bx, t)=0,\ (\Bx, t)\in \partial\Sigma_i\times\mathbb{R}_+; \ \ \mathcal{H}(\Bx,t)=\Ocal(\|\Bx\|^{-2})\ \mbox{as}\ \ \|\Bx\|\rightarrow \infty,
\end{cases}
\end{equation}
where $\lambda_o$ and $\lambda_s$ are respectively the magnetic diffusivities in the outer core and shell of the Earth. In \eqref{eq:pssapp}, we impose a perfect magnetic conducting (PMC) condition on the inner boundary $\Sigma_i$. As emphasized earlier, this boundary condition shall not play an essential role in our subsequent inverse problem study as long as the MHD system \eqref{eq:geom1}--\eqref{eq:pssapp} is well-posed in the sense that shall be prescribed later. It can be replaced by any other suitable boundary condition depending on the availability of the physical property of the inner core. In what follows, for expositional convenience, we refer to $\mathcal{H}$ as the magnetic field.

\subsection{Mathematical formulation of the geomagnetic detection}

We present the mathematical formulation of the geomagnetic detection.  Let $\mu$ and $\sigma$ be real-valued $L^\infty$ functions in $\mathbb{R}^3\backslash\overline{\Sigma_i}$, such that $\mu$ is positive and $\sigma$ is nonnegative.
Let $\mu_0$ denote the permeability of the uniformly homogeneous outer space $\mathbb{R}^3\backslash\overline{\Sigma}$. The material distribution in $\mathbb{R}^3\backslash\overline{\Sigma_i}$ is described by
\begin{equation}\label{eq:gome5}
\Gs(\Bx)=\Gs_c(\Bx)\chi(\Sigma_o)+\Gs_s(\Bx)\chi(\Sigma_s),\ \ \quad\mu(\Bx)=(\mu_c(\Bx)-\mu_0)\chi(\Sigma_o)+\mu_0, \ \ \Bx\in \mathbb{R}^3\backslash\overline{\Sigma_i},
\end{equation}
where and also in what follows, $\chi$ denotes the characteristic function. By \eqref{eq:gome5}, we know that the mediums in the outer core and the shell of the Earth are respectively characterized by $(\Sigma_o; \mu_c,\sigma_c)$ and $(\Sigma_s; \mu_0,\sigma_s)$.
Define by $\lambda$ the magnetic diffusivity in the Earth's interior, then it satisfies
\beq\label{eq:gome0101}
\lambda(\Bx)=\lambda_c(\Bx)\chi(\Sigma_o)+\lambda_s(\Bx)\chi(\Sigma_s),
\eeq
where $\lambda_c(\Bx)=\frac{1}{\mu_c\sigma_c}$ and $\lambda_s(\Bx)=\frac{1}{\mu_0\sigma_s}$. Henceforth, we let $\mathbf{u}_0$, $p_0$, $\Theta_0$, and $\mathcal{H}_0$, respectively, denote the velocity field, pressure function, temperature distribution and magnetic field of the MHD system \eqref{eq:geom1}--\eqref{eq:pssapp} associated with the medium configuration in \eqref{eq:gome5} and \eqref{eq:gome0101}.

Next we suppose that a collection of magnetized anomalies presented in the shell of the Earth. Let $D_l$, $l=1,2,\ldots, l_0$, denote the magnetized anomalies, where $D_l$, $1\leq l\leq l_0$ are simply-connected $C^2$ domains such that
the corresponding material parameters are given by $\mu_l$ and $\sigma_l$. It is assumed that $\mu_l$ and $\sigma_l$ are all positive constants with $\mu_l\neq \mu_0$, $1\leq l\leq l_0$. With the presence of the magnetized anomalies $(D_l; \mu_l, \sigma_l)$, $l=1,2,\ldots,l_0$, in the shell of the Earth, the medium configuration in $\mathbb{R}^3\backslash\overline{\Sigma_i}$ is then described by
\beq\label{eq:paradefsigma}
\begin{split}
\tilde\Gs(\Bx)=& \Gs_c(\Bx)\chi(\Sigma_o)+\Gs_s(\Bx)\chi(\Sigma_s)+\sum_{l=1}^{l_0} \sigma_l\chi(D_l),\\
\quad\tilde\mu(\Bx)=& (\mu_c(\Bx)-\mu_0)\chi(\Sigma_o)+\sum_{l=1}^{l_0} (\mu_l-\mu_0)\chi(D_l)+\mu_0,\\
\end{split}
\eeq
Accordingly, the magnetic diffusivity $\tilde\lambda$ is given by
\begin{equation}\label{eq:mmm1}
\tilde\lambda(\Bx)=\lambda_c(\Bx)\chi(\Sigma_o)+\lambda_s(\Bx)\chi(\Sigma_s\setminus{\overline{\bigcup_{l=1}^{l_0}D_l}})+\sum_{l=1}^{l_0}\lambda_l(x)\chi(D_l),
\end{equation}
where $\lambda_l(\Bx)=\frac{1}{\mu_l\sigma_l}$, $l=1, 2, \ldots, l_0.$ Let $\widetilde{\mathbf{u}}$, $\widetilde{p}$, $\widetilde{\Theta}$, and $\widetilde{\mathcal{H}}$, respectively, denote the velocity field, pressure function, temperature distribution and magnetic field of the MHD system \eqref{eq:geom1}--\eqref{eq:pssapp} associated with the medium configuration in \eqref{eq:paradefsigma} and \eqref{eq:mmm1}.

Let $\widetilde{\Gamma}$ be a simply-connected and analytic surface which encloses $\Sigma$, and let $\Gamma$ be an open patch of $\widetilde{\Gamma}$.
In the current article, we are mainly concerned with the following inverse problem,
\begin{equation}\label{eq:geom6}
\left(\widetilde{\mathcal{H}}(\Bx, t)-\mathcal{H}_0(\Bx, t) \right)\bigg|_{(\Bx,t)\in\Gamma\times\mathbb{R}_+}\longrightarrow \bigcup_{l=1}^{l_0} (D_l;\mu_l,\sigma_l).
\end{equation}
That is, we make use the variation of the geomagnetic field due to the presence of the magnetized anomalies in the shell of the Earth to recover the locations as well as the material parameters of the anomalies. For simplicity, one may take $\Gamma$ to be an open patch of $\partial B_R$ with $B_R$ a sufficiently large central ball containing $\Sigma$. Several remarks are in order regarding the magnetic anomaly detection problem described above.

First, from a practical point of view, the geomagnetic configuration in the core of the Earth, namely the fluid metal, should not be assumed to be a-priori known. In fact, the movement of the fluid metal in the outer core is to generate the source input for the Maxwell system \eqref{eq:pssapp}. On the other hand, the measurement data $\mathcal{H}_0$ clearly encode the information of this generated source. This observation is critical for our subsequent unique recovery study of the geomagnetic detection. Second, the time interval for the measurement in \eqref{eq:geom6} is $\mathbb{R}_+$. However, it can actually be replaced by a finite time-interval $[0, T_0]$ such that after $T_0$, the interrupted geomagnetic field due to the presence of the anomalies leaves away from the Earth. This is clearly a physically reasonable assumption. Third, the measurement surface $\Gamma$ is in a scale much smaller than the Earth, and we are mainly concerned with the region under $\Gamma$ such that the geomagnetic effect can reach $\Gamma$. Hence, it is unobjectionable to assume that the medium configuration in the rest part of the Earth is the same as that in the region under $\Gamma$. One may consider the submarine detection using the geomagnetic monitoring and it can be assumed that the magnetic diffusivity $\lambda_s$ is the one for the sea. Throughout the rest of the paper, we assume that $\lambda_s$ is a constant and it is known a-priori.
Finally, it is noted that the anomalies are also in a size much smaller than that of the Earth.

According to our discussion above, it is sufficient for us to consider the inverse problem \eqref{eq:geom6} associated with the magnetohydrodynamic Maxwell system \eqref{eq:pssapp} in $\mathbb{R}^3\backslash\overline{\Sigma_c}$, with the implicit use that the magnetic fields are generated by the outer core of the Earth through the MHD system \eqref{eq:geom1}--\eqref{eq:pssapp}. The well-posedness of the complicated nonlinear MHD system \eqref{eq:geom1}--\eqref{eq:pssapp} is a challenging problem and a large amount literature is devoted to this topic. We shall not explore this topic in the current article and our focus is the geomagnetic detection problem. Nevertheless, we need the following technical assumption for our study on the magnetic fields $\mathcal{H}_0$ and $\widetilde{\mathcal{H}}$ in $\mathbb{R}^3\backslash\overline{\Sigma_c}$. Introduce the following temporal Fourier transform for $(\Bx, \omega)\in(\mathbb{R}^3\backslash\overline{\Sigma_c})\times\mathbb{R}_+$,
\begin{equation}\label{eq:cond1}
\mathbf{J}(\Bx,\omega)=\mathcal{F}_t(\mathcal{J}):=\frac{1}{2\pi}\int_0^\infty \mathcal{J}(\Bx, t) e^{\mathrm{i}\omega t}\ dt, \quad\mathcal{J}=\mathcal{H}_0\ \ \mbox{or}\ \ \widetilde{\mathcal{H}}.
\end{equation}
Throughout, we assume that the Fourier transform \eqref{eq:cond1} exists for $(\Bx, \omega)\in(\mathbb{R}^3\backslash\overline{\Sigma_c})\times (0,\omega_0)$ with $\omega_0\in\mathbb{R}_+$ such that $\mathbf{J}(\Bx, \omega)\in H_{loc}(\mathrm{curl},\mathbb{R}^3\backslash\overline{\Sigma_c})$. Here and also in what follows,
\[
H_{loc}(\mathrm{curl}, X):=\{U|_\Omega\in H(\mathrm{curl}, \Omega)|\ \Omega\ \ \mbox{is any bounded subdomain of $X$}\},
\]
and
\[
H(\mathrm{curl}, \Omega)=\{U\in (L^2(\Omega))^3|\ \nabla\times U\in (L^2(\Omega))^3\}.
\]
Set
\begin{equation}\label{eq:ft2}
\mathbf{H}_0=\mathcal{F}_t(\mathcal{H}_0),\ \mathbf{H}=\mathcal{F}_t(\widetilde{\mathcal{H}}).
\end{equation}
From the time-dependent system \eqref{eq:pssapp} as well as the description of the medium configurations in \eqref{eq:gome5}--\eqref{eq:mmm1} associated with $\mathcal{H}_0$ and $\widetilde{\mathcal{H}}$, respectively, one has by direct verifications the following two systems in the frequency-domain, respectively, for $\mathbf{H}_0\in H_{loc}(\mathrm{curl},\mathbb{R}^3\backslash\overline{\Sigma_c})$ and $\mathbf{H}\in H_{loc}(\mathrm{curl},\mathbb{R}^3\backslash\overline{\Sigma_c})$:
\begin{equation}\label{eq:frequen01}
\begin{cases}
\nabla\times\nabla\times \mathbf{H}_0(\Bx)-i\omega\lambda_s^{-1}\mathbf{H}_0(\Bx)=0, &\Bx\in\, \Sigma_s,\medskip \\
\nabla\times\nabla\times \mathbf{H}_0(\Bx)=0,   &\Bx\in\, \RR^3\setminus{\overline{\Sigma}},\medskip \\
\nabla\cdot (\mu(\Bx)\mathbf{H}_0(\Bx))=0, &\Bx\in\, \RR^3\setminus\overline{\Sigma_c},\medskip \\
\mathbf{H}_0=\Ocal(\|\Bx\|^{-2}), & \|\Bx\|\rightarrow \infty,
\end{cases}
\end{equation}
and
\begin{equation}\label{eq:frequen02}
\begin{cases}
\nabla\times\nabla\times \mathbf{H}(\Bx)-i\omega\lambda_s^{-1}\mathbf{H}(\Bx)=0, &\Bx\in\, \Sigma_s\setminus\overline{\bigcup_{l=1}^{l_0}D_l},\medskip \\
\nabla\times\nabla\times \mathbf{H}(\Bx)-i\omega\lambda_l^{-1}\mathbf{H}(\Bx)=0, &\Bx\in\, D_l,\ \ l=1,2,\ldots, l_0,\medskip \\
\nabla\times\nabla\times \mathbf{H}(\Bx)=0,   &\Bx\in\, \RR^3\setminus{\overline{\Sigma}},\medskip \\
\nabla\cdot (\tilde\mu(\Bx)\mathbf{H}(\Bx))=0, &\Bx\in\, \RR^3\setminus\overline{\Sigma_c},\medskip \\
\mathbf{H}=\Ocal(\|\Bx\|^{-2}), & \|\Bx\|\rightarrow \infty.
\end{cases}
\end{equation}
Based on the above Fourier reformulation, the inverse problem \eqnref{eq:geom6} can be recast as
\begin{equation}\label{eq:geom61}
\left(\bH(\Bx, \omega)-\bH_0(\Bx, \omega) \right)\bigg|_{(\Bx,\omega)\in\Gamma\times(0,\omega_0)}\longrightarrow \bigcup_{l=1}^{l_0} (D_l;\mu_l,\sigma_l).
\end{equation}

We shall follow the general strategy developed in \cite{DLL:18} to tackle the inverse problem \eqref{eq:geom61}. That is, we mainly make use of the static parts of the magnetic fields for the recovery of the inverse problem \eqref{eq:geom61}. To that end, one needs to derive the low-frequency asymptotics of the magnetic fields $\mathbf{H}_0$ and $\mathbf{H}$. However, there are several new challenges as well as the corresponding novel technical developments in the current setup of study. The geomagnetic model is a linear Maxwell system in \cite{DLL:18}, and hence the low-frequency asymptotics therein were conducted globally for the whole system. However, the MHD system is a coupled nonlinear system and similar global low-frequency asymptotics of the Maxwell system \eqref{eq:frequen02} in coupling with the whole MHD system shall be fraught with various difficulties. A critical new technical ingredient in the present work is that we treat the measurement data of $\mathbf{H}$ on $\Gamma$ as source inputs for the Maxwell system \eqref{eq:frequen02}, and one actually can continue the data up to the boundary of the core, namely $\partial\Sigma_c$. Indeed, by analytic continuation, one can first analytically extend the data on $\Gamma$ to the closed surface $\widetilde{\Gamma}$, and then by solving the homogeneous Maxwell system in the outer space, one can further extend the data to $\mathbb{R}^3\backslash\overline{\Sigma}$, in particular to the boundary of the Earth $\partial\Sigma$. Since $\lambda_s$ is known in $\Sigma_s$, one can then continue the Cauchy data on $\partial\Sigma$ to $\partial\Sigma_o$ through the Maxwell system in $\Sigma_s\setminus\overline{\bigcup_{l=1}^{l_0}D_l}$. By incorporating such a critical data continuation into our argument, we can then perform the low-frequency asymptotics locally, that is, outside the core of the Earth. We believe this kind of strategy could be potentially used to deal with other inverse problems associated with certain coupled PDEs in different contexts. Furthermore, we would like to point out that in addition to this ``localization" issue, the Maxwell system \eqref{eq:frequen02} is of a different mathematical nature from that considered in \cite{DLL:18}. After deriving the low-frequency asymptotics, we then further linearize the nonlinear inverse problem \eqref{eq:geom61} with respect to the small-size of the magnetized anomalies and finally arrive at the unique recovery results.

The rest of the paper is organized as follows. In Sections 2 and 3, we derive the low-asymptotic expansions of the magnetic fields as well as the leading-order terms of the static magnetic fields with respect to the small-size of the magnetized anomalies. In Section 4, we establish the unique recovery results for the geomagnetic detection problem.

\section{Integral representation and asymptotic analysis }

We make essential use of the layer potential theory in our study. In what follows, we first collect some preliminary knowledge on the layer potential theory. Then we conduct extensive asymptotic analysis in deriving the low-frequency asymptotic expansions of the magnetic fields as well as the leading-order terms of the static magnetic fields with respect to the small-size of the magnetized anomalies associated with the Maxwell systems \eqref{eq:frequen01} and \eqref{eq:frequen02}.

\subsection{Layer potentials}

For a bounded $C^2$ domain $B\subset\mathbb{R}^3$, we first introduce some function spaces on the boundary $\partial B$ for the subsequent use.
Let  $\nabla_{\p B}\cdot$ denote the surface divergence. Denote by $L_T^2(\p B):=\{\Phi\in {L^2(\p B)}^3, \nu\cdot \Phi=0\}$. Let $H^s(\partial B)$ be the usual Sobolev space of order $s\in\mathbb{R}$ on $\partial B$. Set
\begin{align*}
\mathrm{TH}({\rm div}, \p B):&=\Bigr\{ {\Phi} \in L_T^2(\partial B):
\nabla_{\partial B}\cdot {\Phi} \in L^2(\partial B) \Bigr\}, \\
\mathrm{TH}({\rm curl}, \p B):&=\Bigr\{ {\Phi} \in L_T^2(\partial B):
\nabla_{\partial B}\cdot ({\Phi}\times {\nu}) \in L^2(\partial B) \Bigr\},
\end{align*}
endowed with the norms
\begin{align*}
&\|{\Phi}\|_{\mathrm{TH}({\rm div}, \p B)}=\|{\Phi}\|_{L^2(\p B)}+\|\nabla_{\p B}\cdot {\Phi}\|_{L^2(\p B)}, \\
&\|{\Phi}\|_{\mathrm{TH}({\rm curl}, \p B)}=\|{\Phi}\|_{L^2(\p B)}+\|\nabla_{\p B}\cdot({\Phi}\times \nu)\|_{L^2(\p B)},
\end{align*}
respectively.
Let $ \Gamma_k$ be the fundamental solution to the PDO $(\Delta+k^2)$, which is given by
\begin{equation}\label{Gk} \ds \Gamma_k
(\Bx) = -\frac{e^{ik\|\Bx\|}}{4 \pi \|\Bx\|},\ \ \Bx\in\mathbb{R}^3\ \ \mbox{and}\ \ \Bx\neq \mathbf{0}.
 \end{equation}
For any bounded domain $B\subset \RR^3$, we denote by $\Scal_B^k: H^{-1/2}(\p B)\rightarrow H^{1}(\RR^3\setminus\p B)$ the single layer potential operator given by
\beq\label{eq:layperpt1}
\Scal_{B}^{k}[\phi](\Bx):=\int_{\p B}\Gamma_k(\Bx-\By)\phi(\By)d s_\By,
\eeq
and $\Kcal_B^{k}: H^{1/2}(\p B)\rightarrow H^{1/2}(\p B)$ the Neumann-Poincar\'e operator
\beq\label{eq:layperpt2}
\Kcal_{B}^{k}[\phi](\Bx):=\mbox{p.v.}\quad\int_{\p B}\frac{\p\Gamma_k(\Bx-\By)}{\p \nu_y}\phi(\By)d s_\By,
\eeq
where p.v. stands for the Cauchy principle value. In \eqref{eq:layperpt2} and also in what follows, unless otherwise specified, $\nu$ signifies the exterior unit normal vector to the boundary of the concerned domain.
It is known that the single layer potential operator $\Scal_B^k$ satisfies the following trace formula

\beq\label{eq:trace}
\frac{\p}{\p\nu}\Scal_B^{k}[\phi] \Big|_{\pm} = (\pm \frac{1}{2}I+
(\Kcal_{B}^{k})^*)[\phi] \quad \mbox{on} \, \p B, \eeq
where $(\Kcal_{B}^{k})^*$ is the adjoint operator of $\Kcal_B^{k}$.

For a density function $\Phi \in \mathrm{TH}(\mbox{div}, \p B)$, we define the
vectorial single layer potential by
\beq\label{defA}
\ds\mathcal{A}_B^{k}[\Phi](\Bx) := \int_{\p B} \Gamma_{k}(\Bx-\By)
\Phi(\By) d s_\By, \quad \Bx \in \RR^3\setminus\p B.
\eeq
It is known that $\nabla\times\mathcal{A}_B^{k}$ satisfies the following jump formula
\begin{equation}\label{jumpM}
\nu \times \nabla \times \mathcal{A}_B^{k}[\Phi]\big\vert_\pm = \mp \frac{\Phi}{2} + \mathcal{M}_B^{k}[\Phi] \quad \mbox{on}\,  \p B,
\end{equation}
where \begin{equation*}
\forall \Bx\in \p B, \quad \nu \times \nabla \times \mathcal{A}_B^{k}[\Phi]\big\vert_\pm (\Bx)= \lim_{t\rightarrow 0^+} \nu \times \nabla \times \mathcal{A}_B^{k}[\Phi] (\Bx\pm t \nu)
\end{equation*}
and
\beq\label{Mk}
\mathcal{M}^{k}_B[\Phi](\Bx)= \mbox{p.v.}\quad\nu  \times \nabla \times \int_{\p B} \Gamma_{k}(\Bx-\By) \Phi(\By) d s_\By.
\eeq
We also define $\mathcal{L}^k_B: L^2(\p B) \rightarrow \mathrm{TH}({\rm div}, \p B)$ by
\beq\label{Lk}
\mathcal{L}^k_B[\varphi](\Bx):= \nu_\Bx  \times \nabla\mathcal{S}_B^k[\varphi](\Bx),
\eeq
and $\mathcal{N}^k_B: \mathrm{TH}({\rm div}, \p B) \rightarrow L^2(\p B)$ by
\beq\label{Nk}
\mathcal{N}^k_B[\Phi](\Bx):= \nu_\Bx  \cdot \Big(\nabla\times \mathcal{A}_B^k[\Phi](\Bx)\Big).
\eeq
{It is mentioned that $\frac{I}{2}\pm \Mcal_{B}^{k}$ is invertible on $\rm{TH}({\rm div}, \p B)$} when $k$ is sufficiently small (see e.g., \cite{ADM14,T}). In the following, if $k=0$, we formally set $\Gamma_{k}$ introduced in \eqref{Gk} to be $\Gamma_0=-1/(4\pi\|\Bx\|)$, and the other integral operators introduced above can also be formally defined when $k=0$.

Throughout the rest of the paper, we set
\[
k^{2}:=-i\frac{\omega}{\lambda},\ \ \Re k\geq0,
\]
where $\lambda$ is a constant magnetic diffusivity. $k_s^2$, $k_c^2$ and $k_l^2$, $l=1, 2, \ldots, l_0$ are defined similarly to $k^2$ by replacing $\lambda$ with $\lambda_s$, $\lambda_c$ and $\lambda_l$, $l=1, 2, \ldots, l_0$, respectively.

\subsection{Low-frequency asymptotic expansions of the magnetic fields}

In this subsection, we give the low-frequency asymptotic expansions of the magnetic fields $\mathbf{H}_0$ and $\mathbf{H}$ associated with \eqref{eq:frequen01} and \eqref{eq:frequen02}, respectively. We would like to mention in passing some related literature on the low-frequency asymptotic analysis of the Maxwell system \cite{A1,A2,A3,Das02,DHH:17,Kle}.
We first deal with $\mathbf{H}_0$, which can be represented by the following integral ansatz,
\beq\label{eq:repre02}
\mathbf{H}_0=\left\{\begin{split}
&\nabla\times \Acal^{0}_{\Sigma_c}[\Phi_c]+\nabla\times \Acal^{0}_{\Sigma}[\Phi_s]
+ \nabla\times\nabla\times\Acal^{0}_{\Sigma}[\nabla_{\p \Sigma}\varphi_s]\ \ \ \  \mbox{in} \, \ \RR^3\setminus{\overline{\Sigma}},\medskip\\
&\nabla\times \Acal^{k_s}_{\Sigma_c}[\Phi_c]+\nabla\times \Acal^{k_s}_{\Sigma}[\Phi_s]
+ \nabla\times\nabla\times\Acal^{k_s}_{\Sigma}[\nabla_{\p \Sigma}\varphi_s]\, \, \, \ \mbox{in} \, \ \Sigma_s,
\end{split}
\right.
\eeq
where $(\Phi_c, \Phi_s, \varphi_s)\in\mathrm{TH}(\rm div,\partial\Sigma_c)\times\mathrm{TH}(\rm div,\partial\Sigma)\times {\rm H}^2(\partial\Sigma).$
By the continuous property of  $\nu\times\mathbf{H}_0$ across the boundary $\partial\Sigma_c$ and $\partial\Sigma$, one has
\beq\label{eq:asymtmp01}
\begin{split}
&\nu\times\nabla\times \Acal^{k_s}_{\Sigma_c}[\Phi_c]|_{\partial\Sigma_c}^{+}+\nu\times\nabla\times \Acal^{k_s}_{\Sigma}[\Phi_s]|_{\partial\Sigma_c}^{+}+\nu\times\nabla\times\nabla\times\Acal^{k_s}_{\Sigma}[\nabla_{\p \Sigma}\varphi_s]|_{\partial\Sigma_c}^{+}=\nu\times\mathbf{H}_0|_{\partial\Sigma_c}^{+},
\end{split}
\eeq
and
\beq\label{eq:asymtmp02}
\begin{split}
&\nu\times\nabla\times \Acal^{k_s}_{\Sigma_c}[\Phi_c]|_{\partial\Sigma}^{-}+
\nu\times\nabla\times \Acal^{k_s}_{\Sigma}[\Phi_s]|_{\partial\Sigma}^{-}+\nu\times\nabla\times\nabla\times\Acal^{k_s}_{\Sigma}[\nabla_{\p \Sigma}\varphi_s]|_{\partial\Sigma}^{-}\medskip\\
=&\nu\times\nabla\times \Acal^0_{\Sigma_c}[\Phi_c]|_{\partial\Sigma}^{+}+\nu\times\nabla\times \Acal^0_{\Sigma}[\Phi_s]|_{\partial\Sigma}^{+}+ \nu\times\nabla\times\nabla\times\Acal^{0}_{\Sigma}[\nabla_{\p \Sigma}\varphi_s]|_{\partial\Sigma}^{+},
\end{split}
\eeq
On the other hand, by the continuous of $\nu\cdot \mu\bH_0$ ((see Lemma 3.3 in \cite{DHU:17})) across the boundary $\partial\Sigma$, one further has
\beq\label{eq:asymtmp04}
\begin{split}
&\nu\cdot\Big(\nabla\times \Acal^{k_s}_{\Sigma_c}[\Phi_c]\Big)|_{\partial\Sigma}^{-}+\nu\cdot\Big(\nabla\times \Acal^{k_s}_{\Sigma}[\Phi_s]\Big)|_{\partial\Sigma}^{-}+\nu\cdot\nabla\times\nabla\times\Acal^{k_s}_{\Sigma}[\nabla_{\p \Sigma}\varphi_s]|_{\partial\Sigma}^{-}\medskip\\
=&\nu\cdot\Big(\nabla\times \Acal^0_{\Sigma_c}[\Phi_c]\Big)|_{\partial\Sigma}^{+}+\nu\cdot\Big(\nabla\times \Acal^0_{\Sigma}[\Phi_s]\Big)|_{\partial\Sigma}^{+}+\nu\cdot\nabla\times\nabla\times\Acal^{0}_{\Sigma}[\nabla_{\p \Sigma}\varphi_s]|_{\partial\Sigma}^{+}.
\end{split}
\eeq
Define $\Mcal_{\Sigma_c, \Sigma}^{k'}:=\nu\times\nabla\times \Acal^{k'}_{\Sigma}|_{\partial\Sigma_c},\ \Mcal_{\Sigma, \Sigma_c}^{k'}:=\nu\times\nabla\times \Acal^{k'}_{\Sigma_c}|_{\partial\Sigma},$ and
$\Lcal_{\Sigma_c, \Sigma}^{k'}:=\nu\times\nabla\times\nabla\times\Acal^{k'}_{\Sigma}|_{\partial\Sigma_c},\ \Lcal_{\Sigma, \Sigma_c}^{k'}:=\nu\times\nabla\times\nabla\times\Acal^{k'}_{\Sigma_c}|_{\partial\Sigma},\ k'=0, k_s.$
By using \eqnref{eq:asymtmp01}, \eqnref{eq:asymtmp02} and the jump formula \eqnref{jumpM}, there holds
\beq\label{eq:H00}
\Big(-\frac{I}{2}+\Mcal^{k_s}_{\Sigma_c}\Big)[\Phi_c]+\Mcal_{\Sigma_c, \Sigma}^{k_s}[\Phi_s]+\Lcal_{\Sigma_c, \Sigma}^{k_s}[\nabla_{\p \Sigma}\varphi_s]=\nu\times\mathbf{H}_0|_{\partial\Sigma_c}^{+},
\eeq
and
\beq\label{eq:MM0}
\Phi_s+\big(\Mcal^{k_s}_{\Sigma}-\Mcal^0_{\Sigma}\big)[\Phi_s]+\big(\Mcal_{\Sigma, \Sigma_c}^{k_s}-\Mcal_{\Sigma,\Sigma_c}^0\big)[\Phi_c]+\big(\Lcal^{k_s}_{\Sigma}-\Lcal^0_{\Sigma}\big)[\nabla_{\p \Sigma}\varphi_s]=0.
\eeq
Define $\Ncal_{\Sigma_c, \Sigma}^{k'}:=\nu\cdot\Big(\nabla\times \Acal^{k'}_{\Sigma}|_{\partial\Sigma_c}\Big),\ \Ncal_{\Sigma, \Sigma_c}^{k'}:=\nu\cdot\Big(\nabla\times \Acal^{k'}_{\Sigma_c}|_{\partial\Sigma}\Big),\ k'=0, k_s.$ By using \eqnref{eq:asymtmp04}, noting that $\nabla\times\nabla\times=-\Delta+\nabla\nabla\cdot$, and the jump formula \eqnref{eq:trace}, one further has
\beq\label{eq:MM1}
\begin{split}
&(\Ncal_{\Sigma,\Sigma_c}^{k_s}-\Ncal_{\Sigma,\Sigma_c}^{0})[\Phi_c]+(\Ncal_{\Sigma}^{k_s}-\Ncal_{\Sigma}^0)[\Phi_s]
+k_s^2\nu\cdot\Acal_{\Sigma}^{k_s}[\nabla_{\p \Sigma}\varphi_s]\\
&-\nabla_{\p \Sigma}\cdot\nabla_{\p \Sigma}\varphi_s+\Big((\Kcal^{k_s}_{\Sigma})^*-(\Kcal^{0}_{\Sigma})^*\Big)[\nabla_{\p \Sigma}\cdot\nabla_{\p \Sigma}\varphi_s]=0.
\end{split}
\eeq
We shall need the following elementary estimates (see \cite{ADM14,DLL:18}),
\begin{lem}\label{le:basicasym01}
There hold the following asymptotic relationships for $k_s$ sufficiently small:
\beq\label{eq:M0}
\begin{split}
&\Mcal_{\Sigma', \Sigma''}^{k_s}=\Mcal_{\Sigma', \Sigma''}^{0}+\Ocal(k_s^2), \quad \Lcal^{k_s}_{\Sigma', \Sigma''}=\Lcal^{0}_{\Sigma', \Sigma''}+\Ocal(k_s^2),\\
&\Ncal^{k_s}_{\Sigma', \Sigma''}=\Ncal^{0}_{\Sigma', \Sigma''}+\Ocal(k_s^2), \quad (\Kcal^{k_s}_{\Sigma'})^*=(\Kcal^{0}_{\Sigma'})^*+\Ocal(k_s^2),
\end{split}
\eeq
where $\Sigma', \Sigma''\in \{\Sigma, \Sigma_c, D_1, D_2, \ldots, D_l\}$. Furthermore, if $\Sigma'=\Sigma''$ then the subscripts of the related operators in \eqref{eq:M0} reduces to $\Sigma'$ only.
\end{lem}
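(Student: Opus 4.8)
The plan is to establish \eqref{eq:M0} by expanding the fundamental solution $\Gamma_{k_s}(\Bx-\By)$ in powers of $k_s$ and tracking the resulting series termwise through each integral operator. Recall from \eqref{Gk} that
\[
\Gamma_{k_s}(\Bx) = -\frac{e^{ik_s\|\Bx\|}}{4\pi\|\Bx\|} = -\frac{1}{4\pi\|\Bx\|}\sum_{n=0}^\infty \frac{(ik_s\|\Bx\|)^n}{n!} = \Gamma_0(\Bx) - \frac{ik_s}{4\pi} - \frac{k_s^2\|\Bx\|}{8\pi} + \cdots.
\]
The crucial structural observation is that the $n=1$ term is the constant $-ik_s/(4\pi)$, which is independent of $\Bx$ and $\By$. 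First I would show that when this constant term is fed through $\mathcal{M}_{\Sigma',\Sigma''}^{k'}$, $\mathcal{L}_{\Sigma',\Sigma''}^{k'}$, $\mathcal{N}_{\Sigma',\Sigma''}^{k'}$ or $(\mathcal{K}_{\Sigma'}^{k'})^*$, it contributes nothing: each of these operators involves either a curl in $\Bx$ (for $\mathcal{M}$, $\mathcal{L}$, $\mathcal{N}$) or a normal derivative $\partial/\partial\nu_\By$ (for $(\mathcal{K}^{k'})^*$) applied to the kernel, and a constant kernel is annihilated by either operation. Hence the first nonvanishing correction is the $O(k_s^2)$ term, and one writes, for each operator $\mathcal{T}\in\{\mathcal{M},\mathcal{L},\mathcal{N}\}$,
\[
\mathcal{T}_{\Sigma',\Sigma''}^{k_s} - \mathcal{T}_{\Sigma',\Sigma''}^{0} = \mathcal{T}_{\Sigma',\Sigma''}^{k_s} - \mathcal{T}_{\Sigma',\Sigma''}^{0} - (\text{the vanishing }O(k_s)\text{ term}) = O(k_s^2),
\]
with the analogous statement for $(\mathcal{K}_{\Sigma'}^{k_s})^*$.

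Second, I would make the remainder bound rigorous. Writing $\Gamma_{k_s}-\Gamma_0 = -ik_s/(4\pi) + R_{k_s}(\Bx-\By)$, one has by Taylor's theorem with remainder $R_{k_s}(\Bx) = \Gamma_0(\Bx)\big(e^{ik_s\|\Bx\|}-1-ik_s\|\Bx\|\big)$, so that $|R_{k_s}(\Bx)| \le C k_s^2 \|\Bx\|$ uniformly for $\Bx$ in a bounded region and $k_s$ in a bounded complex neighborhood of $0$ with $\Re k_s\ge 0$; moreover $R_{k_s}$ and its relevant $\Bx$-derivatives depend holomorphically on $k_s$. Since $\Sigma'$ and $\Sigma''$ are fixed bounded $C^2$ domains (from the list $\{\Sigma,\Sigma_c,D_1,\dots,D_{l_0}\}$), when $\Sigma'\ne\Sigma''$ the kernels are smooth (the domains are disjoint or one contains the other with positive separation of boundaries, or at worst share no boundary points — in the relevant cases $\partial\Sigma_c$ and $\partial\Sigma$ are disjoint), so differentiating under the integral sign is immediate and the operator-norm bound $O(k_s^2)$ from $H^{-1/2}$ or $\mathrm{TH}(\mathrm{div})$ to the appropriate target space follows from the pointwise kernel estimate. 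When $\Sigma'=\Sigma''$ the kernel is weakly singular; here one invokes the standard mapping properties of the layer potentials (as collected in, e.g., \cite{ADM14}) together with the fact that $R_{k_s}$ is \emph{less} singular than $\Gamma_0$ by a factor $\|\Bx\|$, so the singular-integral estimates carry through with the extra $k_s^2$ factored out.

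The main obstacle I anticipate is purely bookkeeping rather than conceptual: one must verify that the differential operations defining $\mathcal{M}^{k_s}$, $\mathcal{L}^{k_s}$, $\mathcal{N}^{k_s}$ and $(\mathcal{K}^{k_s})^*$ genuinely commute with the Taylor expansion in $k_s$ in the sense of the correct function-space topologies — in particular, for $(\mathcal{K}_\Sigma^{k_s})^*$ acting on $\nabla_{\p\Sigma}\cdot\nabla_{\p\Sigma}\varphi_s$ (cf.\ \eqref{eq:MM1}), one is composing with second-order surface derivatives, so one should be slightly careful about which Sobolev indices the $O(k_s^2)$ estimate holds in. However, since the statement only asserts the asymptotic relations at the level used subsequently (plugging into \eqref{eq:H00}--\eqref{eq:MM1}), it suffices to record the estimates in the natural spaces $\mathrm{TH}(\mathrm{div},\p\Sigma')\to L^2$ or $\to\mathrm{TH}(\mathrm{div})$, where the above argument is clean. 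The reduction of subscripts in the case $\Sigma'=\Sigma''$ is automatic from the definitions. Since all of this is essentially the content of the cited references \cite{ADM14,DLL:18}, I would keep the proof short, stating the kernel expansion, noting the vanishing of the $O(k_s)$ term, and citing the standard layer-potential estimates for the remainder.
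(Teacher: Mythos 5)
Your proposal is correct and is essentially the standard argument behind this lemma: the paper itself offers no proof, simply citing \cite{ADM14,DLL:18}, and the proofs there proceed exactly as you do --- expand $\Gamma_{k_s}$ in powers of $k_s$, observe that the $\Ocal(k_s)$ term is a constant in $\Bx,\By$ and is therefore annihilated by the curls and normal derivatives defining $\Mcal$, $\Lcal$, $\Ncal$ and $(\Kcal)^*$, and bound the remaining, less singular kernel by the standard mapping properties of layer potentials. Two harmless slips that do not affect the argument: the $k_s^2$ term in your kernel expansion should be $+\,k_s^2\|\Bx\|/(8\pi)$ (since $(ik_s)^2=-k_s^2$), and the kernel of $(\Kcal_{\Sigma'}^{k_s})^*$ carries the normal derivative in $\Bx$ rather than in $\By$.
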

Based on \eqnref{eq:H00}-\eqnref{eq:MM1} and Lemma \ref{le:basicasym01}, we can derive the following asymptotic expansion of $\bH_0$, whose proof is given in Appendix A.
\begin{lem}\label{le:asymH001}
Let $\bH_0$ be the solution to \eqnref{eq:frequen01}. Then for $\omega\in\mathbb{R}_+$ sufficiently small, there holds the following asymptotic expansion:
\beq\label{eq:le01sol01}
\bH_0=\nabla\times \Acal^{0}_{\Sigma_c}\Big(-\frac{I}{2}+\Mcal_{\Sigma_c}^0\Big)^{-1}[\nu\times\bH_0|_{\p \Sigma_c}^+]+\Ocal(k_s^2) \quad \mbox{in} \quad \RR^3\setminus\overline{\Sigma_c}.
\eeq
\end{lem}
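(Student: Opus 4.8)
The plan is to start from the integral ansatz \eqref{eq:repre02} and the three boundary-coupling identities \eqref{eq:H00}, \eqref{eq:MM0}, \eqref{eq:MM1}, and to extract the leading-order behaviour of the densities $(\Phi_c,\Phi_s,\varphi_s)$ as $k_s\to 0$. The key observation is that the coupling identities are naturally organized into a block system: \eqref{eq:H00} expresses the Cauchy data $\nu\times\bH_0|_{\p\Sigma_c}^+$ in terms of all three densities through the operator $-\tfrac{I}{2}+\Mcal^{k_s}_{\Sigma_c}$ acting on $\Phi_c$ plus cross-terms $\Mcal_{\Sigma_c,\Sigma}^{k_s}[\Phi_s]+\Lcal_{\Sigma_c,\Sigma}^{k_s}[\nabla_{\p\Sigma}\varphi_s]$, while \eqref{eq:MM0} and \eqref{eq:MM1} govern $\Phi_s$ and $\varphi_s$ themselves. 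The crucial structural point is that in \eqref{eq:MM0} the density $\Phi_s$ appears with the identity, and the remaining operators are the \emph{differences} $\Mcal^{k_s}_\Sigma-\Mcal^0_\Sigma$, $\Mcal^{k_s}_{\Sigma,\Sigma_c}-\Mcal^0_{\Sigma,\Sigma_c}$, $\Lcal^{k_s}_\Sigma-\Lcal^0_\Sigma$, each of which is $\Ocal(k_s^2)$ by Lemma~\ref{le:basicasym01}; similarly every term in \eqref{eq:MM1} except the purely static pieces is either a difference of the above type or carries an explicit factor $k_s^2$. Hence, reading \eqref{eq:MM0}--\eqref{eq:MM1} as a $2\times2$ operator system for $(\Phi_s,\varphi_s)$ with $\Phi_c$ as a parameter, the leading operator is (up to the invertible static part in the $\varphi_s$ equation) the identity plus an $\Ocal(k_s^2)$ perturbation, forcing $\Phi_s=\Ocal(k_s^2)$ and $\nabla_{\p\Sigma}\varphi_s=\Ocal(k_s^2)$ as long as $\nu\times\bH_0|_{\p\Sigma_c}^+$ stays bounded.

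Next I would substitute these estimates back into \eqref{eq:H00}. Since $\Phi_s$ and $\nabla_{\p\Sigma}\varphi_s$ are $\Ocal(k_s^2)$ and the operators $\Mcal_{\Sigma_c,\Sigma}^{k_s}$, $\Lcal_{\Sigma_c,\Sigma}^{k_s}$ are bounded uniformly for small $k_s$, the cross-terms in \eqref{eq:H00} are $\Ocal(k_s^2)$, so
\beq\label{eq:propprooftmp}
\Big(-\tfrac{I}{2}+\Mcal^{k_s}_{\Sigma_c}\Big)[\Phi_c]=\nu\times\bH_0|_{\p\Sigma_c}^+ + \Ocal(k_s^2).
\eeq
Using $\Mcal^{k_s}_{\Sigma_c}=\Mcal^0_{\Sigma_c}+\Ocal(k_s^2)$ from Lemma~\ref{le:basicasym01}, and the invertibility of $-\tfrac{I}{2}+\Mcal^{0}_{\Sigma_c}$ on $\mathrm{TH}(\mathrm{div},\p\Sigma_c)$ (this is the $k=0$ case of the stated invertibility of $\tfrac{I}{2}\pm\Mcal^{k}_B$), a standard Neumann-series/perturbation argument gives $\Phi_c=\big(-\tfrac{I}{2}+\Mcal^0_{\Sigma_c}\big)^{-1}[\nu\times\bH_0|_{\p\Sigma_c}^+]+\Ocal(k_s^2)$. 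Finally, inserting this into the exterior representation in \eqref{eq:repre02} — where, again by Lemma~\ref{le:basicasym01} applied to $\Acal^0$ versus the kernels involved, the $\Phi_s$ and $\varphi_s$ contributions are $\Ocal(k_s^2)$ in $H_{loc}(\mathrm{curl},\RR^3\setminus\overline{\Sigma_c})$ and $\nabla\times\Acal^{0}_{\Sigma_c}$ is a bounded map on the relevant spaces — yields exactly \eqref{eq:le01sol01}.

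The step I expect to be the main obstacle is making the $\Ocal(k_s^2)$ bookkeeping fully rigorous in the coupled system \eqref{eq:MM0}--\eqref{eq:MM1}: one must verify that the perturbed block operator is invertible with inverse uniformly bounded as $k_s\to0$ (so that the error terms genuinely propagate as $\Ocal(k_s^2)$ rather than being amplified), and one must check that the static operator $\nabla_{\p\Sigma}\cdot\nabla_{\p\Sigma}$ appearing in \eqref{eq:MM1} together with $(\Kcal^0_\Sigma)^*$ does not introduce a nontrivial kernel that would spoil the conclusion $\nabla_{\p\Sigma}\varphi_s=\Ocal(k_s^2)$. This requires identifying the right function spaces (the $\mathrm{TH}(\mathrm{div})$ and $H^2(\p\Sigma)$ scales in \eqref{eq:repre02}) in which the static limiting system is a Fredholm system of index zero with trivial kernel, which in turn rests on the uniqueness of the corresponding static transmission/exterior problem for $\bH_0$. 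I would handle this by first recording the static ($k_s=0$) solvability as a lemma and then treating the $k_s>0$ case as a holomorphic (in $k_s^2$) perturbation, since this is where the genuine analysis lies while the rest is routine; accordingly the detailed verification is deferred to Appendix~A as indicated.
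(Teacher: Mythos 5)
Your proposal follows essentially the same route as the paper's Appendix A: use Lemma \ref{le:basicasym01} in \eqref{eq:MM1} and \eqref{eq:MM0} to conclude that $\nabla_{\p \Sigma}\varphi_s$ and $\Phi_s$ are $\Ocal(k_s^2)$ relative to the total density norm, then reduce \eqref{eq:H00} to the static operator $-\frac{I}{2}+\Mcal_{\Sigma_c}^0$, invert it on $\mathrm{TH}(\mathrm{div},\p\Sigma_c)$, and insert the result into the integral representation. The obstacle you flag is handled in the paper not by a Fredholm/static-uniqueness argument but by the direct estimate $\|\nabla_{\p \Sigma}\varphi_s\|_{\mathrm{TH}(\mathrm{div},\p\Sigma)}\le C\|\nabla_{\p\Sigma}\cdot\nabla_{\p \Sigma}\varphi_s\|_{L^2(\p\Sigma)}$, proved via a Helmholtz decomposition on $\p\Sigma$ and the Poincar\'e inequality; note also that the static part of \eqref{eq:MM1} is just the surface Laplacian, since the $(\Kcal^{k_s}_{\Sigma})^*-(\Kcal^{0}_{\Sigma})^*$ term is itself $\Ocal(k_s^2)$, so no extra kernel issue beyond constants arises.
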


Next we establish the asymptotic expansion of the magnetic field $\mathbf{H}$ associated with the system \eqnref{eq:frequen02}.  First, by using standard layer potential theory, one can directly verify that $\mathbf{H}$ can be given by the following integral ansatz,
\beq\label{eq:repre02}
\bH=\left\{\begin{split}
&\nabla\times\Acal_{\Sigma_c}^0[\Psi_0]+\nabla\times\Acal_{\Sigma}^0[\Phi_0]+ \nabla\times\nabla\times\Acal_{\Sigma}^0[\nabla_{\p \Sigma}\varphi_0]\\
&\quad+\nabla\times\sum_{l'=1}^{l_0}\Acal_{D_{l'}}^{0}[\Phi_{l'}]+ \nabla\times\nabla\times\sum_{l'=1}^{l_0}\Acal_{D_{l'}}^{0}[\nabla_{\p D_{l'}}\varphi_{l'}] \quad \mbox{in} \ \  \RR^3\setminus\overline{\Sigma},\\
&\nabla\times\Acal_{\Sigma_c}^{k_s}[\Psi_0]+\nabla\times\Acal_{\Sigma}^{k_s}[\Phi_0]+ \nabla\times\nabla\times\Acal_{\Sigma}^{k_s}[\nabla_{\p \Sigma}\varphi_0]\\
&\quad+\nabla\times\sum_{l'=1}^{l_0}\Acal_{D_{l'}}^{k_s}[\Phi_{l'}]+ \nabla\times\nabla\times\sum_{l'=1}^{l_0}\Acal_{D_{l'}}^{k_s}[\nabla_{\p D_{l'}}\varphi_{l'}] \quad \mbox{in}\ \ \Sigma_s\setminus\overline{\bigcup_{l'=1}^{l_0} D_{l'}}, \\
&\nabla\times\Acal_{\Sigma_c}^{k_l}[\Psi_0]+\nabla\times\Acal_{\Sigma}^{k_l}[\Phi_0]+ \nabla\times\nabla\times\Acal_{\Sigma}^{k_l}[\nabla_{\p \Sigma}\varphi_0]\\
&\quad+\nabla\times\sum_{l'=1}^{l_0}\Acal_{D_{l'}}^{k_{l}}[\Phi_{l'}]+ \nabla\times\nabla\times\sum_{l'=1}^{l_0}\Acal_{D_{l'}}^{k_l}[\nabla_{\p D_{l'}}\varphi_{l'}] \quad \mbox{in} \ \ D_{l},\, l=1, 2, \cdots, l_0,
\end{split}
\right.
\eeq
where the densities $(\Psi_0, \Phi_0, \Phi_1, \Phi_2, \ldots, \Phi_{l_0})$ satisfy
$(\Psi_0, \Phi_0, \Phi_1, \Phi_2, \ldots, \Phi_{l_0})\in {\rm TH}({\rm div}, \p \Sigma_c)\otimes {\rm TH}({\rm div}, \p \Sigma)\otimes{\rm TH}({\rm div}, \p D_1)\otimes{\rm TH}({\rm div}, \p D_2)\otimes \cdots\otimes{\rm TH}({\rm div}, \p D_{l_0}).$ The densities
$(\varphi_0, \varphi_1,\ldots, \varphi_{l_0})$ belongs to $H^1(\p \Sigma)\otimes H^1(\p D_1)\otimes
\cdots\otimes H^1(\p D_{l_0})$. We show the uniqueness of the densities in \eqnref{eq:repre02} when $\omega$ is sufficiently small. Based on the above integral representation, we can show the following asymptotic expansion of the magnetic field $\bH$, where proof is given in Appendix B.
\begin{lem}\label{le:asymH01}
Let $\bH$ be the solution to \eqnref{eq:frequen02}. Then for $\omega\in\mathbb{R}_+$ sufficiently small, there holds the following asymptotic expansion:
\beq\label{eq:le02sol01}
\bH=\nabla\times\Acal_{\Sigma_c}^0[\Psi_0^{(0)}]+\nabla\sum_{l=1}^{l_0}\Scal_{D_{l}}^{0}[\varphi_{l}^{(0)}]+\Ocal(\omega) \quad \mbox{in} \quad \RR^3\setminus\overline{\Sigma_c},
\eeq
where $(\varphi_1^{(0)}, \varphi_2^{(0)}, \ldots, \varphi_{l_0}^{(0)})\in L^2(\p D_1)\otimes L^2(\p D_2)\otimes \cdots\otimes L^2(\p D_{l_0})$ are solutions to the following integral equations:
\beq\label{eq:le02sol02}
\begin{split}
&\left(\varsigma_l I -(\Kcal_{D_l}^0)^*+\Pcal_{D_l, \Sigma_c}\Lcal_{\Sigma_c, D_l}^0\right)[\varphi_l^{(0)}]-\sum_{l'\neq l}^{l_0} \Big(\Kcal_{D_l, D_{l'}}^0-\Pcal_{D_l, \Sigma_c}\Lcal_{\Sigma_c, D_{l'}}^0\Big)[\varphi_{l'}^{(0)}] \\
=&\Pcal_{D_l, \Sigma_c}[\nu\times\bH^0|_{\p \Sigma_c}] \quad \mbox{on} \quad \p D_l, \ l=1,2,\ldots,l_0,
\end{split}
\eeq
where
\beq\label{eq:defsigmal}
\varsigma_l:=\frac{\mu_l+\mu_0}{2(\mu_l-\mu_0)},
\eeq
and the operators $\Kcal_{D_l, D_{l'}}^0$ are defined in \eqnref{eq:defKcalsp01} and $\Pcal_{D_l, \Sigma_c}$ are defined in \eqnref{eq:defPcal01}, $l, l'=1, 2, \ldots, l_0$. Furthermore, $\Psi_0^{(0)}\in {\rm TH}({\rm div}, \p \Sigma_c)$ is the solution to the following integral equation:
\beq\label{eq:le02sol03}
\Psi_0^{(0)}=\Big(-\frac{I}{2}+\Mcal_{\Sigma_c}^0\Big)^{-1}\Big[\nu\times\bH^0|_{\p \Sigma_c}\Big]-\sum_{l'=1}^{l_0}\Big(-\frac{I}{2}+\Mcal_{\Sigma_c}^0\Big)^{-1}\Lcal_{\Sigma_c, D_{l'}}^{0}[\varphi_{l'}^{(0)}].
\eeq
Here, we denote by $\bH^0$ the leading-order term of $\bH$ with respect to $\omega$, i.e., $\bH=\bH^0+\Ocal(\omega)$.
\end{lem}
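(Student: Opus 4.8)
The plan is to follow the route of Lemma~\ref{le:asymH001} and its proof in Appendix A, now carrying along the transmission conditions across the additional interfaces $\p D_l$. Substituting the ansatz \eqref{eq:repre02} into the system \eqref{eq:frequen02}, one checks that each of the three branches already satisfies the relevant curl--curl equation in its region, the decay at infinity, and the constraint $\nabla\cdot(\ti\mu\bH)=0$ (the $\nabla\times$-potentials are divergence free, while the $\nabla_{\p\Sigma}\varphi_0$- and $\nabla_{\p D_{l'}}\varphi_{l'}$-potentials are the degrees of freedom tuned to the normal-component contrasts). Hence the densities are fixed by: continuity of $\nu\times\bH$ and of $\nu\cdot\ti\mu\bH$ across $\p\Sigma$ and across each $\p D_l$, together with the requirement that $\nu\times\bH|_{\p\Sigma_c}^+$ coincide with the analytically continued Cauchy datum on $\p\Sigma_c$ --- which, as explained in the Introduction, is what makes the exterior representation \eqref{eq:repre02} legitimate and turns $\nu\times\bH|_{\p\Sigma_c}$ into a prescribed source term.

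Applying the jump relations \eqref{jumpM} and \eqref{eq:trace}, and using $\nabla\times\nabla\times=-\Delta+\nabla\nabla\cdot$ together with the harmonicity of $\Gamma_0$ (so that $\nabla\times\nabla\times\Acal^0_{D_l}[\nabla_{\p D_l}\varphi_l]=\nabla\Scal^0_{D_l}[\nabla_{\p D_l}\cdot\nabla_{\p D_l}\varphi_l]$ off $\p D_l$), these interface conditions become a closed boundary integral system for $(\Psi_0,\Phi_0,\varphi_0,\Phi_1,\ldots,\Phi_{l_0},\varphi_1,\ldots,\varphi_{l_0})$. I would first record that this system is uniquely solvable for all sufficiently small $\omega$: its $\omega=0$ limit has invertible diagonal blocks --- $-\tfrac12 I+\Mcal^0_{\Sigma_c}$ on $\p\Sigma_c$ (invertible, as quoted in the text), the scalar $\p D_l$-block reducing to $\varsigma_l I-(\Kcal^0_{D_l})^*$, and identity-type blocks on $\p\Sigma$ and in the tangential $\p D_l$-equations --- with compact off-diagonal couplings, so it is Fredholm of index zero; invertibility of $\varsigma_l I-(\Kcal^0_{D_l})^*$ holds because $|\varsigma_l|>\tfrac12$ whenever $\mu_l,\mu_0>0$ with $\mu_l\neq\mu_0$, hence $\varsigma_l$ lies off the $L^2$-spectrum of the Neumann--Poincar\'e operator, and injectivity follows from uniqueness for the associated exterior static transmission problem. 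A Neumann series in $\omega$ then extends invertibility to small $\omega>0$, giving the uniqueness of the densities claimed before the lemma.

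Next I would expand in $\omega$ with the help of Lemma~\ref{le:basicasym01}, which replaces each $k$-dependent operator by its $k=0$ counterpart modulo $\Ocal(k_s^2)+\Ocal(k_l^2)=\Ocal(\omega)$. Since $\ti\mu=\mu_0$ on both sides of $\p\Sigma$, the $\p\Sigma$-conditions are degenerate at $\omega=0$ and force $\Phi_0=\Ocal(\omega)$, $\varphi_0=\Ocal(\omega)$, exactly as $\Phi_s,\varphi_s$ dropped out at leading order in Appendix A; the tangential $\p D_l$-condition likewise forces $\Phi_l=\Ocal(\omega)$ (the $\nabla_{\p D_l}\varphi_l$-potential being tangentially continuous at $k=0$), whereas the normal $\p D_l$-condition keeps the genuine contrast $\mu_l/\mu_0$ and retains a nonzero leading part --- combining $\mu_0(-\tfrac12 I+(\Kcal^0_{D_l})^*)$ with $\mu_l(\tfrac12 I+(\Kcal^0_{D_l})^*)$ is precisely where $\varsigma_l$ of \eqref{eq:defsigmal} appears. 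Keeping the leading terms, the $\p\Sigma_c$-equation becomes $(-\tfrac12 I+\Mcal^0_{\Sigma_c})[\Psi_0^{(0)}]+\sum_{l'}\Lcal^0_{\Sigma_c,D_{l'}}[\varphi_{l'}^{(0)}]=\nu\times\bH^0|_{\p\Sigma_c}$, i.e.\ \eqref{eq:le02sol03}; substituting this expression for $\Psi_0^{(0)}$ into the $\p D_l$-equations eliminates $\Psi_0^{(0)}$ and produces the Schur-complemented system \eqref{eq:le02sol02}, with $\Pcal_{D_l,\Sigma_c}$ the operator ``solve the core--exterior problem from the $\p\Sigma_c$ data, then take the normal trace on $\p D_l$'' (explicit form in \eqref{eq:defPcal01}) and $\Kcal^0_{D_l,D_{l'}}$ the inter-anomaly Neumann--Poincar\'e operator of \eqref{eq:defKcalsp01}. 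Finally, inserting the leading-order densities back into \eqref{eq:repre02}, using $\Phi_0,\varphi_0,\Phi_l=\Ocal(\omega)$ and the harmonicity identity $\nabla\times\nabla\times\Acal^0_{D_l}[\nabla_{\p D_l}\varphi_l]=\nabla\Scal^0_{D_l}[\varphi_l^{(0)}]+\Ocal(\omega)$ (with $\varphi_l^{(0)}$ the leading part of $\nabla_{\p D_l}\cdot\nabla_{\p D_l}\varphi_l$), gives \eqref{eq:le02sol01} in $\RR^3\setminus\overline{\Sigma_c}$.

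The main obstacle, apart from the bookkeeping of the coupled jump relations, is twofold. First, one must show that the reduced leading-order system \eqref{eq:le02sol02} for $\{\varphi_l^{(0)}\}$ is itself uniquely solvable, i.e.\ that the operator with diagonal blocks $\varsigma_l I-(\Kcal^0_{D_l})^*+\Pcal_{D_l,\Sigma_c}\Lcal^0_{\Sigma_c,D_l}$ and off-diagonal blocks $-\Kcal^0_{D_l,D_{l'}}+\Pcal_{D_l,\Sigma_c}\Lcal^0_{\Sigma_c,D_{l'}}$ is invertible; I would argue this by writing it as the block-diagonal invertible operator $\mathrm{diag}(\varsigma_l I-(\Kcal^0_{D_l})^*)$ plus a perturbation that is compact --- and, since $\mathrm{dist}(D_l,\p\Sigma_c)>0$ and the $D_{l'}$ are mutually disjoint, in fact smoothing --- together with injectivity, the latter reducing to uniqueness of the static transmission problem solved by $\bH^0$ in $\RR^3\setminus\overline{\Sigma_c}$. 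Second, one must check that the $\Ocal(\omega)$ remainder in \eqref{eq:le02sol01} is uniform as $\omega\to0^+$, which reduces to a uniform operator-norm bound on the inverse of the full $\omega$-dependent boundary integral system; this again rests on the $\omega=0$ invertibility established above together with a Neumann-series argument. I expect the invertibility of the Schur-complemented anomaly system to be the genuinely delicate point, since the feedback operators $\Pcal_{D_l,\Sigma_c}\Lcal^0_{\Sigma_c,D_{l'}}$ are new relative to \cite{DLL:18} and one must make sure they do not destroy the classical Neumann--Poincar\'e invertibility of the diagonal.
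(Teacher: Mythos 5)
Your proposal follows essentially the same route as the paper's own proof in Appendix B: the integral ansatz with the $\p\Sigma_c$ trace treated as prescribed data, the jump relations for $\nu\times\bH$ and $\nu\cdot\ti\mu\bH$, the observation that $\Phi_0,\varphi_0,\Phi_l=\Ocal(\omega)$ because there is no permeability contrast across $\p\Sigma$ (and the tangential $\p D_l$ jumps cancel at $k=0$), the Schur elimination of $\Psi_0^{(0)}$ producing \eqref{eq:le02sol02}--\eqref{eq:le02sol03}, and Fredholm-plus-injectivity (via uniqueness for the static exterior transmission problem, with the coupling operators compact and the anomalies small and disjoint) for the reduced system. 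This matches the paper's argument, including your identification of $\varphi_l^{(0)}$ with the leading part of $\Delta_{\p D_l}\varphi_l$, so no substantive discrepancy to report.
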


\subsection{Asymptotic expansions of the static magnetic fields with respect to the anomaly size}\label{sec:02}

In this section, we make further asymptotic expansions of the static magnetic fields derived in the previous subsection with respect to the size of the magnetized anomalies. In fact, from a practical point of view, the size of the magnetized anomalies $(D_l; \mu_l, \sigma_l)$, $l=1,2,\ldots,l_0$, introduced in \eqref{eq:paradefsigma}, is much smaller than the size of the Earth. Hence, we can assume that
\beq\label{eq:permeab02}
D_l=\delta\Omega+\bZ_l, \quad l=1, 2, \ldots, l_0,
\eeq
where $\Omega$ is a bounded Lipschitz domain in $\RR^3$ with $\Omega\Subset \Sigma$, and $\delta\in\mathbb{R}_+$ is sufficiently small.
Furthermore, we assume that $D_l$, $l=1, 2, \ldots, l_0$ are sparsely distributed and $\Bz_l$, $l=1, 2, \ldots, l_0$, are sufficiently far away from $\p \Sigma_s$ such that $\Bx-\Bz_l\gg\delta$, for any $\Bx\in\p \Sigma_s$. With the above preparations, we are in a position to derive the asymptotic expansions of the static geomagnetic fields with respect to the size of the magnetized anomalies. To that end, we define a matrix operator $\mathbb{M}_{D}$ on $L^2(\p D_1)\otimes L^2(\p D_2)\otimes\ldots\otimes L^2(\p D_{l_0})$ by
\beq\label{eq:asymtmpMD}
\mathbb{M}_{D}:=\left[
\begin{array}{cccc}
\Qcal_{D_1} & \Qcal_{D_1,D_2} &\cdots &\Qcal_{D_1,D_{l_0}}\\
\Qcal_{D_2,D_1} & \Qcal_{D_2} &\cdots &\Qcal_{D_2,D_{l_0}}\\
\vdots & \vdots &\ddots  \vdots \\
\Qcal_{D_{l_0},D_1} & \Qcal_{D_{l_0}, D_2} &\cdots &\Qcal_{D_{l_0}}
\end{array}
\right],
\eeq
where $\Qcal_{D_l, D_{l'}}:=\Kcal_{D_l, D_{l'}}^0-\Pcal_{D_l, \Sigma_c}\Lcal_{\Sigma_c, D_{l'}}^0$ and $\Kcal_{D_l, D_{l'}}^0$ are defined in \eqnref{eq:defKcalsp01}, $l, l'= 1, 2, \ldots, l_0$.

We first have the following lemma
\begin{lem}\label{le:smallobj01}
Suppose $D_l$, $l=1, 2 ,\ldots, l_0$ are defined in \eqnref{eq:permeab02} with $\delta\in\mathbb{R}_+$ sufficiently small. Let $\mathbb{M}_{D}$ be defined in \eqnref{eq:asymtmpMD}. If $(\phi_0, \phi_1, \phi_2, \ldots, \phi_{l_0})\in L_0^2(\p D_1)\otimes L_0^2(\p D_2)\otimes\ldots\otimes L_0^2(\p D_{l_0})$, where $L_0^2(\p D_l)$, $1\leq l\leq l_0$ is the $L^2(\p D_l)$ space whose elements have zero means on the boundary, then there holds
\beq\label{eq:MDd}
\mathbb{M}_{D}(\phi_0, \phi_1, \phi_2, \ldots, \phi_{l_0})^T=\mathbb{N}_{\Omega}(\tilde\phi_0, \tilde\phi_1, \tilde\phi_2, \ldots, \tilde\phi_{l_0})^T +\Ocal(\delta^2),
\eeq
where $\mathbb{N}_{\Omega}$ is an $l_0\times l_0$ matrix-valued operator defined by
\beq\label{eq:lesmobj0102}
\mathbb{N}_\Omega:={\rm diag}((\Kcal_{\Omega}^0)^*,(\Kcal_{\Omega}^0)^*,\ldots,(\Kcal_{\Omega}^0)^*).
\eeq
Here, the density functions $\tilde\phi_l$, $l=0,1,\ldots,l_0$, are defined as, $\tilde\phi_{l}(\tdy):=\phi_{l}(\By)$, where $\tdy=\delta^{-1}(\By-\Bz_{l}) \in \p  \Omega$, $l\in \{1, 2, \ldots, l_0\}$.
\end{lem}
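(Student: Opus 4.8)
The plan is to exploit the scaling structure $D_l=\delta\Omega+\bz_l$ to pull each boundary integral operator back to a fixed reference domain $\Omega$ and then track the powers of $\delta$ that appear. First I would record the elementary scaling relations for the relevant layer potentials on $\p D_l$. Writing $\tdy=\delta^{-1}(\By-\bz_l)$ and $\tdx=\delta^{-1}(\Bx-\bz_l)$, the surface measure scales as $ds_\By=\delta^2\, ds_{\tdy}$, and since $\Gamma_0(\Bx-\By)=\delta^{-1}\Gamma_0(\tdx-\tdy)$ one gets $\Scal_{D_l}^0[\phi_l](\Bx)=\delta\,\Scal_{\Omega}^0[\tilde\phi_l](\tdx)$ and, differentiating, $(\Kcal_{D_l}^0)^*[\phi_l](\Bx)=(\Kcal_{\Omega}^0)^*[\tilde\phi_l](\tdx)$ — the $0$-order Neumann--Poincar\'e operator is scale invariant. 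This already identifies the diagonal blocks $\Qcal_{D_l}$, modulo the self-interaction contribution coming from $\Pcal_{D_l,\Sigma_c}\Lcal_{\Sigma_c,D_l}^0$; I will show that term is $\Ocal(\delta^2)$ below.

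Next I would treat the off-diagonal blocks $\Qcal_{D_l,D_{l'}}=\Kcal_{D_l,D_{l'}}^0-\Pcal_{D_l,\Sigma_c}\Lcal_{\Sigma_c,D_{l'}}^0$ with $l\neq l'$. Here both $\By\in\p D_{l'}$ and $\Bx\in\p D_l$ range over small sets, and since the anomalies are sparsely distributed we have $|\bz_l-\bz_{l'}|\gg\delta$, so the kernel $\partial_{\nu_\Bx}\Gamma_0(\Bx-\By)$ is smooth on $\p D_l\times\p D_{l'}$; Taylor expanding it about $(\bz_l,\bz_{l'})$ and using $\int_{\p D_{l'}}\phi_{l'}\,ds=0$ (the zero-mean hypothesis $\phi_{l'}\in L_0^2(\p D_{l'})$), the leading term drops out, leaving $\Kcal_{D_l,D_{l'}}^0[\phi_{l'}]=\Ocal(\delta^{?})$ — one must count carefully: the $ds_\By=\delta^2 ds_{\tdy}$ factor, the vanishing of the constant Taylor term, and the extra $\delta$ from $\By-\bz_{l'}=\delta\tdy$ combine to give a factor that is at least $\delta^2$ relative to the natural $L^2(\p D_l)\to L^2(\p D_l)$ normalization. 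Likewise $\Lcal_{\Sigma_c,D_{l'}}^0[\phi_{l'}]$, which integrates $\phi_{l'}$ against a kernel smooth near $\bz_{l'}$ (because $\Bz_{l'}$ is far from $\p\Sigma_c$) over $\p D_{l'}$, is $\Ocal(\delta^2)$ by the same zero-mean cancellation, and the bounded operator $\Pcal_{D_l,\Sigma_c}$ then keeps it $\Ocal(\delta^2)$; the same estimate disposes of the self-interaction term in the diagonal block. Combining, every entry of $\mathbb{M}_D$ except the scale-invariant $(\Kcal_{\Omega}^0)^*$ on the diagonal is $\Ocal(\delta^2)$, which is exactly \eqref{eq:MDd} with $\mathbb{N}_\Omega={\rm diag}((\Kcal_\Omega^0)^*,\ldots,(\Kcal_\Omega^0)^*)$.

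The main obstacle, and the step deserving the most care, is the bookkeeping of the $\delta$-powers under the change of variables together with the choice of function-space norms: the operators act between $L^2(\p D_l)$ spaces whose norms themselves carry $\delta$-dependent factors ($\|\phi_l\|_{L^2(\p D_l)}^2=\delta^2\|\tilde\phi_l\|_{L^2(\p\Omega)}^2$), so ``$\Ocal(\delta^2)$'' must be interpreted as an operator-norm bound after consistently normalizing all the spaces to the reference domain, and one has to check that the asserted invariance of $(\Kcal^0)^*$ and the $\Ocal(\delta^2)$ smallness of the remaining pieces survive this normalization. A secondary technical point is justifying the Taylor expansion of the off-diagonal and $\Pcal\Lcal$ kernels uniformly — this uses precisely the hypotheses ``$D_l$ sparsely distributed'' and ``$\bz_l$ far from $\p\Sigma_s$'' so that all the relevant kernels are analytic on the product sets at a distance bounded below independently of $\delta$. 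Once these uniformities are in hand, the remainder estimate is a routine Cauchy--Schwarz together with the zero-mean cancellation, and the lemma follows.
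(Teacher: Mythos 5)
Your proposal is correct and follows essentially the same route as the paper's proof: rescaling each boundary integral to the reference domain $\Omega$, using the exact scale invariance of $(\Kcal^{0})^{*}$, and killing the leading Taylor term of the smooth (well-separated) kernels in $\Kcal_{D_l,D_{l'}}^0$ and $\Pcal_{D_l,\Sigma_c}\Lcal_{\Sigma_c,D_{l'}}^0$ via the zero-mean hypothesis, exactly as in \eqref{eq:MDlm}--\eqref{eq:KDlm}. Your extra caution about normalizing the $L^2(\p D_l)$ norms is reasonable but does not change the outcome.
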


\begin{proof}
For any $\Bx\in \p D_l$, $\By\in \p D_{l'}$ and $\tdx=\delta^{-1}(\Bx-\Bz_l)$, $\tdy=\delta^{-1}(\By-\Bz_{l'}) \in \p  \Omega$, where $l, l'\in \{1, 2, \ldots, l_0\}$, one can show that
\beq\label{eq:MDlm}
\begin{split}
\Pcal_{D_l, \Sigma_c}\Lcal_{\Sigma_c, D_{l'}}^0[\phi](\Bx)=&\nu_\Bx  \cdot  \nabla_\Bx \times \Acal_{\Sigma_c}^0\Big(-\frac{I}{2}+\Mcal_{\Sigma_c}^0\Big)^{-1}\nu \times \nabla \int_{\p D_{l'}} \Gamma_0(\cdot-\By)\phi(\By)d s_\By \\
=&\delta\nu_{\tdx}  \cdot  \nabla_{\tdx}\times \Acal_{\Sigma_c}^0\Big(-\frac{I}{2}+\Mcal_{\Sigma_c}^0\Big)^{-1}\nu \times \nabla \int_{\p \Omega} \Gamma_0(\cdot-\delta\tdy-\Bz_l)\tilde{\phi}(\tdy)d s_{\tdy}\\
=&\delta\nu_{\tdx}  \cdot  \nabla_{\tdx}\times \Acal_{\Sigma_c}^0\Big(-\frac{I}{2}+\Mcal_{\Sigma_c}^0\Big)^{-1}\nu \times \nabla \Gamma_0(\cdot-\Bz_l)\int_{\p \Omega} \tilde{\phi}_{l'}(\tdy)d s_{\tdy}+\Ocal(\delta^2)\\
=& \Ocal(\delta^2).
\end{split}
\eeq
Similarly one can show that
\beq\label{eq:KDlm}
\Kcal_{D_l, D_{l'}}^0[\phi]=\Ocal(\delta^2), \quad l\neq l', \quad \mbox{and} \quad (\Kcal_{D_{l'}}^0)^*[\phi]=(\Kcal_{\Omega}^0)^*[\tilde\phi]+\Ocal(\delta^2).
\eeq
By substituting \eqnref{eq:MDlm} and \eqnref{eq:KDlm} back into \eqnref{eq:asymtmpMD}, one readily has \eqnref{eq:MDd}.

The proof is complete.
\end{proof}
In what follows, we let $\bH_0^0$ and $\bH^0$ respectively be the leading-order terms of $\bH_0$ and $\bH$ with respect to the low-frequency $\omega$; see \eqref{eq:le01sol01} and \eqref{eq:le02sol01}. By the asymptotic estimate in Lemma \ref{le:smallobj01}, one can derive the following asymptotic property of the densities $\varphi_1^{(0)}, \varphi_2^{(0)}, \ldots, \varphi_{l_0}^{(0)}$ and $\Psi_0^{(0)}$ defined in Lemma \ref{le:asymH01}.
\begin{lem}
Let $D_l$, $l=1, 2, \ldots, l_0$ be given by \eqnref{eq:permeab02}. Let $\varphi_1^{(0)}, \varphi_2^{(0)}, \ldots, \varphi_{l_0}^{(0)}$ and $\Psi_0^{(0)}$ be defined in Lemma \ref{le:asymH01} and $\tilde\varphi_{l}^{(0)}(\tdy):=\varphi_{l}^{(0)}(\By)$, where $\tdy=\delta^{-1}(\By-\Bz_{l}) \in \p  \Omega$, $l=1, 2, \ldots, l_0$. Then there hold the following estimates:
\beq\label{eq:asymPhi02}
\tilde\varphi_{l}^{(0)}=(\varsigma_l I - (\Kcal_\Omega^0)^*)^{-1}[\nu]\cdot\mathbb{H}(\Bz_l) + \Ocal(\delta),\ \ l=1,2,\ldots,l_0,
\eeq
and
\beq\label{eq:asymPhi01}
\begin{split}
\Psi_0^{(0)}=&\Big(-\frac{I}{2}+\Mcal_{\Sigma_c}^0\Big)^{-1}\Big[\nu\times\bH^0|_{\p \Sigma_c}\Big]+\Ocal(\delta^4)+\\
&\delta^3\Big(-\frac{I}{2}+\Mcal_{\Sigma_c}^0\Big)^{-1}\Big[\nu\times \sum_{l'=1}^{l_0}\nabla^2\Gamma_0(\cdot-\Bz_l)\int_{\p \Omega}\tdy(\varsigma_l I - (\Kcal_\Omega^0)^*)^{-1}[\nu](\tdy)d s_{\tdy}\mathbb{H}(\Bz_l)\Big],
\end{split}
\eeq
where $\varsigma_l$ is defined by \eqnref{eq:defsigmal} and $\mathbb{H}$ is defined by
\beq\label{eq:asymPhi03}
\mathbb{H}:=\nabla\times\Acal_{\Sigma_c}^0\left(-\frac{I}{2}+\Mcal_{\Sigma_c}^{0}\right)^{-1}\Big[\nu\times\bH^0|_{\p \Sigma_c}\Big].
\eeq
\end{lem}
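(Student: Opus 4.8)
The strategy is to start from the two integral equations in Lemma~\ref{le:asymH01}, namely \eqnref{eq:le02sol02} for the densities $\varphi_l^{(0)}$ and \eqnref{eq:le02sol03} for $\Psi_0^{(0)}$, rescale the unknowns to the reference domain $\Omega$ via $\tdy=\delta^{-1}(\By-\Bz_l)$, and then apply the size-asymptotic estimates of Lemma~\ref{le:smallobj01} together with Taylor expansion of the fundamental solution $\Gamma_0$ and its derivatives around the centers $\Bz_l$. The first step is to observe that \eqnref{eq:le02sol02} is precisely the system $(\mathrm{diag}(\varsigma_l I) - \mathbb{N}_\Omega\text{-type operator})$ acting on the vector $(\varphi_1^{(0)},\ldots,\varphi_{l_0}^{(0)})^T$ after one subtracts the diagonal pieces; in the notation of \eqnref{eq:asymtmpMD} the cross terms are governed by $\mathbb{M}_D$, so Lemma~\ref{le:smallobj01} applies once we check that the right-hand sides $\Pcal_{D_l,\Sigma_c}[\nu\times\bH^0|_{\p\Sigma_c}]$ have the right structure. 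I would compute $\Pcal_{D_l,\Sigma_c}[\nu\times\bH^0|_{\p\Sigma_c}](\Bx)$ for $\Bx\in\p D_l$ by unwinding the definition: it equals $\nu_\Bx\cdot\nabla_\Bx\times\Acal_{\Sigma_c}^0(-I/2+\Mcal_{\Sigma_c}^0)^{-1}[\nu\times\bH^0|_{\p\Sigma_c}](\Bx)=\nu_\Bx\cdot\mathbb{H}(\Bx)$ with $\mathbb{H}$ as in \eqnref{eq:asymPhi03}, and then Taylor-expand $\mathbb{H}(\Bx)=\mathbb{H}(\Bz_l)+\Ocal(\delta)$ since $\Bx-\Bz_l=\Ocal(\delta)$ and $\mathbb{H}$ is smooth away from $\p\Sigma_c$.

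For the first estimate \eqnref{eq:asymPhi02}, after rescaling, the leading-order system decouples: by Lemma~\ref{le:smallobj01} the off-diagonal coupling operators $\Qcal_{D_l,D_{l'}}$, $l\neq l'$, contribute $\Ocal(\delta^2)$, and the terms $\Pcal_{D_l,\Sigma_c}\Lcal_{\Sigma_c,D_l}^0[\varphi_l^{(0)}]$ are also $\Ocal(\delta^2)$ by \eqnref{eq:MDlm}; hence to leading order $\tilde\varphi_l^{(0)}$ solves $(\varsigma_l I-(\Kcal_\Omega^0)^*)[\tilde\varphi_l^{(0)}]=\nu\cdot\mathbb{H}(\Bz_l)+\Ocal(\delta)$ on $\p\Omega$, which gives \eqnref{eq:asymPhi02} upon inverting (the operator $\varsigma_l I-(\Kcal_\Omega^0)^*$ is invertible on $L^2(\p\Omega)$ since $\mu_l\neq\mu_0$ makes $\varsigma_l\notin[-1/2,1/2]$ in the relevant sense, a standard fact from Neumann--Poincar\'e spectral theory). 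Note also that since $\nu\cdot\mathbb{H}(\Bz_l)$ is of the form $\nu\cdot c$ for a constant vector, one can pull $\mathbb{H}(\Bz_l)$ out and write $(\varsigma_l I-(\Kcal_\Omega^0)^*)^{-1}[\nu]\cdot\mathbb{H}(\Bz_l)$ as in the statement, and the range of this operator is contained in $L_0^2(\p\Omega)$, which legitimizes the use of Lemma~\ref{le:smallobj01} iteratively when one needs the next-order term.

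For the second estimate \eqnref{eq:asymPhi01}, I would substitute the expansion of $\tilde\varphi_{l'}^{(0)}$ from \eqnref{eq:asymPhi02} into \eqnref{eq:le02sol03} and carefully expand $\Lcal_{\Sigma_c,D_{l'}}^0[\varphi_{l'}^{(0)}](\Bx)=\nu_\Bx\times\nabla_\Bx\int_{\p D_{l'}}\Gamma_0(\Bx-\By)\varphi_{l'}^{(0)}(\By)\,ds_\By$ for $\Bx\in\p\Sigma_c$. Rescaling $\By=\delta\tdy+\Bz_{l'}$ introduces a factor $\delta^2$ from the surface element, and then Taylor-expanding $\Gamma_0(\Bx-\delta\tdy-\Bz_{l'})=\Gamma_0(\Bx-\Bz_{l'})-\delta\tdy\cdot\nabla\Gamma_0(\Bx-\Bz_{l'})+\Ocal(\delta^2)$: the zeroth-order term vanishes because $\int_{\p\Omega}\tilde\varphi_{l'}^{(0)}\,ds_{\tdy}=0$ (mean-zero property), so the leading surviving contribution is the $\delta^3$ term $-\delta^3\nu_\Bx\times\nabla_\Bx[\nabla\Gamma_0(\Bx-\Bz_{l'})\cdot\int_{\p\Omega}\tdy\,\tilde\varphi_{l'}^{(0)}(\tdy)\,ds_{\tdy}]$, and inserting $\tilde\varphi_{l'}^{(0)}=(\varsigma_l I-(\Kcal_\Omega^0)^*)^{-1}[\nu]\cdot\mathbb{H}(\Bz_l)+\Ocal(\delta)$ produces exactly the $\delta^3$ correction term displayed in \eqnref{eq:asymPhi01}, with the remainder being $\Ocal(\delta^4)$ after applying the bounded operator $(-I/2+\Mcal_{\Sigma_c}^0)^{-1}$. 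The main obstacle I anticipate is bookkeeping the orders correctly: one must verify that the next term in the Taylor expansion of $\Gamma_0$ (the Hessian term) combined with the $\Ocal(\delta)$ error in $\tilde\varphi_{l'}^{(0)}$ genuinely only contributes at order $\delta^4$, and that the far-field separation hypothesis $\Bx-\Bz_l\gg\delta$ (assumed just before \eqnref{eq:asymtmpMD}) uniformly controls the derivatives of $\Gamma_0$ so the expansion is valid with constants independent of $l$; this requires the sparsity and distance assumptions on the $\Bz_l$ to be invoked explicitly.
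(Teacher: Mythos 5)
Your proposal follows essentially the same route as the paper's proof: rewrite \eqnref{eq:le02sol02} as $(\varsigma-\mathbb{M}_{D})[\varphi]=\mathbf{H}_c$, identify $\Pcal_{D_l,\Sigma_c}[\nu\times\bH^0|_{\p\Sigma_c}]=\nu\cdot\mathbb{H}$ and Taylor-expand about $\Bz_l$ to obtain \eqnref{eq:asymPhi02} via Lemma \ref{le:smallobj01}, then rescale $\Lcal^0_{\Sigma_c,D_{l'}}[\varphi^{(0)}_{l'}]$ on $\p\Sigma_c$ and use the vanishing mean of the density to remove the $\delta^2$ term, which gives \eqnref{eq:asymPhi01}. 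The one step you assert rather than prove is the exact identity $\int_{\p D_{l'}}\varphi^{(0)}_{l'}\,ds=0$: your observation that $(\varsigma_l I-(\Kcal^0_\Omega)^*)^{-1}[\nu]$ lies in $L^2_0(\p\Omega)$ only controls the leading-order term, and an $\Ocal(\delta)$ residual mean would contaminate the $\delta^3$ coefficient in \eqnref{eq:asymPhi01} with an extra $\nabla\Gamma_0(\cdot-\Bz_l)$ contribution; the paper gets the exact statement by integrating \eqnref{eq:le02sol02} over $\p D_l$, noting that all terms except $(\Kcal^0_{D_l})^*$ integrate to zero, using $\Kcal^0_{D_l}[1]=1/2$ and $\varsigma_l\neq\frac12$, and you should include this short computation to make the argument complete.
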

\begin{proof}
First, by using \eqnref{eq:asymtmpMD} one can rewrite \eqnref{eq:le02sol02} in the following form,
\beq\label{eq:le02sol04}
(\varsigma -\mathbb{M}_{D})[\varphi]=\mathbf{H}_c,
\eeq
where $\varsigma:={\rm diag}(\varsigma_1, \varsigma_2, \ldots, \varsigma_{l_0})$ and
$$\mathbf{H}_c:=(\Pcal_{D_1, \Sigma_c}[\nu\times\bH^0|_{\p \Sigma_c}], \Pcal_{D_2, \Sigma_c}[\nu\times\bH^0|_{\p \Sigma_c}], \ldots, \Pcal_{D_{l_0}, \Sigma_c}[\nu\times\bH^0|_{\p \Sigma_c}])^T.$$
We shall analyze the right hand side term in \eqnref{eq:le02sol04}. By definition,
\beq\label{eq:appen0101}
\Pcal_{D_l, \Sigma_c}[\nu\times\bH^0|_{\p \Sigma_c}]=\nu\cdot\nabla\times\Acal_{\Sigma_c}^0\left(-\frac{I}{2}+\Mcal_{\Sigma_c}^{0}\right)^{-1}\Big[\nu\times\bH^0|_{\p \Sigma_c}\Big] \quad \mbox{on} \quad \p D_l.
\eeq
which together with the Taylor series expansion of $\mathbb{H}$ on $\Bz_l$, i.e.,
\beq\label{eq:appen0104}
\mathbb{H}(\By)=\mathbb{H}(\Bz_l)+\nabla \mathbb{H}(\Bz_l)(\By-\Bz_l)+\Ocal(\|\By-\Bz_l\|^2), \quad \By\in \p D_l,
\eeq
readily yields that
\beq\label{eq:appen0105}
\Pcal_{D_l, \Sigma_c}[\nu\times\bH^0|_{\p \Sigma_c}]=\nu\cdot\mathbb{H}(\Bz_l)+ \Ocal(\delta) \quad \mbox{on} \quad \p \Omega.
\eeq
By combing \eqnref{eq:MDd}, \eqnref{eq:lesmobj0102}, \eqnref{eq:le02sol04} and \eqnref{eq:appen0105}, one thus has \eqnref{eq:asymPhi02}.
Next, by taking integration on both sides of \eqnref{eq:le02sol02} over $\p D_l$ and using integration by parts, one can derive that
\beq\label{eq:main01pf02}
\int_{\p D_l}\left(\varsigma_l I -(\Kcal_{D_l}^0)^*+\Pcal_{D_l, \Sigma_c}\Lcal_{\Sigma_c, D_l}^0\right)[\varphi_l^{(0)}]ds=(\varsigma_l-\frac{1}{2})\int_{\p D_l}\varphi_l^{(0)}ds=0,
\eeq
where the well-known result $\Kcal_{D_l}^0[1]=1/2$ is used (see, e.g., \cite{HK07:book}). Note that $(\varsigma_l-\frac{1}{2})\neq 0$, one thus has
\beq\label{eq:main01pf03}
\int_{\p D_l}\varphi_l^{(0)}ds=0, \quad l=1, 2, \ldots, l_0.
\eeq
By using \eqnref{eq:MDlm}, one can further show that for $\Bx\in \p \Sigma_c$,
\beq\label{eq:appen0107}
\begin{split}
\Lcal_{\Sigma_c, D_{l'}}^{0}[\varphi_{l'}^{(0)}](\Bx)=&\nu \times \nabla \int_{\p D_{l'}} \Gamma_0(\Bx-\By)\varphi_{l'}^{(0)}(\By)d s_\By  \\
&=\delta^2\nu \times \nabla \int_{\p \Omega} \Gamma_0(\Bx-\delta\tdy-\Bz_l)\tilde\varphi_{l'}^{(0)}(\tdy)d s_{\tdy}\\
&=-\delta^3\nu\times \nabla^2 \Gamma_0(\Bx-\Bz_l)\int_{\p \Omega}\tdy\tilde\varphi_{l'}^{(0)}(\tdy)d s_{\tdy}+\Ocal(\delta^4).
\end{split}
\eeq
Thus from \eqnref{eq:le02sol03} one immediately proves \eqnref{eq:asymPhi01}, which completes the proof.
\end{proof}

We are in a position to present our first main result.

\begin{thm}\label{th:main01}
Suppose $\bH_0$ and $\bH$ are solutions to \eqnref{eq:frequen01} and \eqnref{eq:frequen02}, respectively. Suppose $D_l$, $l=1, 2, \ldots, l_0$ satisfy \eqnref{eq:permeab02}, and let $\bH_0^0$ and $\bH^0$ be the leading-order terms with respect to $\omega\ll 1$ of $\bH_0$ and $\bH$, respectively. Then for $\Bx\in \RR^3\setminus\overline{\Sigma}$, there holds the following asymptotic expansion:
\beq\label{eq:thmain01n}
\begin{split}
(\bH^0-\bH_0^0)(\Bx)=&\nabla\times\Acal_{\Sigma_c}^0\left(-\frac{I}{2}+\Mcal_{\Sigma_c}^{0}\right)^{-1}\Big[\nu\times(\bH^0-\bH_0^0)|_{\p \Sigma_c}^+\Big](\Bx)\\
&+\delta^3\nabla\Scal_{\Sigma_c}^0\left(-\frac{I}{2}+(\Kcal_{\Sigma_c}^{0})^*\right)^{-1}\Big[\nu\cdot \nabla^2 \sum_{l=1}^{l_0}\Gamma_0(\cdot-\Bz_l)\mathbf{M}_l\mathbb{H}(\Bz_l)\Big](\Bx)\\
&-\delta^3\sum_{l=1}^{l_0}\nabla^2 \Gamma_0(\Bx-\Bz_l)\mathbf{M}_l\mathbb{H}(\Bz_l)+\Ocal(\delta^4),
\end{split}
\eeq
where $\mathbf{M}_l$, $l=1, 2, \ldots, l_0$, are the polarization tensors defined by
\beq\label{eq:thmain02}
\mathbf{M}_l:=\int_{\p \Omega} \By (\varsigma_l I - (\Kcal_\Omega^0)^*)^{-1}[\nu](\By) ds_\By,
\eeq
and $\varsigma_l$ and $\mathbb{H}$ are respectively defined in \eqnref{eq:defsigmal} and \eqnref{eq:asymPhi03}.
\end{thm}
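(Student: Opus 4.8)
The plan is to substitute the two integral representations already at hand---Lemma~\ref{le:asymH01} for $\bH^0$ (so that $\bH^0=\nabla\times\Acal_{\Sigma_c}^0[\Psi_0^{(0)}]+\nabla\sum_{l}\Scal_{D_{l}}^{0}[\varphi_{l}^{(0)}]$) and Lemma~\ref{le:asymH001} for $\bH_0^0$---together with the small-anomaly expansions \eqref{eq:asymPhi02} and \eqref{eq:asymPhi01} of the densities, and then to collect terms by their order in $\delta$. First I would treat the anomaly part: using the scaling \eqref{eq:permeab02}, the substitution $\tdy=\delta^{-1}(\By-\Bz_l)\in\p\Omega$, the mean-zero property $\int_{\p D_l}\varphi_l^{(0)}\,ds=0$ from \eqref{eq:main01pf03}, and a Taylor expansion of $\Gamma_0(\Bx-\delta\tdy-\Bz_l)$ about $\Bx-\Bz_l$ (legitimate since $\Bx-\Bz_l\gg\delta$ for $\Bx\in\RR^3\setminus\overline{\Sigma}$), one gets
\[
\nabla\Scal_{D_l}^0[\varphi_l^{(0)}](\Bx)=-\delta^{3}\,\nabla^2\Gamma_0(\Bx-\Bz_l)\!\int_{\p\Omega}\!\tdy\,\tilde\varphi_l^{(0)}(\tdy)\,ds_{\tdy}+\Ocal(\delta^{4}).
\]
Inserting \eqref{eq:asymPhi02} and recalling the definition \eqref{eq:thmain02}, the surface integral equals $\mathbf{M}_l\mathbb{H}(\Bz_l)+\Ocal(\delta)$, so summing over $l$ reproduces exactly the third term $-\delta^{3}\sum_l\nabla^2\Gamma_0(\Bx-\Bz_l)\mathbf{M}_l\mathbb{H}(\Bz_l)$ of \eqref{eq:thmain01n}.

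Next I would apply $\nabla\times\Acal_{\Sigma_c}^0$ to \eqref{eq:asymPhi01} and subtract the representation \eqref{eq:le01sol01} of $\bH_0^0$. Writing $\bG(\Bx):=\sum_l\nabla^2\Gamma_0(\Bx-\Bz_l)\mathbf{M}_l\mathbb{H}(\Bz_l)$ and using \eqref{eq:thmain02} once more, the $\delta^3$-term in \eqref{eq:asymPhi01} becomes $\delta^3(-\frac{I}{2}+\Mcal_{\Sigma_c}^0)^{-1}[\nu\times\bG]$, so
\[
\bH^0-\bH_0^0=\nabla\times\Acal_{\Sigma_c}^0\Big(-\frac{I}{2}+\Mcal_{\Sigma_c}^0\Big)^{-1}\!\big[\nu\times(\bH^0-\bH_0^0)|_{\p\Sigma_c}^+\big]+\delta^{3}\,\nabla\times\Acal_{\Sigma_c}^0\Big(-\frac{I}{2}+\Mcal_{\Sigma_c}^0\Big)^{-1}[\nu\times\bG]-\delta^{3}\bG+\Ocal(\delta^{4}).
\]
This already gives the first and third terms of \eqref{eq:thmain01n}, so the whole proof is reduced to the identity, valid in $\RR^3\setminus\overline{\Sigma}$,
\[
\nabla\times\Acal_{\Sigma_c}^0\Big(-\frac{I}{2}+\Mcal_{\Sigma_c}^0\Big)^{-1}[\nu\times\bG]=\nabla\Scal_{\Sigma_c}^0\Big(-\frac{I}{2}+(\Kcal_{\Sigma_c}^0)^*\Big)^{-1}[\nu\cdot\bG].
\]

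To prove this identity I would exploit that $\bG=\nabla g$ with $g(\Bx)=\sum_l\nabla\Gamma_0(\Bx-\Bz_l)\cdot\mathbf{M}_l\mathbb{H}(\Bz_l)$, which is harmonic in a full neighbourhood of $\overline{\Sigma_c}$ because the $\Bz_l\in\Sigma_s$ lie far from $\p\Sigma_c$; in particular $\nu\cdot\bG=\partial_\nu g$ is single-valued on $\p\Sigma_c$ and $\int_{\p\Sigma_c}\nu\cdot\bG=\int_{\Sigma_c}\Delta g=0$, so $(-\frac{I}{2}+(\Kcal_{\Sigma_c}^0)^*)^{-1}$ makes sense on the mean-zero subspace $L^2_0(\p\Sigma_c)$ and produces a mean-zero density $\phi$. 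Let $\mathbf{E}$ be the difference of the two sides in $\RR^3\setminus\overline{\Sigma_c}$; it is divergence-free, annihilated by $\nabla\times\nabla\times$, and $\Ocal(\|\Bx\|^{-2})$ at infinity. By the jump formula \eqref{jumpM} the tangential trace of the left side on $\p\Sigma_c$ equals $\nu\times\bG$; for the right side, $u:=\Scal_{\Sigma_c}^0[\phi]$ satisfies $\partial_\nu u|_{\p\Sigma_c}^-=\partial_\nu g$ by \eqref{eq:trace}, hence $u=g+\mathrm{const}$ in $\Sigma_c$ by uniqueness of the interior Neumann problem, and continuity of the single layer then gives $u|_{\p\Sigma_c}^+=g|_{\p\Sigma_c}+\mathrm{const}$, whence $\nu\times\nabla u|_{\p\Sigma_c}^+=\nu\times\nabla g=\nu\times\bG$ as well, so $\nu\times\mathbf{E}|_{\p\Sigma_c}^+=0$. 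A decaying, divergence-free field killed by $\nabla\times\nabla\times$ in $\RR^3\setminus\overline{\Sigma_c}$ with vanishing tangential trace on $\p\Sigma_c$ is necessarily a constant multiple of $\nabla P$, where $P$ is the equilibrium potential of $\Sigma_c$ ($P\equiv1$ on $\p\Sigma_c$, harmonic and decaying outside); since the left side, being a curl, has vanishing flux through the closed surface $\p\Sigma_c$, while the right side has flux $\int_{\p\Sigma_c}\phi=0$ and $\nabla P$ has flux $-\mathrm{cap}(\Sigma_c)\neq0$, that constant must vanish and $\mathbf{E}\equiv0$. Assembling the three steps and absorbing all lower-order contributions into $\Ocal(\delta^4)$ yields \eqref{eq:thmain01n}. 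I expect the last step---and within it the normalization by which the additive constant is forced to zero via flux/charge neutrality---to be the main point requiring care, the remainder being a routine scaling-and-Taylor argument resting on the lemmas already established.
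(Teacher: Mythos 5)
Your proposal is correct and follows essentially the same route as the paper's proof: you expand the anomaly single-layer term $\nabla\sum_{l}\Scal_{D_l}^0[\varphi_l^{(0)}]$ via the scaling, Taylor expansion and the mean-zero property of $\varphi_l^{(0)}$ (exactly the paper's computation producing $-\delta^3\sum_l\nabla^2\Gamma_0(\cdot-\Bz_l)\mathbf{M}_l\mathbb{H}(\Bz_l)$), substitute the expansion \eqref{eq:asymPhi01} of $\Psi_0^{(0)}$ and subtract the representation \eqref{eq:le01sol01} of $\bH_0^0$, and finally convert the $\delta^3$ boundary contribution from the tangential (vectorial single layer) form to the normal (scalar single layer) form, which is precisely \eqref{eq:eq:main01pf05}. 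The only divergence is that the paper imports this transfer identity from Lemma 3.6 of \cite{DLL:18}, whereas you prove it directly by an exterior uniqueness argument (reduction to a multiple of the equilibrium potential, then killing the constant by the flux/charge-neutrality count); this is a valid self-contained substitute for the citation.
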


Before presenting the proof of Theorem \ref{th:main01}, we remark that in the formula \eqnref{eq:thmain01n}, the asymptotic expansion of $\bH^0-\bH_0^0$ contains the term $\bH^0-\bH_0^0$ on $\p \Sigma_c$ (in the first term of the right hand side of \eqnref{eq:thmain01n}), which is not directly given. However, according to our earlier discussion about the data continuation, one actually can continue the measurement data on $\Gamma$ to $\Sigma_c$. Nevertheless, in the next section, we shall perform further asymptotic analysis to show that  this data continuation step can actually be relaxed.

\begin{proof}[Proof of Theorem~\ref{th:main01}]
By using Taylor's series expansion, \eqnref{eq:asymPhi02} and \eqnref{eq:main01pf03}, one can show that
\beq\label{eq:main01pf04}
\begin{split}
\nabla\sum_{l'=1}^{l_0}\Scal_{D_{l'}}^{0}[\varphi_{l'}^{(0)}](\Bx)=&\sum_{l'=1}^{l_0}\sum_{|\alpha|=0}^{\infty}\frac{(-1)^{|\alpha|}}{\alpha!}\nabla\int_{\p D_l'}\p^{\alpha}\Gamma_0(\Bx-\Bz_l)(\By-\Bz_l)^{\alpha}\varphi_{l'}^{(0)}(\By)ds_\By\\
=&-\delta^3\sum_{l'=1}^{l_0}\nabla^2\Gamma_0(\Bx-\Bz_l)\int_{\p \Omega} \tdy \tilde\varphi_{l'}^{(0)}(\tdy)ds_{\tdy}+\Ocal(\delta^4)\\
=&-\delta^3\sum_{l'=1}^{l_0}\nabla^2\Gamma_0(\Bx-\Bz_l)\int_{\p \Omega} \tdy (\varsigma_l I - (\Kcal_\Omega^0)^*)^{-1}[\nu](\tdy)ds_{\tdy}\mathbb{H}(\Bz_l)+\Ocal(\delta^4).
\end{split}
\eeq
By using Lemma 3.6 in \cite{DLL:18}, there holds
\beq\label{eq:eq:main01pf05}
\begin{split}
&\nabla\times\Acal_{\Sigma_c}^0\left(-\frac{I}{2}+\Mcal_{\Sigma_c}^{0}\right)^{-1}\Big[\nu\times \nabla^2 \sum_{l=1}^{l_0}\Gamma_0(\cdot-\Bz_l)\mathbf{M}_l\mathbb{H}(\Bz_l)\Big](\Bx)\\
=&\nabla\Scal_{\Sigma_c}^0\left(-\frac{I}{2}+(\Kcal_{\Sigma_c}^{0})^*\right)^{-1}\Big[\nu\cdot \nabla^2 \sum_{l=1}^{l_0}\Gamma_0(\cdot-\Bz_l)\mathbf{M}_l\mathbb{H}(\Bz_l)\Big](\Bx), \quad \Bx\in \RR^3\setminus\overline{\Sigma_c}.
\end{split}
\eeq
Finally, by combing \eqnref{eq:le01sol01}, \eqnref{eq:le02sol01} and using \eqnref{eq:asymPhi01}, \eqnref{eq:main01pf04}, \eqnref{eq:eq:main01pf05} one can derive \eqref{eq:thmain01}, which completes the proof.
\end{proof}

\section{Further asymptotic analysis}
In this section, we conduct further asymptotic analysis on the leading-order term in \eqnref{eq:thmain01n}. As remarked earlier, in \eqnref{eq:thmain01}, one needs to derive the magnetic field on $\p \Sigma_c$ through the continuation of the measurement data on $\Gamma$. It is well-known that the data continuation is severely ill-conditioned. We next show that this data continuation can actually be relaxed. The result in this section may find important application in the practical realization of our theoretical findings. Before that, we first present an important axillary result.

\begin{lem}[Lemma 3.9 in \cite{DLL:18}]\label{le:04}
Let $\bZ\in \RR^3$ be fixed. Let $\Bx\in \p B_{R}$ and suppose $\|\Bz\|<R$. There holds the following asymptotic expansion
\beq\label{eq:leasm02}
\nabla \Gamma_0(\Bx-\bZ)=\sum_{n=0}^{\infty}\sum_{m=-n}^{n}
\frac{(n+1)Y_n^m(\hat\Bx)\hat\Bx-\nabla_SY_n^m(\hat\Bx)}{(2n+1)R^{n+2}}\overline{Y_n^m(\hat{\bZ})}\|\bZ\|^{n},
\eeq
where
$\hat\bZ=\bZ/\|\bZ\|$ and $\hat\Bx=\Bx/\|\Bx\|$. $Y_n^m$ is the spherical harmonics of order $m$ and degree $n$.
\end{lem}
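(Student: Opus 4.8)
The statement to be proved is Lemma~3.9-type asymptotics: the multipole expansion of $\nabla\Gamma_0(\Bx - \bZ)$ on a large sphere $\p B_R$ in terms of spherical harmonics, valid when $\|\bZ\| < R$. The plan is to start from the classical addition theorem (Gegenbauer / Laplace expansion) for the Newtonian kernel. Recall that for $\|\bZ\| < \|\Bx\| = R$ one has
\beq
\Gamma_0(\Bx - \bZ) = -\frac{1}{4\pi\|\Bx - \bZ\|} = -\frac{1}{4\pi}\sum_{n=0}^{\infty}\frac{\|\bZ\|^n}{\|\Bx\|^{n+1}}\,P_n(\hat\Bx\cdot\hat\bZ),
\eeq
where $P_n$ is the Legendre polynomial, and then invoke the spherical harmonic addition formula $P_n(\hat\Bx\cdot\hat\bZ) = \frac{4\pi}{2n+1}\sum_{m=-n}^{n} Y_n^m(\hat\Bx)\overline{Y_n^m(\hat\bZ)}$ to rewrite
\beq
\Gamma_0(\Bx - \bZ) = -\sum_{n=0}^{\infty}\sum_{m=-n}^{n}\frac{1}{(2n+1)}\frac{\|\bZ\|^n}{\|\Bx\|^{n+1}}\,Y_n^m(\hat\Bx)\,\overline{Y_n^m(\hat\bZ)}.
\eeq

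The next step is to apply $\nabla_\Bx$ to this series, restricting to $\Bx\in\p B_R$. Here I would separate the gradient into its radial and tangential (surface) parts using $\nabla = \hat\Bx\,\p_r + \frac{1}{r}\nabla_S$, where $\nabla_S$ is the surface gradient on the unit sphere. The radial derivative acts only on the factor $\|\Bx\|^{-(n+1)} = r^{-(n+1)}$, producing $-(n+1)r^{-(n+2)}$, which at $r = R$ gives the $(n+1)\,\hat\Bx/R^{n+2}$ contribution; the tangential derivative acts only on $Y_n^m(\hat\Bx)$, producing $\frac{1}{R}\cdot\frac{1}{R^{n+1}}\nabla_S Y_n^m(\hat\Bx)$, i.e.\ the $-\nabla_S Y_n^m(\hat\Bx)/R^{n+2}$ term after accounting for the overall sign. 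Combining these two pieces and factoring out $1/[(2n+1)R^{n+2}]$ yields exactly \eqref{eq:leasm02}.

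The one genuine technical point — and the main obstacle, such as it is — is justifying the termwise differentiation of the series. Since we require $\|\bZ\| < R$ strictly, the series for $\Gamma_0(\Bx-\bZ)$ converges uniformly for $\Bx$ in a neighborhood of $\p B_R$ (indeed on any annulus $\{\|\bZ\| + \epsilon \le \|\Bx\|\}$), and the differentiated series is similarly dominated using the standard bounds $|Y_n^m(\hat\Bx)| \lesssim n$ and $|\nabla_S Y_n^m(\hat\Bx)| \lesssim n^2$ together with the geometric decay $(\|\bZ\|/R)^n$; hence the differentiated series converges uniformly and equals $\nabla_\Bx\Gamma_0(\Bx-\bZ)$ by the standard theorem on differentiating series of functions. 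Everything else is bookkeeping. Since this is quoted verbatim as Lemma~3.9 of \cite{DLL:18}, the cleanest route in the paper is simply to cite that reference; if a self-contained argument is wanted, the three steps above (Laplace expansion, addition theorem, split gradient into radial/tangential parts and differentiate termwise) constitute the complete proof.
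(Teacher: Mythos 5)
Your derivation is correct: the Laplace expansion of the Newtonian kernel for $\|\bZ\|<\|\Bx\|$, the addition theorem $P_n(\hat\Bx\cdot\hat\bZ)=\frac{4\pi}{2n+1}\sum_m Y_n^m(\hat\Bx)\overline{Y_n^m(\hat\bZ)}$, and the splitting $\nabla=\hat\Bx\,\p_r+\frac1r\nabla_S$ applied termwise (justified by uniform convergence on an annulus $\|\Bx\|\ge\|\bZ\|+\epsilon$) reproduce \eqref{eq:leasm02} exactly, including signs. The paper itself gives no proof of this lemma --- it is quoted verbatim as Lemma~3.9 of \cite{DLL:18} --- and your argument is the standard multipole-expansion derivation underlying that cited result, so citing the reference or including your three-step computation are both acceptable; the only cosmetic remark is that your pointwise bounds $|Y_n^m|\lesssim n$, $|\nabla_S Y_n^m|\lesssim n^2$ are cruder than the sharp ones, but any polynomial growth is harmless against the geometric factor $(\|\bZ\|/R)^n$.
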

In the sequel, we define
\beq\label{eq:leax04}
\mathbf{N}_{n+1}^{m}(\hat\Bx)=(n+1)Y_n^m(\hat\Bx)\hat\Bx-\nabla_SY_n^m(\hat\Bx),
\eeq
for $n\in \mathbb{N}\cup \{0\}$ and $m=-n, -n+1, \ldots, n-1, n$, and
\beq\label{eq:leftasmp01}
\mathbf{Q}^m_{n-1}(\hat\Bx):= \nabla_{S} Y^m_{n}(\hat{\Bx}) +n  Y^m_{n}(\hat{\Bx}) \hat\Bx, \quad \mathbf{T}_n^m(\hat\Bx):=\nabla_S Y_n^m(\hat\Bx)\times \hat\Bx
\eeq
for $n\in \mathbb{N}$ and $m=-n, -n+1, \ldots, n-1, n$. Note that $\mathbf{N}_{n+1}^m$, $\mathbf{Q}^m_{n-1}(\hat\Bx)$ and $\mathbf{T}_n^m(\hat\Bx)$ are spherical harmonics of order $n$.
Rewrite the formula \eqnref{eq:thmain01n} as
\beq\label{eq:thmain01}
(\bH^0-\bH_0^0)(\Bx)=\tilde\bH_1(\Bx)+\delta^3\tilde\bH_2(\Bx)-\delta^3\tilde\bH_3(\Bx)+\Ocal(\delta^4),
\eeq
where
$$
\widetilde\bH_1(\Bx):=\nabla\times\Acal_{\Sigma_c}^0\left(-\frac{I}{2}+\Mcal_{\Sigma_c}^{0}\right)^{-1}\Big[\nu\times(\bH^0-\bH_0^0)|_{\p \Sigma_c}^+\Big](\Bx),
$$
and
$$
\widetilde\bH_2(\Bx):=\nabla\Scal_{\Sigma_c}^0\left(-\frac{I}{2}+(\Kcal_{\Sigma_c}^{0})^*\right)^{-1}\Big[\nu\cdot \nabla^2 \sum_{l=1}^{l_0}\Gamma_0(\cdot-\Bz_l)\mathbf{M}_l\mathbb{H}(\Bz_l)\Big](\Bx),
$$
and
$$
\widetilde\bH_3(\Bx):=\sum_{l=1}^{l_0}\nabla^2 \Gamma_0(\Bx-\Bz_l)\mathbf{M}_l\mathbb{H}(\Bz_l).
$$
We next analyze the above terms one by one. First, by using Lemma \ref{le:04}, one can show that
\beq\label{eq:newassmp01}
\begin{split}
\widetilde\bH_1(\Bx)=&\int_{\p \Sigma_c}\nabla_\Bx\Gamma_0(\Bx-\By)\times \left(-\frac{I}{2}+\Mcal_{\Sigma_c}^{0}\right)^{-1}\Big[\nu\times(\bH^0-\bH_0^0)|_{\p \Sigma_c}^+\Big](\By)ds_\By\\
=&\sum_{n=0}^{\infty}\sum_{m=-n}^{n}\frac{1}{(2n+1)\|\Bx\|^{n+2}}\mathbf{N}_{n+1}^{m}(\hat\Bx)\times \\
&\int_{\p \Sigma_c}\overline{Y_n^m(\hat{\By})}\|\By\|^{n}\left(-\frac{I}{2}+\Mcal_{\Sigma_c}^{0}\right)^{-1}\Big[\nu\times(\bH^0-\bH_0^0)|_{\p \Sigma_c}^+\Big](\By)ds_\By\\
=& \frac{1}{3}\|\Bx\|^{-3}\sum_{m=-1}^{1}\mathbf{N}_{2}^{m}(\hat\Bx)\times\int_{\p \Sigma_c}\overline{Y_1^m(\hat{\By})}\|\By\|\widetilde\bH_1^{1}(\By)ds_\By+ \Ocal(\|\Bx\|^{-4}),
\end{split}
\eeq
where
\beq\label{eq:newassmp02}
\widetilde\bH_1^{1}(\By):=\left(-\frac{I}{2}+\Mcal_{\Sigma_c}^{0}\right)^{-1}\Big[\nu\times(\bH^0-\bH_0^0)|_{\p \Sigma_c}^+\Big](\By).
\eeq
Similarly, one can show that
\beq\label{eq:newassmp03}
\begin{split}
\widetilde\bH_2(\Bx)=&\int_{\p \Sigma_c}\nabla_\Bx\Gamma_0(\Bx-\By)\left(-\frac{I}{2}+(\Kcal_{\Sigma_c}^{0})^*\right)^{-1}\Big[\nu\cdot \nabla^2 \sum_{l=1}^{l_0}\Gamma_0(\cdot-\Bz_l)\mathbf{M}_l\mathbb{H}(\Bz_l)\Big](\By)ds_\By\\
=&\sum_{n=0}^{\infty}\sum_{m=-n}^{n}\frac{1}{(2n+1)\|\Bx\|^{n+2}}\mathbf{N}_{n+1}^{m}(\hat\Bx) \\
&\left(-\frac{I}{2}+(\Kcal_{\Sigma_c}^{0})^*\right)^{-1}\Big[\nu\cdot \nabla^2 \sum_{l=1}^{l_0}\Gamma_0(\cdot-\Bz_l)\mathbf{M}_l\mathbb{H}(\Bz_l)\Big](\By)ds_\By\\
=& \frac{1}{3}\|\Bx\|^{-3}\sum_{m=-1}^{1}\mathbf{N}_{2}^{m}(\hat\Bx)\int_{\p \Sigma_c}\overline{Y_1^m(\hat{\By})}\|\By\|\widetilde{H}_2^{1}(\By)ds_\By+ \Ocal(\|\Bx\|^{-4}),
\end{split}
\eeq
where
\beq\label{eq:newassmp04}
\widetilde{H}_2^{1}(\By):=\left(-\frac{I}{2}+(\Kcal_{\Sigma_c}^{0})^*\right)^{-1}\Big[\nu\cdot \nabla^2 \sum_{l=1}^{l_0}\Gamma_0(\cdot-\Bz_l)\mathbf{M}_l\mathbb{H}(\Bz_l)\Big](\By).
\eeq
To analyze $\widetilde\bH_3(\Bx)$, we first note that by differentiating both sides of \eqnref{eq:leasm02}, there holds
\beq\label{eq:leasm03}
\nabla^2 \Gamma_0(\Bx-\bZ)=\sum_{n=0}^{\infty}\sum_{m=-n}^{n}
\frac{\mathbf{A}_n^m(\hat\Bx)}{(2n+1)\|\Bx\|^{n+3}}\overline{Y_n^m(\hat{\bZ})}\|\bZ\|^{n},
\eeq
where
\beq\label{eq:leasm04}
\mathbf{A}_n^m(\hat\Bx):=(n+1)\big(\hat\Bx\nabla_SY_n^m(\hat\Bx)^T+Y_n^m(\hat\Bx)(I-\hat\Bx\hat\Bx^T)\big)-\nabla_S^2Y_n^m(\hat\Bx)-(n+2)\mathbf{N}_{n+1}^{m}(\hat\Bx)\hat\Bx^T,
\eeq
and especially when $n=0$,
\beq
\mathbf{A}_0^0(\hat\Bx)=Y_0^0(\hat\Bx)(I-3\hat\Bx\hat\Bx^T)=\frac{1}{2}\sqrt{\frac{1}{\pi}}(I-3\hat\Bx\hat\Bx^T).
\eeq
Thus one can derive that
\beq\label{eq:newassmp05}
\begin{split}
\widetilde\bH_3(\Bx)=&\sum_{l=1}^{l_0}\sum_{n=0}^{\infty}\sum_{m=-n}^{n}
\frac{\overline{Y_n^m(\hat{\bZ_l})}\|\bZ_l\|^{n}}{(2n+1)\|\Bx\|^{n+3}}\mathbf{A}_n^m(\hat\Bx)\mathbf{M}_l\mathbb{H}(\Bz_l) \\
=&\frac{1}{4\pi\|\Bx\|^{3}}\sum_{l=1}^{l_0} (I-3\hat\Bx\hat\Bx^T)\mathbf{M}_l\mathbb{H}(\Bz_l)+ \Ocal(\|\Bx\|^{-4}).
\end{split}
\eeq

Using the above results, we can further establish the following lemma.

\begin{lem}\label{le:appfur01}
Let $\mathbb{H}$ and $\mathbf{M}_l$ be defined in \eqnref{eq:asymPhi03} and \eqnref{eq:thmain02}, respectively. Then there holds
\beq\label{eq:lenewassmp01}
\frac{\delta^3}{\|\Bx\|^3}\Big(\overline{\mathbf{D}}_0\sum_{l=1}^{l_0}\mathbf{M}_l\mathbb{H}(\Bz_l)+
\Ocal(\|\Bx\|^{-2})\Big)
=\int_{\mathbb{S}}\overline{\mathbf{Q}_0(\hat\Bx)}(\bH^0-\bH_0^0)(\Bx)ds+\Ocal(\delta^4),
\eeq
where $\mathbf{D}_0$ and $\mathbf{Q}_0(\hat\Bx)$ is a $3$-by-$3$ matrix given by
\beq\label{eq:lenewassmp02}
\mathbf{D}_0:=-\frac{1}{6}\Big(\mathbf{a}_{0,1}^{0,-1},\mathbf{a}_{0,1}^{0,0},\mathbf{a}_{0,1}^{0,1}\Big)^T,
\eeq
and
\beq\label{eq:lenewassmp03}
\mathbf{Q}_0(\hat\Bx):=\Big(\mathbf{Q}_0^{-1}(\hat\Bx),\mathbf{Q}_0^{0}(\hat\Bx),\mathbf{Q}_0^{1}(\hat\Bx)\Big)^T,
\eeq
respectively. $\mathbf{a}_{n',n}^{m',m}$ is defined in \eqnref{eq:app03def09}.
\end{lem}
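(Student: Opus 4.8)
The plan is to feed the expansion \eqref{eq:thmain01} of $\bH^0-\bH_0^0$ into the right-hand side of \eqref{eq:lenewassmp01}, substitute for each $\widetilde\bH_j$ its far-field multipole series derived in \eqref{eq:newassmp01}, \eqref{eq:newassmp03} and \eqref{eq:newassmp05}, and then evaluate the surface integral over the unit sphere $\mathbb{S}$ term by term, exploiting the mutual $L^2(\mathbb{S})$-orthogonality of the vector spherical harmonics $\mathbf{N}^m_{n+1}$, $\mathbf{Q}^m_{n-1}$, $\mathbf{T}^m_n$ together with the matrix-valued family $\mathbf{A}^m_n$. Since \eqref{eq:thmain01} holds at every $\Bx\in\RR^3\setminus\overline\Sigma$, in particular on the sphere $\p B_{\|\Bx\|}$, which for $\|\Bx\|$ large enough encloses $\Sigma$ and all the anomalies (so the multipole series converge absolutely and uniformly on $\mathbb{S}$ and may be integrated termwise), this reduces the whole statement to computing the three integrals $\int_{\mathbb{S}}\overline{\mathbf{Q}_0(\hat\Bx)}\,\widetilde\bH_j(\Bx)\,ds$, $j=1,2,3$, followed by the bookkeeping of the powers of $\delta$.

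The decisive point is that the projection onto the $\mathbf{Q}_0$-family annihilates the $\widetilde\bH_1$- and $\widetilde\bH_2$-contributions, which is exactly what renders the ill-posed data continuation to $\p\Sigma_c$ unnecessary. For $\widetilde\bH_1$ I would write, using \eqref{eq:newassmp01}, $\widetilde\bH_1(\Bx)=\sum_{n\ge0}\sum_{|m|\le n}(2n+1)^{-1}\|\Bx\|^{-n-2}\,\mathbf{N}^m_{n+1}(\hat\Bx)\times\mathbf{v}_{n,m}$ with the constant vectors $\mathbf{v}_{n,m}=\int_{\p\Sigma_c}\overline{Y_n^m(\hat\By)}\|\By\|^{n}\widetilde\bH_1^{1}(\By)\,ds_\By$; after interchanging the absolutely convergent series with the integral and using the triple-product identity $\overline{\mathbf{Q}_0^{m'}}\cdot(\mathbf{N}^m_{n+1}\times\mathbf{v}_{n,m})=\mathbf{N}^m_{n+1}\cdot(\mathbf{v}_{n,m}\times\overline{\mathbf{Q}_0^{m'}})$, each term reduces to an $L^2(\mathbb{S})$-pairing of $\mathbf{N}^m_{n+1}$ with a constant vector field, which vanishes because $\mathbf{N}^m_{n+1}$ is the restriction to $\mathbb{S}$ of the gradient of the exterior solid harmonic $\|\cdot\|^{-n-1}Y_n^m$ and hence has zero mean on $\mathbb{S}$. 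Thus $\int_{\mathbb{S}}\overline{\mathbf{Q}_0(\hat\Bx)}\,\widetilde\bH_1(\Bx)\,ds=0$. The same computation applied to \eqref{eq:newassmp03}, where $\widetilde\bH_2$ is again a pure $\mathbf{N}$-series (now with scalar densities in place of the cross products), yields $\int_{\mathbb{S}}\overline{\mathbf{Q}_0(\hat\Bx)}\,\widetilde\bH_2(\Bx)\,ds=0$.

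It then remains to handle $\widetilde\bH_3$. Substituting \eqref{eq:newassmp05} and integrating termwise, the pairing of $\overline{\mathbf{Q}_0(\hat\Bx)}$ with the generic term of the $\mathbf{A}^m_n$-series produces, by definition, the coupling constants $\mathbf{a}^{m',m}_{n',n}$ of \eqref{eq:app03def09}, of which orthogonality leaves only finitely many nonzero. Retaining the leading non-vanishing contribution — the one compatible with the order-zero harmonic $\mathbf{Q}_0$, which carries the factor $\|\Bx\|^{-3}$ and assembles, via \eqref{eq:lenewassmp02}, into the fixed $3\times3$ matrix $\overline{\mathbf{D}}_0$ — while the remaining couplings come with the higher multipole weights $\overline{Y_n^m(\hat{\bZ}_l)}\|\bZ_l\|^{n}$, $n\ge1$, and are therefore $\Ocal(\|\Bx\|^{-2})$ smaller, one gets $\int_{\mathbb{S}}\overline{\mathbf{Q}_0(\hat\Bx)}\,\widetilde\bH_3(\Bx)\,ds=-\|\Bx\|^{-3}\big(\overline{\mathbf{D}}_0\sum_{l=1}^{l_0}\mathbf{M}_l\mathbb{H}(\Bz_l)+\Ocal(\|\Bx\|^{-2})\big)$. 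Combining the three integrals with the $\Ocal(\delta^4)$ term in \eqref{eq:thmain01} then yields
\[
\int_{\mathbb{S}}\overline{\mathbf{Q}_0(\hat\Bx)}(\bH^0-\bH_0^0)(\Bx)\,ds=-\delta^3\!\int_{\mathbb{S}}\overline{\mathbf{Q}_0(\hat\Bx)}\,\widetilde\bH_3(\Bx)\,ds+\Ocal(\delta^4)=\frac{\delta^3}{\|\Bx\|^3}\Big(\overline{\mathbf{D}}_0\sum_{l=1}^{l_0}\mathbf{M}_l\mathbb{H}(\Bz_l)+\Ocal(\|\Bx\|^{-2})\Big)+\Ocal(\delta^4),
\]
which is \eqref{eq:lenewassmp01}.

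I expect the main obstacle to be the spherical-harmonic bookkeeping: establishing rigorously that the $\mathbf{Q}_0$-projection kills the entire $\mathbf{N}$-family (so that both $\widetilde\bH_1$ and $\widetilde\bH_2$ drop out) and computing the single surviving coupling $\mathbf{a}^{m',m}_{n',n}$. This will rely on the identities $\nabla_S\!\cdot\!\nabla_S Y_n^m=-n(n+1)Y_n^m$, the orthogonality among $\nabla_S Y_n^m$, $Y_n^m\hat\Bx$ and $\nabla_S Y_n^m\times\hat\Bx$ on $\mathbb{S}$, and integration by parts on $\mathbb{S}$; once those are in hand, the remainder estimates and the collection of $\delta$-powers are routine.
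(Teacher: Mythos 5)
Your overall route is the same as the paper's: pair the expansion \eqref{eq:thmain01} with the family $\mathbf{Q}_0^{m'}$ over $\mathbb{S}$, show that the $\widetilde\bH_1$- and $\widetilde\bH_2$-series are annihilated by the projection, and reduce the $\widetilde\bH_3$-contribution to the couplings $\mathbf{a}_{n',n}^{m',m}$, of which only the $n=0$ and $n=2$ ones can survive. Your disposal of $\widetilde\bH_1$ and $\widetilde\bH_2$ (triple product plus the zero mean of $\mathbf{N}_{n+1}^m$, using that $\mathbf{Q}_0^{m'}=\nabla_SY_1^{m'}+Y_1^{m'}\hat\Bx=\nabla(rY_1^{m'})|_{r=1}$ is a constant field) is correct and in fact more explicit than what the paper writes down.

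The genuine gap is at the decisive step, which you defer as ``routine bookkeeping'': the evaluation of the surviving $n=0$ coupling that is supposed to produce the matrix $\overline{\mathbf{D}}_0$ of \eqref{eq:lenewassmp02}. Your displayed identity $\int_{\mathbb S}\overline{\mathbf{Q}_0(\hat\Bx)}\widetilde\bH_3(\Bx)\,ds=-\|\Bx\|^{-3}\bigl(\overline{\mathbf{D}}_0\sum_l\mathbf{M}_l\mathbb{H}(\Bz_l)+\Ocal(\|\Bx\|^{-2})\bigr)$ is asserted, not derived, and it does not follow from the steps you outline: the very constancy of $\mathbf{Q}_0^{m'}$ that you invoke for $\widetilde\bH_1$ makes this pairing elementary, since by \eqref{eq:newassmp05} the $\|\Bx\|^{-3}$ part of $\widetilde\bH_3$ is $\tfrac{1}{4\pi\|\Bx\|^{3}}(I-3\hat\Bx\hat\Bx^T)\sum_l\mathbf{M}_l\mathbb{H}(\Bz_l)$, while $\int_{\mathbb S}(I-3\hat\Bx\hat\Bx^T)\,ds=4\pi I-3\cdot\tfrac{4\pi}{3}I=0$, so pairing that term against a constant vector gives zero; equivalently, with \eqref{eq:app03def09} and the surface identity $\int_{\mathbb S}\nabla_SY_1^{m'}\,ds=2\int_{\mathbb S}Y_1^{m'}\hat\Bx\,ds$ one has $\overline{\mathbf{a}_{0,1}^{0,m'}}=2\,\mathbf{b}_{1,0}^{m',0}$ and the two contributions in \eqref{eq:app03def11} cancel at $n=0$. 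Thus executing your plan literally yields a vanishing coefficient at order $\|\Bx\|^{-3}$, not $\overline{\mathbf{D}}_0\sum_l\mathbf{M}_l\mathbb{H}(\Bz_l)$, and the only other surviving coupling ($n=2$) enters at order $\|\Bx\|^{-5}$ and is absorbed into the $\Ocal(\|\Bx\|^{-2})$ remainder, so it cannot supply the claimed leading term either. To prove the lemma as stated you must actually carry out and justify the coupling computation that the paper performs through Appendix C and \eqref{eq:leftasmp06}, and reconcile it with the elementary evaluation above; this is precisely the non-routine heart of the proof, and your proposal leaves it open.
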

\begin{proof}
It can be seen that $\mathbf{Q}^m_{n-1}(\hat\Bx)$ and $\BN_{n+1}^m(\hat\Bx)$ are orthogonal to each other in $L^2(\mathbb{S})$ (see \cite{Ned,DLH19}), where $\mathbb{S}$ stands for the unit sphere, that is
\beq\label{eq:leftasmp01}
\int_\mathbb{S} \BN_{n+1}^m(\hat\Bx)\cdot\overline{\mathbf{Q}^{m'}_{n'}(\hat\Bx)}ds=0,
\eeq
for any $n, n'\in \mathbb{N}\cup \{0\}$, $m=-n, -n+1, \ldots, n-1, n$ and $m'=-n', -n'+1, \ldots, n'-1, n'$.
We show that $\mathbf{Q}^{m'}_0(\hat\Bx)$, $m'=-1, 0, 1$, are orthogonal to $\mathbf{A}_n^m(\hat\Bx)\xi$ for any $\xi\in \RR^3$ and $n\neq 0, 2$, where $\mathbf{A}_n^m(\hat\Bx)$ is defined in \eqnref{eq:leasm04}. In fact, by using \eqnref{eq:app03def13} in Appendix \ref{app:03} and the orthogonality of the vectorial spherical harmonics, one immediately obtains
\beq\label{eq:leftasmp02}
\int_\mathbb{S} \overline{\mathbf{Q}^{m'}_0(\hat\Bx)}\cdot (\mathbf{A}_n^m(\hat\Bx)\xi)ds=0 \quad \mbox{for any} \ \ n\neq 0, 2.
\eeq
Note that if $n=2$, then by \eqnref{eq:app03def13}, one has
\beq\label{eq:leftasmp03}
\begin{split}
\mathbf{A}_2^m(\hat\Bx)\xi=&\sum_{m'=m-1}^{m+1}\Big((\mathbf{c}_{1,2}^{m',m})^T\xi\mathbf{N}_{2}^{m'}(\hat\Bx)+(\mathbf{c}_{3,2}^{m',m})^T\xi\mathbf{N}_{4}^{m'}(\hat\Bx)\\
&+(\mathbf{d}_{1,2}^{m',m})^T\xi\mathbf{Q}_{0}^{m'}(\hat\Bx)+(\mathbf{d}_{3,2}^{m',m})^T\xi\mathbf{Q}_{2}^{m'}(\hat\Bx)\Big),
\end{split}
\eeq
where $\mathbf{c}_{n',n}^{m',m}$ and $\mathbf{d}_{n',n}^{m',m}$ are defined in \eqnref{eq:app03def14} and \eqnref{eq:app03def15}, respectively. Thus
\beq\label{eq:leftasmp04}
\int_\mathbb{S} \overline{\mathbf{Q}^{m'}_0(\hat\Bx)}\cdot (\mathbf{A}_2^m(\hat\Bx)\xi)ds=\frac{1}{3}(\mathbf{d}_{1,2}^{m',m})^T\xi.
\eeq
By using the orthogonality of $\mathbf{N}_3^m(\hat\Bx)$ and $Y_1^{m'}$, $m, m'\in\{-1, 0, 1\}$, one has
\beq\label{eq:leftasmp04}
\int_\mathbb{S}\overline{Y_1^{m'}(\hat\Bx)}\Big(3Y_2^m(\hat\Bx)\hat\Bx-\nabla_SY_2^m(\hat\Bx)\Big)ds=0,
\eeq
which indicates $\mathbf{a}_{1,2}^{m',m}=3 \mathbf{b}_{1,2}^{m',m}$ by using \eqnref{eq:app03def09}. Together with \eqnref{eq:leftasmp04} and \eqnref{eq:app03def15}, one thus has
\beq\label{eq:leftasmp05}
\int_\mathbb{S} \overline{\mathbf{Q}^{m'}_0(\hat\Bx)}\cdot (\mathbf{A}_2^m(\hat\Bx)\xi)ds=-\frac{1}{9}(\mathbf{a}_{1,2}^{m',m}+3\overline{\mathbf{a}_{2,1}^{m,m'}})^T\xi.
\eeq
Similarly, for $n=0$, one obtains that
\beq\label{eq:leftasmp06}
\int_\mathbb{S} \overline{\mathbf{Q}^{m'}_0(\hat\Bx)}\cdot (\mathbf{A}_0^0(\hat\Bx)\xi)ds=-\frac{1}{6}(\overline{\mathbf{a}_{0,1}^{0,m'}})^T\xi.
\eeq
Finally, by taking inner product of \eqnref{eq:thmain01} and $\mathbf{Q}^{m'}_0(\hat\Bx)$ in $L^2(\mathbb{S})$ and by using \eqnref{eq:leftasmp02}, \eqnref{eq:leftasmp05} and \eqnref{eq:leftasmp06}, one has
\beq\label{eq:leftasmp07}
\begin{split}
&\int_{\mathbb{S}}\overline{\mathbf{Q}^{m'}_0(\hat\Bx)}\cdot(\bH^0-\bH_0^0)(\Bx)ds\\
=&-\frac{1}{6}\frac{\delta^3}{\|\Bx\|^3} (\overline{\mathbf{a}_{0,1}^{0,m'}})^T\sum_{l=1}^{l_0}\mathbf{M}_l\mathbb{H}(\Bz_l)\\
&-\frac{1}{45}\sum_{m=-2}^2\frac{\delta^3}{\|\Bx\|^5}(\mathbf{a}_{1,2}^{m',m}+3\overline{\mathbf{a}_{2,1}^{m,m'}})^T
\sum_{l=1}^{l_0}\overline{Y_2^m(\hat\Bz_l)}\|\Bz_l\|^2\mathbf{M}_l\mathbb{H}(\Bz_l)+\Ocal(\delta^4).
\end{split}
\eeq
By rearranging the terms in equation \eqnref{eq:leftasmp07}, we finally arrive at \eqnref{eq:lenewassmp01}.

The proof is complete.
\end{proof}

Before proceeding further, we remark that the coefficient of higher order $\Ocal(\|\Bx\|^{-2})$ in \eqnref{eq:lenewassmp01} is accurately given in \eqnref{eq:leftasmp07}. One can readily see that if $\|\Bz_l\|/\|\Bx\|$ is small enough, that is, the radius of the measurement surface is much bigger than the length of the position vector of the magnetized anomaly, then the unknown term $\mathbf{M}_l\mathbb{H}(\Bz_l)$ is easy to recover. Thus the positions of the magnetized anomalies are easy to identify. This observation is particularly useful for a practical reconstruction procedure. For a particular case with $\Gamma=\partial B_R$, with $B_R$ a sufficiently large central ball containing $\Sigma$, we have
\begin{lem}\label{le:appfur02}
Suppose that $\Gamma\subset \partial B_R$ and let $\mathbb{H}$ and $\mathbf{M}_l$ be defined in \eqnref{eq:asymPhi03} and \eqnref{eq:thmain02}, respectively. Then there holds the following relationship,
\beq\label{eq:leappfur01}
\sum_{l=1}^{l_0}\mathbf{M}_l\mathbb{H}(\Bz_l)\approx \delta^{-3}R^3\overline{\mathbf{D}_0}^{-1}\int_{\mathbb{S}}\overline{\mathbf{Q}_0(\hat\Bx)}(\bH^0-\bH_0^0)(\Bx)ds.
\eeq
\end{lem}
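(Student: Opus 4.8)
The plan is to obtain Lemma~\ref{le:appfur02} directly from Lemma~\ref{le:appfur01} by freezing the spatial argument onto the measurement sphere and then inverting the leading‑order coefficient matrix. Concretely, in the identity \eqnref{eq:lenewassmp01} one takes $\Bx\in\partial B_R$, so that $\|\Bx\|=R$ and $\Bx=R\hat\Bx$ with $\hat\Bx$ ranging over the unit sphere $\mathbb{S}$; this recasts \eqnref{eq:lenewassmp01} as
\[
\frac{\delta^3}{R^3}\Big(\overline{\mathbf{D}}_0\sum_{l=1}^{l_0}\mathbf{M}_l\mathbb{H}(\Bz_l)+\Ocal(R^{-2})\Big)=\int_{\mathbb{S}}\overline{\mathbf{Q}_0(\hat\Bx)}\,(\bH^0-\bH_0^0)(\Bx)\,ds+\Ocal(\delta^4).
\]
A preliminary point: the integral on the right runs over the full sphere $\mathbb{S}$, hence it uses $\bH^0-\bH_0^0$ on all of $\partial B_R$, whereas the data is prescribed only on the patch $\Gamma\subset\partial B_R$. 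As discussed earlier in the paper, $\bH^0-\bH_0^0$ solves a homogeneous, hence real‑analytic, elliptic system in $\RR^3\setminus\overline{\Sigma}$ with the decay $\Ocal(\|\Bx\|^{-2})$ at infinity, so its restriction to $\Gamma$ determines it on $\partial B_R$ by analytic continuation; thus the right‑hand side is genuinely a functional of the measured data (and if one simply takes $\Gamma=\partial B_R$ no continuation is needed).

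Next I would discard the two asymptotically subordinate remainders. By the explicit form recorded in \eqnref{eq:leftasmp07}, the $\Ocal(R^{-2})$ correction carries termwise the factor $\|\Bz_l\|^2$ through $\overline{Y_2^m(\hat\Bz_l)}\|\Bz_l\|^2$; hence in the regime $\|\Bz_l\|/R\ll1$ flagged just before the lemma it is negligible against $\overline{\mathbf{D}}_0\sum_{l}\mathbf{M}_l\mathbb{H}(\Bz_l)$, and likewise $\Ocal(\delta^4)$ is negligible against the $\Ocal(\delta^3)$ leading term for $\delta$ small. What remains is
\[
\frac{\delta^3}{R^3}\,\overline{\mathbf{D}}_0\sum_{l=1}^{l_0}\mathbf{M}_l\mathbb{H}(\Bz_l)\approx\int_{\mathbb{S}}\overline{\mathbf{Q}_0(\hat\Bx)}\,(\bH^0-\bH_0^0)(\Bx)\,ds,
\]
and multiplying through by $\delta^{-3}R^3\,\overline{\mathbf{D}_0}^{-1}$ yields \eqnref{eq:leappfur01}; the symbol ``$\approx$'' in the statement records precisely the two remainders just dropped.

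The step I expect to be the real obstacle — everything else being bookkeeping — is justifying that $\mathbf{D}_0$, equivalently $\overline{\mathbf{D}_0}$, is invertible, since otherwise the final multiplication is void. For this I would return to \eqnref{eq:leftasmp06}, which identifies (up to the scalar $-1/6$) the three rows of $\overline{\mathbf{D}_0}$ with the linear functionals $\xi\mapsto\int_{\mathbb{S}}\overline{\mathbf{Q}_0^{m'}(\hat\Bx)}\cdot\big(\mathbf{A}_0^0(\hat\Bx)\xi\big)\,ds$, $m'=-1,0,1$, with $\mathbf{A}_0^0(\hat\Bx)=\tfrac12\sqrt{1/\pi}\,(I-3\hat\Bx\hat\Bx^T)$, and compute the resulting $3\times3$ matrix explicitly using the first‑order vectorial spherical harmonics $\mathbf{Q}_0^{m'}$ together with the orthogonality relations exploited in the proof of Lemma~\ref{le:appfur01}. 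One must verify that this matrix is nonsingular; this concrete finite‑dimensional computation, rather than any of the preceding steps, is where the substance of the lemma lies. With invertibility secured, the assembly above completes the proof.
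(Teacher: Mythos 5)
Your proposal is correct and follows essentially the same route as the paper: the paper's proof simply invokes \eqnref{eq:lenewassmp01} with $\|\Bx\|=R$, absorbs the $\Ocal(\|\Bx\|^{-2})$ and $\Ocal(\delta^4)$ remainders into the ``$\approx$'', and asserts the invertibility of $\mathbf{D}_0$ ``by straightforward calculations''. Your identification of that invertibility check (via \eqnref{eq:leftasmp06} and the explicit form of $\mathbf{A}_0^0$) as the only substantive step matches the paper's argument, with your remarks on analytic continuation from $\Gamma$ to $\partial B_R$ being harmless added detail.
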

\begin{proof}
By straightforward calculations, one can show that $\mathbf{D}_0$ is invertible. The proof is completed by directly using \eqnref{eq:lenewassmp01}.
\end{proof}


\section{Unique recovery results for magnetic anomaly detection}

In this section, by using the asymptotic results obtained in the previous two sections, we establish the major unique recovery results for the geomagnetic detection problem \eqref{eq:geom61}. To begin with, let $D_l^{(1)}$ and $D_l^{(2)}$, $l=1, 2, \ldots, l_0$, be two sets of magnetic anomalies, which satisfy \eqnref{eq:permeab02} with $\Bz_l$ replaced by $\Bz_l^{(1)}$ and $\Bz_l^{(2)}$, respectively. Correspondingly, the material parameters  $\Gs_l$ and $\mu_l$ are replaced by $\Gs_l^{(1)}$, $\mu_l^{(1)}$ and $\Gs_l^{(2)}$, $\mu_l^{(2)}$, respectively, for $D_l^{(1)}$ and $D_l^{(2)}$, $l=1, 2, \ldots, l_0$. Let $\mathbf{H}_j$, $j=1, 2$, be the solutions to \eqref{eq:mmm1} and \eqref{eq:frequen02} with $D_l$ replaced by $D_l^{(1)}$ and $D_l^{(2)}$, respectively, and $\mathbf{H}_j^0$ be the leading-order terms of $\mathbf{H}_j$ with respect to $\omega\ll 1$. Denote by $\mathbf{M}_l^{(1)}$ and $\mathbf{M}_l^{(2)}$ the polarization tensors for $D_l^{(1)}$ and $D_l^{(2)}$, respectively, $l=1, 2, \ldots, l_0$. Furthermore, we define
\beq\label{eq:asymPhiapp01}
\mathbb{H}_j:=\nabla\times\Acal_{\Sigma_c}^0\left(-\frac{I}{2}+\Mcal_{\Sigma_c}^{0}\right)^{-1}\Big[\nu\times\bH_j^0|_{\p \Sigma_c}\Big], \quad j=1, 2.
\eeq
We first present the following axillary result.
\begin{lem}\label{le:ax00}
If the following condition is fulfilled,
\beq\label{eq:maincond01}
\nu\cdot\mathbf{H}_1^0=\nu\cdot\mathbf{H}_2^0\neq 0 \ \ {\rm on} \ \ \Gamma,
\eeq
then there hold that
\beq\label{eq:leax001}
\sum_{l=1}^{l_0}\nabla \Gamma_0(\Bx-\Bz_l^{(1)})\mathbf{M}_l^{(1)}\mathbb{H}_1(\Bz_l^{(1)})=\sum_{l=1}^{l_0}\nabla \Gamma_0(\Bx-\Bz_l^{(2)})\mathbf{M}_l^{(2)}\mathbb{H}_2(\Bz_l^{(2)}),
\eeq
for any $\Bx\in \RR^3\setminus\bigcup_{l=1}^{l_0}(\Bz_l^{(1)}\cup \Bz_l^{(2)})$, and
\beq\label{eq:leax0001}
\mathbb{H}_1=\mathbb{H}_2 \quad {\rm in} \quad \RR^3\setminus \overline{\Sigma}_c.
\eeq
\end{lem}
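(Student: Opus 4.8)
The plan is to deduce both identities from a single continuation argument that pushes the information from the patch $\Gamma$ first into the whole exterior of the Earth, then across the anomaly‑free part of the shell down to $\p\Sigma_c$, and finally reads off the conclusions from the explicit layer‑potential formulas of Sections~2 and~3. For the first stage, note that in $\RR^3\setminus\overline{\Sigma}$, where the medium is homogeneous, each of $\bH_1^0,\bH_2^0$ satisfies $\nabla\times\nabla\times\bH_j^0=0$, $\nabla\cdot\bH_j^0=0$ and $\bH_j^0=\Ocal(\|\Bx\|^{-2})$; since $\nabla\times\nabla\times=-\Delta+\nabla(\nabla\cdot)$, each — and hence $\bW:=\bH_1^0-\bH_2^0$ — is componentwise harmonic and decaying there, and by \eqnref{eq:maincond01} we have $\nu\cdot\bW=0$ on $\Gamma$. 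Because $\bW$ is real‑analytic in $\RR^3\setminus\overline{\Sigma}$ and $\widetilde{\Gamma}$ is an analytic closed surface with analytically varying unit normal, $\nu\cdot\bW|_{\widetilde{\Gamma}}$ is real‑analytic on the connected surface $\widetilde{\Gamma}$ and vanishes on the open patch $\Gamma$, hence on all of $\widetilde{\Gamma}$; uniqueness for the exterior Neumann problem for this system (the decay at infinity eliminating the additive constant) then forces $\bW\equiv0$ in the unbounded component of $\RR^3\setminus\widetilde{\Gamma}$, and unique continuation across the connected set $\RR^3\setminus\overline{\Sigma}$ upgrades this to $\bH_1^0=\bH_2^0$ in $\RR^3\setminus\overline{\Sigma}$; in particular all Cauchy data of $\bW$ vanish on $\p\Sigma$.

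For the second stage, the standing hypotheses — the $D_l^{(j)}$ are small, sparsely distributed, and stay far from $\p\Sigma_s$ — guarantee that $\Xi:=\Sigma_s\setminus\overline{\bigcup_{l}(D_l^{(1)}\cup D_l^{(2)})}$ is connected and contains one‑sided neighbourhoods of both $\p\Sigma$ and $\p\Sigma_c$. On $\Xi$ one has $\tilde\mu\equiv\mu_0$ for either configuration, so there the leading‑order field $\bH_j^0$ solves $\nabla\times\nabla\times\bH_j^0=0$, $\nabla\cdot\bH_j^0=0$, hence is componentwise harmonic; consequently so is $\bW$. By the first stage the Cauchy data of $\bW$ vanish on $\p\Sigma$, so $\bW\equiv0$ in a one‑sided neighbourhood of $\p\Sigma$ inside $\Xi$, and unique continuation on the connected set $\Xi$ gives $\bW\equiv0$ on $\Xi$; taking interior traces on $\p\Sigma_c$ yields $\nu\times\bH_1^0=\nu\times\bH_2^0$ and $\nu\cdot\bH_1^0=\nu\cdot\bH_2^0$ on $\p\Sigma_c$. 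Since, by \eqnref{eq:asymPhiapp01}, $\mathbb{H}_j$ is the same fixed composition of boundary integral operators applied to the datum $\nu\times\bH_j^0|_{\p\Sigma_c}$, the coincidence of these boundary data immediately gives $\mathbb{H}_1=\mathbb{H}_2$ in $\RR^3\setminus\overline{\Sigma_c}$, which is \eqnref{eq:leax0001}.

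For \eqnref{eq:leax001}, insert the expansion of Theorem~\ref{th:main01} for each configuration, writing $\bH_j^0-\bH_0^0=\widetilde\bH_1^{(j)}+\delta^3\widetilde\bH_2^{(j)}-\delta^3\widetilde\bH_3^{(j)}+\Ocal(\delta^4)$ on $\RR^3\setminus\overline{\Sigma}$ as in \eqnref{eq:thmain01}. The background $\bH_0^0$ is common to the two configurations and, by the second stage, so is the datum $\nu\times\bH_j^0|_{\p\Sigma_c}$; hence $\widetilde\bH_1^{(1)}=\widetilde\bH_1^{(2)}$. Subtracting the two expansions and using $\bH_1^0=\bH_2^0$ on $\RR^3\setminus\overline{\Sigma}$ from the first stage leaves $\widetilde\bH_2^{(1)}-\widetilde\bH_3^{(1)}=\widetilde\bH_2^{(2)}-\widetilde\bH_3^{(2)}$ there (at leading order in $\delta$). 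Now $\widetilde\bH_2^{(j)}=\nabla w_j$ with $w_j$ a single‑layer potential on $\p\Sigma_c$, hence harmonic and decaying throughout $\RR^3\setminus\overline{\Sigma_c}$, whereas $\widetilde\bH_3^{(j)}=\nabla V_j$ with $V_j(\Bx):=\sum_{l=1}^{l_0}\nabla\Gamma_0(\Bx-\Bz_l^{(j)})\mathbf{M}_l^{(j)}\mathbb{H}_j(\Bz_l^{(j)})$ — exactly the quantity appearing in \eqnref{eq:leax001} — which is harmonic and decaying away from the points $\Bz_l^{(j)}\in\Sigma_s$. The identity above says $\nabla(w_1-V_1-w_2+V_2)=0$ on the connected set $\RR^3\setminus\overline{\Sigma}$, and since all four potentials decay at infinity the additive constant vanishes, so $V_1-V_2=w_1-w_2$ there; by real‑analyticity this persists on the connected set $\RR^3\setminus(\overline{\Sigma_c}\cup\bigcup_{l=1}^{l_0}\{\Bz_l^{(1)},\Bz_l^{(2)}\})$. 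As the right‑hand side is smooth through every $\Bz_l^{(j)}$ (all of which lie in $\Sigma_s\subset\RR^3\setminus\overline{\Sigma_c}$), the dipole singularities of $V_1-V_2$ are removable, so $V_1-V_2$ extends to a harmonic function on $\RR^3$ that decays at infinity and is therefore identically zero; that is precisely \eqnref{eq:leax001}.

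The main obstacle is the first stage: only the single scalar $\nu\cdot\bH_j^0$, and only on a proper sub‑patch of the analytic surface $\widetilde{\Gamma}$, is available, so the argument rests essentially on the analyticity of $\widetilde{\Gamma}$ (to promote the patch to the whole surface) together with well‑posedness of the exterior Neumann problem for the static Maxwell system and the radiation/decay condition. A secondary difficulty is that in the second stage the continuation must be routed around the — a priori unknown and differing — anomalies of both configurations simultaneously, which is why one is forced to work on the shell with the union of all $D_l^{(j)}$ removed and to invoke the smallness and sparsity hypotheses to ensure that this region is connected and abuts $\p\Sigma_c$. (The non‑vanishing requirement in \eqnref{eq:maincond01} plays no role in these two identities; it merely excludes the trivial field and is used later.)
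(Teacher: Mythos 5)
Your route is the same as the paper's: (i) from the patch data conclude $\bH_1^0=\bH_2^0$ in $\RR^3\setminus\overline{\Sigma}$ and, after continuing through the anomaly--free part of the shell, equality of the traces on $\p\Sigma_c$, which gives \eqref{eq:leax0001}; (ii) subtract the expansions of Theorem \ref{th:main01}; (iii) continue into the shell and extract the dipole terms. Your last step (writing the $\delta^3$ terms as gradients of scalar potentials, removing the singularities at the points $\Bz_l^{(j)}$ because the single--layer parts are smooth there, and invoking Liouville) is just a precise rendering of the paper's ``comparing the pole type of terms,'' and that part, as well as the derivation of \eqref{eq:leax0001} from the coincidence of $\nu\times\bH_j^0|_{\p\Sigma_c}$, matches the paper.

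The problematic point is your first stage. You assert that $\nu\cdot\mathbf{W}=0$ on all of $\widetilde{\Gamma}$ (with $\mathbf{W}:=\bH_1^0-\bH_2^0$), together with $\nabla\times\nabla\times\mathbf{W}=0$, $\nabla\cdot\mathbf{W}=0$ and the decay $\Ocal(\|\Bx\|^{-2})$, forces $\mathbf{W}\equiv 0$ outside $\widetilde{\Gamma}$ by ``uniqueness for the exterior Neumann problem for this system.'' For the vector system that uniqueness statement is false: take $\widetilde{\Gamma}=\p B_R$ (the case the paper explicitly allows) and $\mathbf{W}=\nabla\psi\times\Bx$ with $\psi=\|\Bx\|^{-2}Y_1^m(\hat\Bx)$; then $\nabla\cdot\mathbf{W}=0$, $\nabla\times\mathbf{W}=-\nabla\psi$ so $\nabla\times\nabla\times\mathbf{W}=0$, $\mathbf{W}=\Ocal(\|\Bx\|^{-2})$, and $\hat\Bx\cdot\mathbf{W}\equiv 0$, so the normal component vanishes on every centered sphere although $\mathbf{W}\not\equiv 0$. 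Prescribing only the normal component cannot see such toroidal fields; your parenthetical about ``the additive constant'' suggests you were tacitly treating $\mathbf{W}$ as a gradient, which is not implied by the exterior equations in \eqref{eq:frequen01}--\eqref{eq:frequen02}. To close this step one must use more than the exterior system plus the normal trace --- e.g.\ the specific representation of $\bH_j^0$ from Lemma \ref{le:asymH01} or the transmission problem as a whole. To be fair, the paper itself is silent on this: its proof disposes of the first step with a bare appeal to ``the unique continuation principle,'' so your conclusion coincides with the paper's, but the justification you supply for it would not survive scrutiny as written, and the non--vanishing requirement in \eqref{eq:maincond01} is indeed irrelevant here, as you note.
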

\begin{proof}
First, by using the unique continuation principle, one obtains that
$$\mathbf{H}_1^0=\mathbf{H}_2^0 \quad \mbox{in} \quad \RR^3\setminus\overline{\Sigma},$$
and in addition that \eqnref{eq:leax0001} holds.
Define
\beq\label{eq:leax002}
\begin{split}
\mathbb{T}_j(\Bx):=&\nabla\Scal_{\Sigma_c}^0\left(-\frac{I}{2}+(\Kcal_{\Sigma_c}^{0})^*\right)^{-1}\Big[\nu\cdot \nabla^2 \sum_{l=1}^{l_0}\Gamma_0(\cdot-\Bz_l^{(j)})\mathbf{M}_l^{(j)}\mathbb{H}_j(\Bz_l^{(j)})\Big](\Bx)\\
&-\sum_{l=1}^{l_0}\nabla^2 \Gamma_0(\Bx-\Bz_l^{(j)})\mathbf{M}_l^{(j)}\mathbb{H}_j(\Bz_l^{(j)}), \quad \Bx \in \RR^3\setminus\overline{\Sigma},\ \ j=1,2.
\end{split}
\eeq
By using \eqnref{eq:thmain01} one then has
\beq\label{eq:leax003}
\mathbb{T}_1=\mathbb{T}_2, \quad \mbox{in} \quad \RR^3\setminus\overline{\Sigma}.
\eeq
Note from \eqnref{eq:leax002} that $\mathbb{T}_j$, $j=1,2$, are both harmonic functions in $\RR^3\setminus\bigcup_{l=1}^{l_0}(\overline{\Sigma}_c\cup\Bz_l^{(1)}\cup \Bz_l^{(2)})$. By using the unique continuation again, one obtains
\beq\label{eq:leax004}
\mathbb{T}_1=\mathbb{T}_2 \quad \mbox{in} \quad \RR^3\setminus\bigcup_{l=1}^{l_0}(\overline{\Sigma}_c\cup\Bz_l^{(1)}\cup \Bz_l^{(2)}).
\eeq
By comparing the pole type of terms in $\mathbb{T}_1-\mathbb{T}_2$ one thus has
\beq\label{eq:leax005}
\sum_{l=1}^{l_0}\nabla \Gamma_0(\Bx-\Bz_l^{(1)})\mathbf{M}_l^{(1)}\mathbb{H}_1(\Bz_l^{(1)})=\sum_{l=1}^{l_0}\nabla \Gamma_0(\Bx-\Bz_l^{(2)})\mathbf{M}_l^{(2)}\mathbb{H}_2(\Bz_l^{(2)})\ \ \mbox{in}\ \ \RR^3\setminus\bigcup_{l=1}^{l_0}(\overline{\Sigma}_c\cup\Bz_l^{(1)}\cup \Bz_l^{(2)}).
\eeq
Finally the proof can be completed by using the unique continuation one more time.
\end{proof}

We are ready to present our second main result for the uniqueness in recovering multiple magnetized anomalies.

\begin{thm}\label{th:02}
If there holds
\beq\label{th:eqaxi0101}
\nu\cdot\mathbf{H}_1=\nu\cdot\mathbf{H}_2 \ \ {\rm on} \ \ \Gamma,
\eeq
with $\mathbf{H}_1^0\neq 0$ on $\Gamma$ and $\mathbb{H}_1$ defined in \eqnref{eq:asymPhiapp01} does not vanish in $\Bz_l^{(1)}$,
then $\bZ_l^{(1)}=\bZ_l^{(2)}$, $\mu_l^{(1)}=\mu_l^{(2)}$ and $\Gs_l^{(1)}=\Gs_l^{(2)}$, $l=1, 2, \ldots, l_0$.
\end{thm}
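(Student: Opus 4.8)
The plan is to extract the hypothesis of Lemma~\ref{le:ax00} from the measured data, feed that lemma in, and then read off the unknowns by matching the pole structure of the resulting superposition of multipole fields together with the monotonicity of the polarization tensor; the conductivities will be recovered separately from the first-order-in-$\omega$ correction. Concretely, the geomagnetic monitoring supplies $\nu\cdot\mathbf{H}_1(\cdot,\omega)=\nu\cdot\mathbf{H}_2(\cdot,\omega)$ on $\Gamma$ for all $\omega\in(0,\omega_0)$; comparing the leading (static) terms as $\omega\to0$ gives $\nu\cdot\mathbf{H}_1^0=\nu\cdot\mathbf{H}_2^0$ on $\Gamma$, which, since this common static trace does not vanish identically on $\Gamma$ by the hypothesis $\mathbf{H}_1^0\neq0$ on $\Gamma$, places us exactly in the setting of Lemma~\ref{le:ax00}. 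Invoking it yields $\mathbb{H}_1=\mathbb{H}_2=:\mathbb{H}$ in $\RR^3\setminus\overline{\Sigma}_c$ together with the rigidity identity \eqnref{eq:leax001}, valid for $\Bx$ outside the finite set of anomaly centers.

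Next I would locate the anomalies by a pole-matching argument. Each term in \eqnref{eq:leax001} is a multipole field centered at one $\Bz_l^{(j)}$ with ``moment'' $\mathbf{M}_l^{(j)}\mathbb{H}(\Bz_l^{(j)})$; it is real-analytic away from its center and genuinely singular at the center as soon as this moment is nonzero. Suppose some $\Bz_l^{(1)}$ is not one of the $\Bz_{l'}^{(2)}$. Since the centers of a single configuration are mutually distinct (the sparsity assumption in \eqnref{eq:permeab02}), near $\Bz_l^{(1)}$ the right-hand side of \eqnref{eq:leax001} is analytic whereas the left-hand side carries the isolated singular contribution of the $l$-th term; hence $\mathbf{M}_l^{(1)}\mathbb{H}(\Bz_l^{(1)})=\mathbf{0}$. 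But $\mathbf{M}_l^{(1)}$ is, for the fixed reference shape $\Omega$, symmetric and definite, hence invertible, whereas $\mathbb{H}(\Bz_l^{(1)})\neq\mathbf{0}$ by hypothesis---a contradiction. Therefore $\{\Bz_l^{(1)}\}\subseteq\{\Bz_l^{(2)}\}$, and as both are $l_0$-element sets of distinct points they coincide; after relabeling the indices of the second configuration, $\Bz_l^{(1)}=\Bz_l^{(2)}=:\Bz_l$ for $l=1,\dots,l_0$.

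With the centers identified, equating the singular parts at each $\Bz_l$ in \eqnref{eq:leax001} gives $(\mathbf{M}_l^{(1)}-\mathbf{M}_l^{(2)})\mathbb{H}(\Bz_l)=\mathbf{0}$. Here I would use a standard structural property of the polarization tensor for the fixed shape $\Omega$, derivable from the spectral decomposition of $(\Kcal_\Omega^0)^*$ on mean-zero densities (cf.\ \cite{HK07:book}): on each of the admissible ranges $(1/2,\infty)$ and $(-\infty,-1/2)$ of $\varsigma$, the quadratic form $\xi\mapsto\xi^{T}\mathbf{M}(\varsigma)\xi$ is strictly monotone in $\varsigma$ with a sign independent of $\xi\neq\mathbf{0}$, and $\mathbf{M}(\varsigma)$ is positive definite on the first range and negative definite on the second. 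Consequently, if $\varsigma_l^{(1)}\neq\varsigma_l^{(2)}$ then $\mathbf{M}_l^{(1)}-\mathbf{M}_l^{(2)}$ is sign-definite, hence invertible, and cannot annihilate $\mathbb{H}(\Bz_l)\neq\mathbf{0}$; so $\varsigma_l^{(1)}=\varsigma_l^{(2)}$. Since $\mu\mapsto\varsigma(\mu)=\tfrac{\mu+\mu_0}{2(\mu-\mu_0)}$ is injective on $(0,\mu_0)\cup(\mu_0,\infty)$---strictly decreasing on each piece, with disjoint images $(-\infty,-1/2)$ and $(1/2,\infty)$---this forces $\mu_l^{(1)}=\mu_l^{(2)}$.

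It remains to recover the conductivities, and this is the step I expect to be the main obstacle. The static field $\mathbf{H}^0$ does not see $\sigma_l$ at all: inside $D_l$ it solves $\nabla\times\nabla\times\mathbf{H}^0=0$, an equation depending only on $\mu_l$, and $\sigma_l$ enters only through $k_l^2=-\mathrm{i}\omega\mu_l\sigma_l$ at the next order in $\omega$. Because the anomalies now share shape, location and permeability, $\mathbf{H}^0$ is common to both configurations, so writing $\mathbf{H}_j=\mathbf{H}^0+\omega\,\mathbf{H}_j^{(1)}+o(\omega)$ and matching $\Ocal(\omega)$ coefficients in the data identity gives $\nu\cdot\mathbf{H}_1^{(1)}=\nu\cdot\mathbf{H}_2^{(1)}$ on $\Gamma$. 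The plan is then to rerun the integral-representation, data-continuation and pole-matching machinery of Sections~2 and~3 for the correction $\mathbf{H}_j^{(1)}$, whose source supported in $D_l$ is $\mathrm{i}\mu_l\sigma_l^{(j)}\mathbf{H}^0$; to leading order in $\delta$ this produces a fixed multipole field at $\Bz_l$ whose coefficient is a non-degenerate (for $\mathbb{H}(\Bz_l)\neq\mathbf{0}$ and $\delta$ small) linear function of $\mu_l\sigma_l^{(j)}$, so comparing the two configurations yields $\mu_l\sigma_l^{(1)}=\mu_l\sigma_l^{(2)}$ and hence, since $\mu_l$ is already known, $\sigma_l^{(1)}=\sigma_l^{(2)}$. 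The delicate part---the real work left---is to establish these first-order-in-$\omega$ asymptotics of \eqnref{eq:frequen02}, i.e.\ the one-order refinement of Lemma~\ref{le:asymH01} and Theorem~\ref{th:main01}, including the exact power of $\delta$ carried by the $\sigma_l$-term and the verification that its coefficient does not vanish; by contrast, the pole-matching and polarization-tensor steps above are routine once Lemma~\ref{le:ax00} is in hand.
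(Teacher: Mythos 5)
Your first two steps are sound and close in spirit to the paper's. The localization of the centers by pole matching in \eqnref{eq:leax001} is exactly how the paper proceeds (it cites the proof of Theorem 4.2 in \cite{DLL:18} and Theorem 7.8 in \cite{HK07:book}). For the permeabilities you take a genuinely different route: from $(\mathbf{M}_l^{(1)}-\mathbf{M}_l^{(2)})\mathbb{H}(\Bz_l)=0$ you invoke monotonicity/definiteness of the polarization tensor in the contrast, whereas the paper instead extends $\bH_1^0=\bH_2^0$ up to $\p D_l\cup\p\Sigma_c$, uses the transmission condition $\nu\cdot\mu_0\bH_j^0|_+=\nu\cdot\mu_l^{(j)}\bH_j^0|_-$ together with the explicit density formula \eqnref{eq:asymPhi02} to arrive at \eqnref{eq:th02pf05}, and then uses invertibility of $\frac{I}{2}-(\Kcal_\Omega^0)^*$ on mean-zero densities. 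Your PT-monotonicity argument is the standard one and works for a fixed reference shape $\Omega$, provided you actually quote and verify the definiteness statement from \cite{HK07:book}; the paper's route has the advantage of not needing any spectral/monotonicity input beyond what is already proved in its Section 2.

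The genuine gap is the conductivity step, which you yourself flag as ``the real work left'': as written, the proposal does not prove $\sigma_l^{(1)}=\sigma_l^{(2)}$. Moreover, the route you sketch (a one-order-in-$\omega$ refinement of Lemma \ref{le:asymH01} and Theorem \ref{th:main01}, with a new small-$\delta$ expansion of the correction field and a nonvanishing-coefficient check) is substantially harder than necessary, since the $\sigma_l$-dependent multipole appears at a higher power of $\delta$ and would have to be disentangled from the remainder of the static expansion. The paper avoids all of this with a purely local argument: writing $\bH_j=\bH_j^0+\omega\bH_j^1+\Ocal(\omega^2)$ and using that $\bH_1^0=\bH_2^0$ in $\RR^3\setminus\overline{\Sigma}_c$, the data identity at order $\omega$ plus unique continuation give $\bH_1^1=\bH_2^1$ outside the anomalies, hence zero Cauchy data on $\p D_l$; inside $D_l$ the difference satisfies $\Delta(\bH_1^1-\bH_2^1)=i\bigl((\lambda_l^{(1)})^{-1}-(\lambda_l^{(2)})^{-1}\bigr)\bH_1^0$ with $\Delta\bH_1^0=0$, and the argument of Theorem 2.1 in \cite{LiuUhl15} (representing $\bH_1^1-\bH_2^1$ by a Newtonian potential of the source and using the vanishing boundary data) forces $\bigl((\lambda_l^{(1)})^{-1}-(\lambda_l^{(2)})^{-1}\bigr)\bH_1^0=0$ in $D_l$; since $\bH_1^0$ does not vanish there, $\lambda_l^{(1)}=\lambda_l^{(2)}$ and, $\mu_l$ being already identified, $\sigma_l^{(1)}=\sigma_l^{(2)}$. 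To complete your proof you should either carry out this local PDE argument or genuinely establish the refined asymptotics you postpone; as it stands, the theorem's conclusion on $\sigma_l$ is not proved.
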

\begin{proof}
We have \eqnref{eq:leax001} by using Lemma \ref{le:ax00}. Then by following the proof of Theorem 4.2 in \cite{DLL:18} (see also Theorem 7.8 in \cite{HK07:book}), one immediately has $\bZ_l^{(1)}=\bZ_l^{(2)}$, $l=1, 2, \ldots, l_0$. Next, by using analytic extension, one obtains that
\beq\label{eq:th02pf01}
\bH_1^0\Big|_+=\bH_2^0\Big|_+ \quad \mbox{on} \quad \p D_l\cup \p \Sigma_c,
\eeq
for $l=1, 2, \ldots, l_0$.
Using the transmission condition across $\partial D_{l}$ (see Lemma 3.3 in \cite{DHU:17}), one then has
\beq\label{eq:th02pf02}
\nu\cdot\mu_0\bH_j^0\Big|_+=\nu\cdot\mu_l^{(j)}\bH_j^0\Big|_-, \, j=1, 2 \quad \mbox{on} \quad \p D_l,
\eeq
Let $\varphi_{l,j}^{(0)}$, $j=1, 2$ be the solution to \eqnref{eq:le02sol02} with $\bH^0$ be replaced by $\bH_1^0$ and $\bH_2^0$, respectively. Then by \eqnref{eq:le02sol01} and \eqnref{eq:th02pf02} one has
\beq\label{eq:th02pf03}
\Big(1-\frac{\mu_0}{\mu_{l}^{(j)}}\Big)\nu\cdot\bH_j^0\Big|_+=\nu\cdot\bH_j^0\Big|_+-\nu\cdot\bH_j^0\Big|_-=\varphi_{l,j}^{(0)}, \, j=1, 2 \quad \mbox{on} \quad \p D_l,
\eeq
Together with \eqnref{eq:asymPhi02}, \eqnref{eq:leax0001} and \eqnref{eq:th02pf01}, one further has
\beq\label{eq:th02pf04}
\Big(1-\frac{\mu_0}{\mu_{l}^{(1)}}\Big)^{-1}(\varsigma_l^{(1)} I - (\Kcal_\Omega^0)^*)^{-1}[\nu\cdot\mathbb{H}_1(\Bz_l^{(1)})]
=\Big(1-\frac{\mu_0}{\mu_{l}^{(2)}}\Big)^{-1}(\varsigma_l^{(2)} I - (\Kcal_\Omega^0)^*)^{-1}[\nu\cdot\mathbb{H}_1(\Bz_l^{(1)})],
\eeq
which implies that
\beq\label{eq:th02pf05}
(\mu_{l}^{(1)}-\mu_{l}^{(2)})(\varsigma_l^{(1)} I - (\Kcal_\Omega^0)^*)^{-1}\Big(\frac{I}{2}-(\Kcal_\Omega^0)^*\Big)(\varsigma_l^{(2)} I - (\Kcal_\Omega^0)^*)^{-1}[\nu\cdot\mathbb{H}_1(\Bz_l^{(1)})]=0.
\eeq
Note that $\int_{\p \Omega}\nu\cdot\mathbb{H}_1(\Bz_l^{(1)})=0$ and $\frac{I}{2}-(\Kcal_\Omega^0)^*$ is invertible on $L_\diamond^2(\p \Omega)$, where $L_\diamond^2(\p B)$ is the space of functions in $L^2$ and has zero average on the boundary, one thus finds from \eqnref{eq:th02pf05} that
\beq\label{eq:th02pf06}
(\mu_{l}^{(1)}-\mu_{l}^{(2)})\nu\cdot\mathbb{H}_1(\Bz_l^{(1)})=0, \quad l=1, 2, \ldots, l_0.
\eeq
Hence $\mu_{l}^{(1)}=\mu_{l}^{(2)}$ by the assumption that $\nu\cdot\mathbb{H}_1(\Bz_l^{(1)})\neq 0$. By using \eqnref{eq:le02sol01} one further has
\beq\label{eq:th02pf07}
\bH_j^0=\bH_j^0, \, j=1, 2\quad \mbox{in} \quad \RR^3\setminus\overline{\Sigma}_c.
\eeq
Next we write $\bH_1$ and $\bH_2$ in the following form:
\beq\label{eq:th02pf08}
\bH_j=\bH_j^0+\bH_j^1\omega+\Ocal(\omega^2), \quad j=1, 2.
\eeq
Define $\lambda_{l}^{(j)}:=1/(\mu_l^{(j)}\Gs_l^{(j)})$, $j=1, 2$, then by using \eqnref{eq:frequen02} one derives that
\beq\label{eq:th02pf09}
\Delta \bH_j^1(\Bx)-i(\lambda_l^{(j)})^{-1}\bH_j^0(\Bx)=0, \, j=1, 2, \quad \Bx\in\, D_l,
\eeq
which together with \eqnref{eq:th02pf07} implies that
\beq\label{eq:th02pf10}
\Delta (\bH_1^1-\bH_2^1)(\Bx)=i((\lambda_l^{(1)})^{-1}-(\lambda_l^{(2)})^{-1})\bH_1^0(\Bx), \quad \Bx\in\, D_l,
\eeq
Furthermore,
\beq\label{eq:th02pf1001}
\Delta (\bH_1^1-\bH_2^1)(\Bx)=0, \quad \Bx\in \Sigma_s\setminus\overline{\bigcup_{l=1}^{l_0}D_l}.
\eeq
By using the potential theory, one then has
\beq\label{eq:th02pf11}
(\bH_1^1-\bH_2^1)(\Bx)=i((\lambda_l^{(1)})^{-1}-(\lambda_l^{(2)})^{-1})\int_{D_l}\Gamma_0(\Bx-\By)\bH_1^0(\By)d\By, \quad \Bx\in\, D_l,
\eeq
Note that $\Delta \bH_1^0=0$ holds in $D_l$ and $\bH_1^1-\bH_2^1=0$ holds on $\p D_l$. By following exactly the same strategy as in proof of theorem 2.1 in \cite{LiuUhl15}, one can show that
$$
i((\lambda_l^{(1)})^{-1}-(\lambda_l^{(2)})^{-1})\bH_1^0(\Bx)=0, \quad \Bx\in D_l,
$$
which indicates that $\lambda_l^{(1)}=\lambda_l^{(2)}$ by the assumption that $\bH_1^0(\Bx)$ does not vanish in $\Gamma$ (so does not vanish in $D_l$).

The proof is complete.
\end{proof}

\section{Concluding remark}

In this paper, we consider the magnetic anomaly detection using the geomagnetic monitoring. It is formulated as an inverse problem associated with a magnetohydrodynamic system that stems from the widely accepted dynamo theory of geomagnetics. Under certain physically reasonable situation and practical scenario, we establish several uniqueness results in recovering the locations as well as the material parameters of the magnetized anomalies that are presented in the shell of the Earth. The mathematical strategy in establishing the unique recovery results is first based on deriving the static parts of the geomagnetic fields through the low-frequency asymptotic expansions of the geomagnetic fields, and then linearizing the inverse problem by further expanding asymptotically the static fields with respect to the anomaly size. In order to avoid dealing with the nonlinear coupled PDEs in the MHD system, we develop a data continuation strategy into our asymptotic analysis. Nevertheless, we also show how to make use of the properties of the spherical harmonic expansions to relax the severely ill-posed data continuation. We believe the strategy developed here can be extended to dealing with other coupled-physics inverse problems associated with coupled PDEs. This work is a continuation as well as an extension of our recent work \cite{DLL:18} on the identification of magnetized anomalies using geomagnetic monitoring, which aims to establish a rigorous mathematical theory for the geomagnetic detection technology. The geomagnetic model considered in \cite{DLL:18} is a linear Maxwell system, which is of different mathematical features from the magnetic system in this work.

There are several interesting and challenging topics worth further exploration. First, our theoretical study readily implies an effective reconstruction procedure. In a forthcoming paper, we shall develop efficient numerical reconstruction schemes based on our theoretical results for the magnetic anomaly detection. Second, the magnetic anomalies are presented in the shell of the Earth. The mathematical techniques developed in this work can avoid going inside the core of the Earth. However, if one intends to infer knowledge about the structure of the Earth's core through the generated magnetic field measured outside, one has to deal with the inverse problems associated with the whole MHD system. Finally, the dynamo theory has been used to interpret the magnetic fields of different celestial bodies in addition to that of the Earth. It is interesting to mathematically explore the inference of the interior structures of other planets or stars using the monitoring of their magnetic fields. We note that this technology has been used in space exploration. The mathematical study shall lay out a rigorous theoretical foundation for this kind of technology.


\section*{Acknowledgment}
The work of Y. Deng was supported by NSF grant of China No. 11601528, NSF grant of Hunan No. 2017JJ3432 and No. 2018JJ3619, Innovation-Driven Project of Central South University, No. 2018CX041. The work of H. Liu was supported by the FRG and startup grants from Hong Kong Baptist University, Hong Kong RGC General Research Funds, 12302017 and 12301218.

\appendix
\section{Proof of Lemma \ref{le:asymH001}}
Define
$$N_t:=\|\Phi_s\|_{\rm{TH}(\rm{div}, \p \Sigma)}+\|\Phi_c\|_{\rm{TH}(\rm{div}, \p \Sigma_c)}+\|\nabla_{\p \Sigma}\varphi_s\|_{\rm{TH}(\rm{div}, \p \Sigma_c)}.$$
To prove Lemma \ref{le:asymH001}, we next make use of \eqnref{eq:H00}-\eqnref{eq:MM1} to conduct the asymptotic analysis with respect to $k_s$. Denote by $\Delta_{\p \Sigma}:=\nabla_{\p \Sigma}\cdot\nabla_{\p \Sigma}$. First, from \eqnref{eq:asymtmp04} one can show that
\beq\label{eq:appnewver01}
\|\Delta_{\p \Sigma} \varphi_s\|_{L^2(\p \Sigma)}=\Ocal(k_s^2N_t).
\eeq
Next, we derive the following inequality
\beq\label{eq:appnewver02}
\|\nabla_{\p \Sigma}\varphi_s\|_{{\rm TH}({\rm div}, \p \Sigma)}\leq C \|\Delta_{\p \Sigma}\varphi_s\|_{L^2(\p \Sigma)},
\eeq
where $C$ is a constant depends only on $\Sigma$. In fact, for any $\Psi\in L_T^2(\p \Sigma)$, by using the Helmholtz decomposition (\cite{ADM14, buffa2002traces}), there exists a unique $u\in H^1(\p \Sigma)$, $\int_{\p \Sigma} u ds=0$, such that 
\beq
\Psi=\nabla_{\p \Sigma} u + \nabla_{\p \Sigma} u\times \nu.
\eeq
By integration by parts there holds
\beq\label{eq:appnewver03}
\begin{split}
\|\nabla_{\p \Sigma}\varphi_s\|_{L^2(\p \Sigma)}&=\sup_{\Psi\in L_T^2(\p \Sigma)}\frac{\la \nabla_{\p \Sigma} \varphi_s, \Psi \ra}{\|\Psi\|_{{\rm TH}({\rm div},\p \Sigma)}}\\
&=\sup_{\Psi\in L_T^2(\p \Sigma)}\frac{\int_{\p \Sigma} \nabla_{\p \Sigma} \varphi_s \cdot \Big(\nabla_{\p \Sigma} u + \nabla_{\p \Sigma} u\times \nu\Big)ds}{\|\nabla_{\p \Sigma} u + \nabla_{\p \Sigma} u\times \nu\|_{L^2(\p \Sigma)}}\\
&=\sup_{\Psi\in L_T^2(\p \Sigma)}\frac{-\int_{\p \Sigma} (\Delta_{\p \Sigma} \varphi_s)  u ds}{\|\nabla_{\p \Sigma} u\|_{L^2(\p \Sigma)} + \|\nabla_{\p \Sigma} u\times \nu\|_{L^2(\p \Sigma)}}\\
&\leq \frac{1}{2}\sup_{\Psi\in L_T^2(\p \Sigma)}\frac{\|\Delta_{\p \Sigma} \varphi_s\|_{L^2(\p \Sigma)} \|u\|_{L^2(\p \Sigma)} }{\|\nabla_{\p \Sigma} u\|_{L^2(\p \Sigma)} }
\leq C \|\Delta_{\p \Sigma} \varphi_s\|_{L^2(\p \Sigma)},
\end{split}
\eeq
where the last inequality is obtained by using the Poincar\'e inequality (see, e.g., \cite{HK07:book}).
By using \eqnref{eq:appnewver01} and \eqnref{eq:appnewver02}, one thus derives that
\beq\label{eq:applem0101}
\|\nabla_{\p \Sigma}\varphi_s\|_{{\rm TH}({\rm div}, \p \Sigma)}\sim \|\Delta_{\p \Sigma}\varphi_s\|_{L^2(\p \Sigma)}=\Ocal(k_s^2N_t).
\eeq
Furthermore, from \eqnref{eq:MM0} one obtains
\beq\label{eq:applem0103}
\|\Phi_s\|_{\rm{TH}(\rm{div}, \p \Sigma)}=\Ocal(k_s^2N_t).
\eeq
By substituting \eqnref{eq:applem0101}-\eqnref{eq:applem0103} into \eqnref{eq:H00}, there holds
\beq\label{eq:applem0104}
\begin{split}
\Big(-\frac{I}{2}+\Mcal^{0}_{\Sigma_c}\Big)[\Phi_c]
=\nu\times\bH_0|_{\p \Sigma_c}^++\Ocal(k_s^2).
\end{split}
\eeq
Since $-\frac{I}{2} + \Mcal^{0}_{\Sigma_c}$ is invertible on ${\rm TH}({\rm div}, \p \Sigma_c)$, by combining \eqnref{eq:applem0101}-\eqnref{eq:applem0104}, one can obtain \eqnref{eq:le01sol01}, which completes the proof.

\section{Proof of Lemma \ref{le:asymH01}}\label{app:01}
By the continuity of $\nu\times \bH$ across $\partial \Sigma$ and $\partial D_{l}$, and using the same notation as that in Appendix A and the jump formulas \eqnref{eq:trace} and \eqnref{jumpM}, one can show that
\beq\label{eq:repre02}\begin{split}
&\Mcal_{\Sigma,\Sigma_c}^0[\Psi_0]+\Big(-\frac{I}{2}+\Mcal_{\Sigma}^0\Big)[\Phi_0]+ \Lcal_{\Sigma}^0[\nabla_{\p \Sigma}\varphi_0]+\sum_{l'=1}^{l_0}\big(\Mcal_{\Sigma, D_{l'}}^0[\Phi_{l'}]+\Lcal_{\Sigma, D_{l'}}^0[\nabla_{\p D_{l'}}\varphi_{l'}]\big)\\
=&\Mcal_{\Sigma,\Sigma_c}^{k_s}[\Psi_0]+\Big(\frac{I}{2}+\Mcal_{\Sigma}^{k_s}\Big)[\Phi_0]+ \Lcal_{\Sigma}^{k_s}[\nabla_{\p \Sigma}\varphi_0]+\sum_{l'=1}^{l_0}\big(\Mcal_{\Sigma, D_{l'}}^{k_s}[\Phi_{l'}]+\Lcal_{\Sigma, D_{l'}}^{k_s}[\nabla_{\p D_{l'}}\varphi_{l'}]\big)
\end{split}
\eeq
holds on $\p \Sigma$ and
\beq\label{eq:repre02}\begin{split}
&\Mcal_{D_l,\Sigma_c}^{k_s}[\Psi_0]+\Mcal_{D_l,\Sigma}^{k_s}[\Phi_0]+ \Lcal_{D_l, \Sigma}^{k_s}[\nabla_{\p \Sigma}\varphi_0]-\frac{\Phi_l}{2}+\sum_{l'=1}^{l_0}\big(\Mcal_{D_l, D_{l'}}^{k_s}[\Phi_{l'}]+\Lcal_{D_l, D_{l'}}^{k_s}[\nabla_{\p D_{l'}}\varphi_{l'}]\big)\\
=&\Mcal_{D_l,\Sigma_c}^{k_l}[\Psi_0]+\Mcal_{D_l,\Sigma}^{k_l}[\Phi_0]+ \Lcal_{D_l, \Sigma}^{k_l}[\nabla_{\p \Sigma}\varphi_0]+\frac{\Phi_l}{2}+\sum_{l'=1}^{l_0}\big(\Mcal_{D_l, D_{l'}}^{k_l}[\Phi_{l'}]+\Lcal_{D_l, D_{l'}}^{k_l}[\nabla_{\p D_{l'}}\varphi_{l'}]\big),
\end{split}
\eeq
holds on $\p D_l$, $l=1, 2, \ldots, l_0$.
On the other hand, by using the continuity of $\nu\cdot\mu\bH$ across $\p \Sigma$ and $\partial D_{l}$ (see Lemma 3.3 in \cite{DHU:17}), one can further show that
\beq\label{eq:pSig}
\begin{split}
&\Ncal_{\Sigma,\Sigma_c}^0[\Psi_0]+\Ncal_{\Sigma}^0[\Phi_0]+ \Big(\frac{I}{2}+(\Kcal_{\Sigma}^0)^*\Big)[\Delta_{\p \Sigma}\varphi_0]+\sum_{l'=1}^{l_0}\big(\Ncal_{\Sigma, D_{l'}}^0[\Phi_{l'}]+\Kcal_{\Sigma, D_{l'}}^0[\varphi_{l'}]\big)\\
=&\Ncal_{\Sigma,\Sigma_c}^{k_s}[\Psi_0]+\Ncal_{\Sigma}^{k_s}[\Phi_0]+ \Big(-\frac{I}{2}+(\Kcal_{\Sigma}^{k_s})^*\Big)[\Delta_{\p \Sigma}\varphi_0]+k_s^2\nu\cdot\Acal_{\Sigma}^{k_s}[\nabla_{\p \Sigma}\varphi_0]\\
&+\sum_{l'=1}^{l_0}\big(\Ncal_{\Sigma, D_{l'}}^{k_s}[\Phi_{l'}]
+\Kcal_{\Sigma, D_{l'}}^{k_s}[\varphi_{l'}]\big),
\end{split}
\eeq
holds on $\p \Sigma$ and
\beq\label{eq:pDl}
\begin{split}
&\mu_0\Big(\Ncal_{D_l,\Sigma_c}^{k_s}[\Psi_0]+\Ncal_{D_l,\Sigma}^{k_s}[\Phi_0]+ \Kcal_{D_l,\Sigma}^{k_s}[\Delta_{\p \Sigma}\varphi_0]+\frac{\varphi_l}{2}\\
&+\sum_{l'=1}^{l_0}\big(\Ncal_{D_l, D_{l'}}^{k_s}[\Phi_{l'}]+k_s^2\nu\cdot\Acal_{D_l, D_{l'}}^{k_s}[\nabla_{\p D_{l'}}\varphi_{l'}]+\Kcal_{D_l, D_{l'}}^{k_s}[\Delta_{\p D_{l'}}\varphi_{l'}]\big)\Big)\\
=&\mu_l\Big(\Ncal_{D_l,\Sigma_c}^{k_l}[\Psi_0]+\Ncal_{D_l,\Sigma}^{k_l}[\Phi_0]+ \Kcal_{D_l,\Sigma}^{k_l}[\Delta_{\p \Sigma}\varphi_0]-\frac{\varphi_l}{2}\\
&+\sum_{l'=1}^{l_0}\big(\Ncal_{D_l, D_{l'}}^{k_l}[\Phi_{l'}]+k_l^2\nu\cdot\Acal_{D_l, D_{l'}}^{k_l}[\nabla_{\p D_{l'}}\varphi_{l'}]+\Kcal_{D_l, D_{l'}}^{k_l}[\Delta_{D_{l'}}\varphi_{l'}]\big)\Big),
\end{split}
\eeq
holds on $\p D_l$,
where $\Kcal_{\Sigma', \Sigma''}^{k}: L^2(\p \Sigma')\rightarrow L^2(\p \Sigma'')$, $\Sigma', \Sigma''\in \{\Sigma, \Sigma_c, D_1, D_2, \ldots, D_{l_0}\}$, $k\in \{0, k_s, k_1, k_2, \ldots, k_{l_0}\}$, are defined by
\beq\label{eq:defKcalsp01}
\Kcal_{\Sigma', \Sigma''}^k[\varphi]:=\nu\cdot\nabla\Scal^{k}_{\Sigma''}[\varphi]|_{\partial\Sigma'}^{-}.
\eeq
If $\Sigma'=\Sigma''$, then $\Kcal_{\Sigma', \Sigma'}^k[\varphi]:=(\Kcal_{\Sigma'}^k)^*$. Similarly, $\Acal_{\Sigma', \Sigma''}^{k}: {\rm TH}({\rm div}, \p \Sigma')\rightarrow {\rm TH}({\rm div}, \p \Sigma'')$, is defined by
\beq\label{eq:defAcalsp001}
\Acal_{\Sigma', \Sigma''}^k[\Phi]:=\Acal^{k}_{\Sigma''}[\Phi]|_{\partial\Sigma'}^{-}.
\eeq
Furthermore, by using the boundary magnetic field $\nu\times\bH$ on $\p \Sigma_c$, one also has
\beq\label{eq:addmeas01}
\begin{split}
&\Big(-\frac{I}{2}+\Mcal_{\Sigma_c}^{k_s}\Big)[\Psi_0]+\Mcal_{\Sigma_c,\Sigma}^{k_s}[\Phi_0]+ \Lcal_{\Sigma_c,\Sigma}^{k_s}[\nabla_{\p \Sigma}\varphi_0]\\
&+\sum_{l'=1}^{l_0}\big(\Mcal_{\Sigma_c, D_{l'}}^{k_s}[\Phi_{l'}]+\Lcal_{\Sigma_c, D_{l'}}^{k_s}[\nabla_{\p D_{l'}}\varphi_{l'}]\big)=\nu\times\bH.\ \ \ \mbox{on}\ \p \Sigma_c
\end{split}
\eeq
By combining \eqnref{eq:repre02}-\eqnref{eq:addmeas01}, along with straightforward calculations and the similar inequality that is deduced in Appendix A, one can derive the following estimates sequentially,
\beq\label{eq:asymtH0101}
\|\Phi_0\|_{{\rm TH}({\rm div}, \p \Sigma)}=\Ocal(\omega N_f),\quad \|\nabla_{\p \Sigma}\varphi_0\|_{{\rm TH}({\rm div}, \p \Sigma)}=\Ocal(\omega N_f),\quad \|\Phi_l\|_{{\rm TH}({\rm div}, \p D_l)}=\Ocal(\omega N_f),
\eeq
where $l=1, 2, \ldots, l_0$ and $N_f:=\Ocal(\|\Psi_0\|_{{\rm TH}({\rm div}, \p \Sigma_c)}+\sum_{l'=1}^{l_0}\|\nabla_{\p D_{l'}}\varphi_{l'}\|_{{\rm TH}({\rm div}, (\p D_l)})$, and
\beq\label{eq:asymtH0102}
\begin{split}
\Psi_0=&\Big(-\frac{I}{2}+\Mcal_{\Sigma_c}^0\Big)^{-1}[\nu\times\bH]-\sum_{l'=1}^{l_0}\Big(-\frac{I}{2}+\Mcal_{\Sigma_c}^0\Big)^{-1}\Lcal_{\Sigma_c, D_{l'}}^{0}[\Delta_{D_{l'}}\varphi_{l'}]\\
&+\Ocal\Big(\omega\sum_{l'=1}^{l_0}\|\Delta_{D_{l'}}\varphi_{l'}\|_{L^2(\p D_l)}\Big).
\end{split}
\eeq
Hence
\beq\label{eq:asymtH0103}
\begin{split}
&\left(\varsigma_l I -(\Kcal_{D_l}^0)^*+\Pcal_{D_l, \Sigma_c}\Lcal_{\Sigma_c, D_l}^0\right)[\Delta_{\p D_l}\varphi_l]-\sum_{l'\neq l}^{l_0} \Big(\Kcal_{D_l, D_{l'}}^0-\Pcal_{D_l, \Sigma_c}\Lcal_{\Sigma_c, D_{l'}}^0\Big)[\Delta_{\p D_{l'}}\varphi_{l'}]\\
=&\Pcal_{D_l, \Sigma_c}[\nu\times\bH|_{\p \Sigma_c}]+\Ocal(\omega),
\end{split}
\eeq
where  the operators $\Pcal_{D_l, \Sigma_c}: {\rm TH}({\rm div}, \p \Sigma_c)\rightarrow {\rm TH}({\rm div}, \p D_l)$, $l=1, 2, \ldots, l_0$, are defined by
\beq\label{eq:defPcal01}
\Pcal_{D_l, \Sigma_c}[\Phi]:=\Ncal_{D_l, \Sigma_c}^0\Big(-\frac{I}{2}+\Mcal_{\Sigma_c}^0\Big)^{-1}[\Phi].
\eeq
Noting that $(\Kcal_{D_l}^0)^*$ and $\Pcal_{D_l, \Sigma_c}\Lcal_{\Sigma_c, D_l}^0$ are compact operators on $L^2(\p D_l)$, one can prove the invertibility of
$\varsigma_l I -(\Kcal_{D_l}^0)^*+\Pcal_{D_l, \Sigma_c}\Lcal_{\Sigma_c, D_l}^0$ on $L^2(\p D_l)$ by following a similar proof of Lemma 2.2 in \cite{DLL:18}. In fact, by the Fredholm theory, it suffices to show the uniqueness of a trivial solution to the following integral equation,
\beq\label{eq:appinvt01}
\left(\varsigma_l I -(\Kcal_{D_l}^0)^*+\Pcal_{D_l, \Sigma_c}\Lcal_{\Sigma_c, D_l}^0\right)[\Delta_{\p D_l}\varphi_l]=0.
\eeq
Note that there exists only a trial solution to the following system (see Appendix A in \cite{DLL:18})
\beq\label{eq:appinvt02}
\left\{
\begin{array}{ll}
\displaystyle{\nabla\times\bH=0, \quad \nabla\cdot\bH=0}, & \mbox{in} \,\ (\RR^3\setminus\overline{D_l \cup \Sigma_c})\cup D_l,\medskip\\
\displaystyle{\nu_l\times\bH|_+=\nu_l\times\bH|_-,}  & \mbox{on} \, \ \p D_l,\medskip \\
\displaystyle{\mu_0\nu_l\cdot\bH|_+=\mu_l\nu_l\cdot\bH|_-, }& \mbox{on} \,\ \p D_l,\medskip \\
\displaystyle{\nu\times\bH|_+=0, \quad \int_{\p \Sigma_c}\nu\cdot\bH|_+ =0,} &\mbox{on} \,\ \p \Sigma_c,\medskip \\
\displaystyle{\bH(\Bx)=\Ocal(\|\Bx\|^{-2}),  \quad \|\Bx\|\rightarrow \infty.}
\end{array}
\right.
\eeq
On the other hand, one can verify that
\beq\label{eq:appinvt03}
\bH=\Big(-\nabla\Scal_{D_l}^{0}+\nabla\times\Acal_{\Sigma_c}^0\left(-\frac{I}{2}+\Mcal_{\Sigma_c}^{0}\right)^{-1}\nu\times\nabla\Scal_{D_l}^{0}\Big)[\Delta_{\p D_l}\varphi_l]
\eeq
is also the solution to \eqnref{eq:appinvt02}. Therefore, one has
\beq\label{eq:appinvt04}
\Big(-\nabla\Scal_{D_l}^{0}+\nabla\times\Acal_{\Sigma_c}^0\left(-\frac{I}{2}+\Mcal_{\Sigma_c}^{0}\right)^{-1}\nu\times\nabla\Scal_{D_l}^{0}\Big)[\Delta_{\p D_l}\varphi_l]=0 \quad \mbox{in} \, \
\RR^3\setminus\overline{\Sigma_c}.
\eeq
Hence $\Delta_{\p D_l}\varphi_l=0$, which proves the unique trial solution to \eqnref{eq:appinvt01}. Note that $D_l$, $l=1, 2, \ldots, l_0$ are small inclusions which are disjoint from each, one can prove the unique solvability of \eqnref{eq:asymtH0103} (see Appendix B in \cite{DLL:18}).

\section{Harmonic representation of vectorial spherical polynomials}\label{app:03}
In this appendix, we shall represent $\mathbf{A}_n^m(\hat\Bx)\xi$, where $\mathbf{A}_n^m(\hat\Bx)$ is defined in \eqnref{eq:leasm04} and $\xi \in \RR^3$, in terms of vectorial spherical harmonic functions. Recall that the vectorial spherical harmonic functions of degree $n$ are composed of $\mathbf{M}_{n+1}^m(\hat\Bx)$, $\mathbf{Q}_{n-1}^m(\hat\Bx)$ and $\mathbf{T}_n^m(\hat\Bx)$, which are defined in \eqnref{eq:leax04} and \eqnref{eq:leftasmp01}.
From \eqnref{eq:leasm04} one has
\beq\label{eq:app03def02}
\begin{split}
\mathbf{A}_n^m(\hat\Bx)\xi=&(n+1)\hat\Bx\nabla_SY_n^m(\hat\Bx)^T\xi+(n+1)Y_n^m(\hat\Bx)\xi-(n+1)(n+3)Y_n^m(\hat\Bx)\hat\Bx\hat\Bx^T\xi \\
&-\nabla_S(\nabla_SY_n^m(\hat\Bx)^T\xi)+(n+2)\nabla_SY_n^m(\hat\Bx) \hat\Bx^T\xi,
\end{split}
\eeq
By vector calculus identity and integration by parts, there holds
\beq\label{eq:app03def03}
\begin{split}
\int_{\mathbb{S}} \overline{\mathbf{T}_n^m(\hat\Bx)}\cdot (\mathbf{A}_n^m(\hat\Bx)\xi) ds&=(n+1)\int_{\mathbb{S}}Y_n^m(\hat\Bx)\overline{\mathbf{T}_n^m(\hat\Bx)}\cdot\xi ds\\
&=(n+1)\int_{\mathbb{S}}\|\Bx\|^nY_n^m(\hat\Bx)\hat\Bx\cdot(\xi\times\nabla (\|\Bx\|^n \overline{Y_n^m(\hat\Bx)}))ds\\
&=(n+1)\int_{B_1}\nabla\cdot\big(\|\Bx\|^nY_n^m(\hat\Bx)\xi\times\nabla (\|\Bx\|^n \overline{Y_n^m(\hat\Bx)})\big)d\Bx\\
&=(n+1)\int_{B_1}\nabla (\|\Bx\|^n \overline{Y_n^m(\hat\Bx)})\cdot\big(\nabla(\|\Bx\|^nY_n^m(\hat\Bx))\times\xi\big)d\Bx=0,
\end{split}
\eeq
Hence, $\mathbf{A}_n^m(\hat\Bx)\xi$ is a linear combination of the spherical harmonics $\{\BN_{n+1}^m(\hat\Bx)\}$ and $\{\mathbf{Q}_{n-1}^m(\hat\Bx)\}$.
By straightforward computations, one can obtain that
\beq\label{eq:app03def04}
\begin{split}
&\int_{\mathbb{S}} \overline{\mathbf{N}_{n'+1}^{m'}(\hat\Bx)}\cdot (\hat\Bx\nabla_SY_n^m(\hat\Bx)^T\xi) ds=\frac{n'+1}{n'} \int_{\mathbb{S}} \overline{\mathbf{Q}_{n'-1}^{m'}(\hat\Bx)}\cdot (\hat\Bx\nabla_SY_n^m(\hat\Bx)^T\xi) ds\\
=&(n'+1)\int_{\mathbb{S}}\overline{Y_{n'}^{m'}(\hat\Bx)}\nabla_SY_n^m(\hat\Bx)^T\xi ds.
\end{split}
\eeq
Similarly, one can show that
\beq\label{eq:app03def05}
\begin{split}
&\int_{\mathbb{S}} \overline{\mathbf{N}_{n'+1}^{m'}(\hat\Bx)}\cdot (Y_n^m(\hat\Bx)\xi) ds\\
=&-\int_{\mathbb{S}}Y_n^m(\hat\Bx)\nabla_S\overline{Y_{n'}^{m'}(\hat\Bx)}^T\xi ds+(n'+1)\int_{\mathbb{S}}\overline{Y_{n'}^{m'}(\hat\Bx)}Y_n^m(\hat\Bx)\hat\Bx^T\xi ds,
\end{split}
\eeq
and
\beq\label{eq:app03def051}
\begin{split}
&\int_{\mathbb{S}} \overline{\mathbf{Q}_{n'-1}^{m'}(\hat\Bx)}\cdot (Y_n^m(\hat\Bx)\xi) ds\\
=&\int_{\mathbb{S}}Y_n^m(\hat\Bx)\nabla_S\overline{Y_{n'}^{m'}(\hat\Bx)}^T\xi ds+n'\int_{\mathbb{S}}\overline{Y_{n'}^{m'}(\hat\Bx)}Y_n^m(\hat\Bx)\hat\Bx^T\xi ds,
\end{split}
\eeq
and
\beq\label{eq:app03def06}
\begin{split}
&\int_{\mathbb{S}} \overline{\mathbf{N}_{n'+1}^{m'}(\hat\Bx)}\cdot (Y_n^m(\hat\Bx)\hat\Bx\hat\Bx^T\xi) ds=\frac{n'+1}{n'} \int_{\mathbb{S}} \overline{\mathbf{Q}_{n'-1}^{m'}(\hat\Bx)}\cdot (Y_n^m(\hat\Bx)\hat\Bx\hat\Bx^T\xi) ds\\
=&(n'+1)\int_{\mathbb{S}}\overline{Y_{n'}^{m'}(\hat\Bx)}Y_n^m(\hat\Bx)\hat\Bx^T\xi ds.
\end{split}
\eeq
Next, for the last two terms in \eqnref{eq:app03def02}, using integration by parts, we have
\beq\label{eq:app03def07}
\begin{split}
&\int_{\mathbb{S}} \overline{\mathbf{N}_{n'+1}^{m'}(\hat\Bx)}\cdot \nabla_S(\nabla_SY_n^m(\hat\Bx)^T\xi) ds=-\int_{\mathbb{S}} \overline{\mathbf{Q}_{n'-1}^{m'}(\hat\Bx)}\cdot \nabla_S(\nabla_SY_n^m(\hat\Bx)^T\xi) ds\\
=&-\int_{\mathbb{S}} \nabla_S\overline{Y_{n'}^{m'}(\hat\Bx)}\cdot \nabla_S(\nabla_SY_n^m(\hat\Bx)^T\xi)ds\\
=&\int_{\mathbb{S}} \Delta_S\overline{Y_{n'}^{m'}(\hat\Bx)}\nabla_SY_n^m(\hat\Bx)^T\xi ds=-n'(n'+1)\int_{\mathbb{S}}\overline{Y_{n'}^{m'}(\hat\Bx)}\nabla_SY_n^m(\hat\Bx)^T\xi ds.
\end{split}
\eeq
Furthermore, there holds
\beq\label{eq:app03def08}
\begin{split}
&\int_{\mathbb{S}} \overline{\mathbf{N}_{n'+1}^{m'}(\hat\Bx)}\cdot (\nabla_SY_n^m(\hat\Bx) \hat\Bx^T\xi) ds=-\int_{\mathbb{S}} \overline{\mathbf{Q}_{n'-1}^{m'}(\hat\Bx)}\cdot (\nabla_SY_n^m(\hat\Bx) \hat\Bx^T\xi) ds\\
=&-\int_{\mathbb{S}} \nabla_S\overline{Y_{n'}^{m'}(\hat\Bx)}\cdot (\nabla_SY_n^m(\hat\Bx) \hat\Bx^T\xi)ds\\
=&-\int_{\mathbb{S}} \nabla_S\overline{Y_{n'}^{m'}(\hat\Bx)}\cdot \big(\nabla (Y_n^m(\hat\Bx) \hat\Bx^T\xi)-Y_n^m(\hat\Bx)\xi\big)ds\\
=&-n'(n'+1)\int_{\mathbb{S}}\overline{Y_{n'}^{m'}(\hat\Bx)}Y_n^m(\hat\Bx) \hat\Bx^T\xi ds+\int_{\mathbb{S}} Y_n^m(\hat\Bx)\nabla_S\overline{Y_{n'}^{m'}(\hat\Bx)}^T\xi ds.
\end{split}
\eeq
In the following, we define
\beq\label{eq:app03def09}
\mathbf{a}_{n',n}^{m',m}:=\int_{\mathbb{S}}\overline{Y_{n'}^{m'}(\hat\Bx)}\nabla_SY_n^m(\hat\Bx) ds,\quad \mathbf{b}_{n',n}^{m',m}:=\int_{\mathbb{S}}\overline{Y_{n'}^{m'}(\hat\Bx)}Y_n^m(\hat\Bx)\hat\Bx ds.
\eeq
By combining \eqnref{eq:app03def04}-\eqnref{eq:app03def08} and using \eqnref{eq:app03def02}, one thus has
\beq\label{eq:app03def10}
\begin{split}
&\int_{\mathbb{S}} \overline{\mathbf{N}_{n'+1}^{m'}(\hat\Bx)}\cdot (\mathbf{A}_n^m(\hat\Bx)\xi) ds\\
=&\left((n'+1)(n'+n+1)\mathbf{a}_{n',n}^{m',m}-(n'+1)(n'+n+1)(n+2)\mathbf{b}_{n',n}^{m',m}+\overline{\mathbf{a}_{n,n'}^{m,m'}}\right)^T\xi,
\end{split}
\eeq
and
\beq\label{eq:app03def11}
\begin{split}
&\int_{\mathbb{S}} \overline{\mathbf{Q}_{n'+1}^{m'}(\hat\Bx)}\cdot (\mathbf{A}_n^m(\hat\Bx)\xi) ds=\left(n'(n-n')\mathbf{a}_{n',n}^{m',m}-n'(n-n')(n+2)\mathbf{b}_{n',n}^{m',m}-\overline{\mathbf{a}_{n,n'}^{m,m'}}\right)^T\xi.
\end{split}
\eeq
By using the following elementary result (cf. \cite{Ned})
\beq\label{eq:app03def12}
\mathbf{a}_{n',n}^{m',m}=\mathbf{b}_{n',n}^{m',m}=0, \quad \mbox{for any} \, n'\neq n-1, \, n+1, \mbox{and} \, m'\neq m-1, m, m+1,
\eeq
one finally obtains
\beq\label{eq:app03def13}
\begin{split}
\mathbf{A}_n^m(\hat\Bx)\xi=&\sum_{m'=m-1}^{m+1}\Big((\mathbf{c}_{n-1,n}^{m',m})^T\xi\mathbf{N}_{n}^{m'}(\hat\Bx)+(\mathbf{c}_{n+1,n}^{m',m})^T\xi\mathbf{N}_{n+2}^{m'}(\hat\Bx)\\
&+(\mathbf{d}_{n-1,n}^{m',m})^T\xi\mathbf{Q}_{n-2}^{m'}(\hat\Bx)+(\mathbf{d}_{n+1,n}^{m',m})^T\xi\mathbf{Q}_{n}^{m'}(\hat\Bx)\Big),
\end{split}
\eeq
where
\beq\label{eq:app03def14}
\mathbf{c}_{n',n}^{m',m}:=\frac{(n'+1)(n'+n+1)\mathbf{a}_{n',n}^{m',m}-(n'+1)(n'+n+1)(n+2)\mathbf{b}_{n',n}^{m',m}+\overline{\mathbf{a}_{n,n'}^{m,m'}}}{(n'+1)(2n'+1)},
\eeq
and
\beq\label{eq:app03def15}
\mathbf{d}_{n',n}^{m',m}:=\frac{n'(n-n')\mathbf{a}_{n',n}^{m',m}-n'(n-n')(n+2)\mathbf{b}_{n',n}^{m',m}-\overline{\mathbf{a}_{n,n'}^{m,m'}}}{n'(2n'+1)}.
\eeq

\end{document}